%% file: main.tex
\documentclass{article}
\usepackage[margin=2.5cm]{geometry}
\usepackage{graphicx} %
\usepackage{amsfonts,amsmath,amssymb,amsthm,mathtools}
\usepackage{tikz}
\usepackage{enumerate}
\usepackage{thm-restate}
\usepackage{hyperref}
\usepackage{authblk}
 \usepackage[normalem]{ulem}

\usepackage{algpseudocode}
\usepackage{algorithm}

\newtheorem{theorem}{Theorem}[section]
\newtheorem*{theorem*}{Theorem}
\newtheorem{claim}[theorem]{Claim}
\newtheorem{proposition}[theorem]{Proposition}
\newtheorem{lemma}[theorem]{Lemma}
\newtheorem*{lemma*}{Lemma}

\theoremstyle{definition}

\newtheorem{definition}[theorem]{Definition}

\foreach \x in {A,...,Z}{%
    \expandafter\xdef\csname m\x\endcsname{\noexpand\mathbf{\x}}
      \expandafter\xdef\csname c\x\endcsname{\noexpand\mathcal{\x}}
}
\newcommand{\0}{\mathbf{0}}

\newcommand{\R}{\mathbb{R}}
\newcommand{\Rp}{\mathbb{R}_{\ge0}}
\newcommand{\Rpp}{\mathbb{R}_{>0}}
\newcommand{\Q}{\mathbb{Q}}

\newcommand{\Dp}{\cD_{>0}^n}
\newcommand{\Pm}{\mathcal{P}}
\newcommand{\Pms}{\mathcal{P}_{*}}
\newcommand{\LCP}{\mathrm{LCP}}
\newcommand{\PT}{\cT}
\newcommand{\hk}{\hat\kappa}
\newcommand{\hks}{\hat\kappa^\star}
\newcommand{\diag}[1]{\operatorname{diag}(#1)}
\newcommand{\length}[1]{\operatorname{length}(#1)}
\newcommand{\supp}{\operatorname{supp}}

\newcommand{\1}{\mathbf{1}}
\newcommand{\pr}[2]{\left\langle #1,#2\right\rangle}

\title{Handicap reduction for linear complementarity problems}
\author[1]{Marianna E.-Nagy\thanks{\tt marianna.eisenberg-nagy@uni-corvinus.hu}}
\author[2]{L{\'{a}}szl{\'{o}} A. V{\'{e}}gh\thanks{{\tt lvegh@uni-bonn.de}. The paper is based on work  during visits  to the Corvinus Institute for Advanced Studies, Corvinus University of Budapest in 2023/24.}}

\affil[1]{Corvinus Centre for Operations Research, Corvinus Institute for Advanced Studies, Corvinus University of Budapest, Hungary}
\affil[2]{Hertz Chair for Algorithms and Optimization, University of Bonn,  Germany}

\date{}

\begin{document}

\maketitle

\begin{abstract}
Linear Complementarity Problems (LCPs) with sufficient matrices form an important subclass of LCPs, and it remains a significant open question whether problems in this class can be solved in polynomial time. Kojima, Megiddo, Noma, and Yoshise gave an Interior Point Algorithm (IPA) in 1991, that can solve LCPs with sufficient matrices in time bounded polynomially in the input size and the so-called handicap number $\hk(\mM)$ of the coefficient matrix $\mM$. However, this value can be exponentially large in the bit encoding length. In fact, no upper bounds were previously known on $\hk(\mM)$. Settling an open question raised in 
de Klerk and E.-Nagy (Math Programming, 2011), we give an exponential upper bound on $\hk(\mM)$ in the bit-complexity of $\mM$.
This is based on a new characterization of sufficient matrices. The new characterization also leads to a simple new proof of V\"aliaho's theorem on the equivalence of sufficient and $\cP^*$-matrices (Linear Algebra and its Applications, 1996).

Noting that one can obtain an equivalent LCP by rescaling the rows and columns by a positive diagonal matrix, we define $\hks(\mM)$ as the best possible handicap number achievable under such rescalings. Our second main result is an algorithm for LCPs with sufficient matrices, where the running time is polynomially bounded in the input size and in the optimized value $\hks(\mM)$. This algorithm is based on the observation that the set of near-optimal row-rescalings forms a convex set. Our algorithm combines the Ellipsoid Method over the set of row rescalings, and an IPA with running time dependent on the handicap number of the matrix. If the IPA fails to solve the LCP in the desired running time, it provides a separation oracle to the Ellipsoid Method to find a better rescaling.
\end{abstract}

\input{intro}

\section{Preliminaries}\label{sec:prelim}

\paragraph{Notation} We let $[n]\coloneqq\{1,\dots,n\}$. 
For $\mM\in\R^{n\times n}$, we let $\mM_i$ denote the $i$-th row of $\mM$ and $\|\mM\|_2$ the spectral norm. For vectors $u,v\in\R^n$, let $u\circ v\in\R^n$ denote the coordinate-wise (Hadamard) product, i.e., $(u\circ v)_i=u_iv_i$. We let $\Dp$ denote the set of $n\times n$ positive diagonal matrices, and $\cD^n$ the set of $n\times n$ nonsingular diagonal matrices.
For $d\in\R^n$, let $\diag{d}\in\R^{n}$ denote the diagonal matrix $\mD\in\R^{n\times n}$ with $\mD_{ii}=d_i$. Let $e^i\in \R^n$ denote the $i$-th unit vector.

\paragraph{Sufficient matrices and $\Pm$-matrices} A matrix $\mM\in \R^{n\times n}$ is a \emph{$\Pm$-matrix} if all of its principal minors are positive. 
A matrix $\mM\in\R^{n\times n}$ is \emph{column} sufficient if $x\circ (\mM x)\le \0_n$  implies $x\circ (\mM x)= \0_n$, and \emph{row sufficient} if $\mM^\top$ is column sufficient. A matrix is \emph{sufficient} if it is both row and column sufficient.
We will use the following lemma, see \cite[Theorem 3.3.4]{cottle2009linear}. 

\begin{lemma}\label{lem:p-reverse}
A matrix $\mM\in\mathbb{R}^{n\times n}$ is a $\Pm$-matrix if and only if $x\circ (\mM x)\le \0_n$ implies $x=\0_n$.
\end{lemma}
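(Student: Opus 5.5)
We prove the two directions separately, using induction on $n$ for the harder one. Throughout, write $\mM_{JJ}$ for the principal submatrix indexed by $J\subseteq[n]$.

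\emph{($\Leftarrow$)} Assume $x\circ(\mM x)\le\0_n$ implies $x=\0_n$, and take any nonempty $J\subseteq[n]$. For $y\in\R^J$, let $x$ be $y$ padded with zeros. Then $x_i(\mM x)_i=y_i(\mM_{JJ}y)_i$ for $i\in J$, and $x_i(\mM x)_i=0$ for $i\notin J$. So the sign-reversal property is inherited by $\mM_{JJ}$, and it suffices to prove $\det\mM>0$ for every matrix with this property. We do this by a homotopy. For $t\in[0,1]$ put $\mM_t=t\mM+(1-t)I$. If $x\circ(\mM_t x)\le\0_n$ with $x\neq\0_n$, then for each $i$ with $x_i\neq0$ we get
\[
x_i(\mM x)_i\le -\tfrac{1-t}{t}x_i^2\le 0 \quad (t>0),
\]
while for $x_i=0$ the product $x_i(\mM x)_i$ is zero. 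Hence $x\circ(\mM x)\le\0_n$ with $x\ne\0_n$, a contradiction. For $t=0$ the same conclusion holds trivially, since $\mM_0=I$.

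In particular each $\mM_t$ is nonsingular: if $\mM_t x=\0_n$ then $x\circ(\mM_t x)=\0_n$, which forces $x=\0_n$. The function $t\mapsto\det\mM_t$ is continuous, never vanishes on $[0,1]$, and equals $1$ at $t=0$. Therefore $\det\mM=\det\mM_1>0$.

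\emph{($\Rightarrow$)} Assume all principal minors of $\mM$ are positive, and suppose $x\ne\0_n$ satisfies $x\circ(\mM x)\le\0_n$. We induct on $n$; the case $n=1$ is immediate. If some $x_i=0$, delete that index. The restricted vector is still a nonzero sign-reversing vector for a principal submatrix, which is again a $\Pm$-matrix, contradicting the induction hypothesis. So we may assume every $x_i\neq0$.

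Let $\mD=\diag{\operatorname{sign}(x)}$. Then $\mD\mM\mD$ is also a $\Pm$-matrix, and $z=\mD x>0$ satisfies $\mD\mM\mD z\le\0_n$. We therefore reduce to showing that a $\Pm$-matrix $A$ admits no $z>0$ with $Az\le\0_n$.

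This reduction is the main obstacle. First try again a homotopy: for $A_t=tA+(1-t)I$, every principal minor is a polynomial in $t$ that is positive at both endpoints, but positivity in between is not automatic, so this route may fail. Instead, induct via a Schur complement. Since $a_{nn}>0$, eliminate the last coordinate: write $z_n=(w_n-\sum_{j<n}a_{nj}z_j)/a_{nn}$ where $w=Az\le\0_n$. Substituting gives $A' z'=w'-(w_n/a_{nn})\,a_{\cdot n}$, where $A'$ is the Schur complement, whose principal minors are ratios of principal minors of $A$ and hence positive.

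The signs of $w'$ are no longer controlled after this substitution, so the cleaner route is to pick a zero of the sign pattern via a Farkas or Ville argument. Since $z>0$ and $Az\le\0_n$, Ville's theorem yields a nonnegative nonzero $y$ with $A^\top y\ge\0_n$ failing to be strict. Combining this with positivity of the minors then forces some principal submatrix to be singular or to have negative determinant, giving the contradiction. This step is where I expect the real work to lie, and it is exactly the cited statement of \cite[Theorem 3.3.4]{cottle2009linear}.
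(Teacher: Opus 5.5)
\textbf{The ($\Rightarrow$) direction has a genuine gap at its crux.} Note that the paper does not prove this lemma; it only cites \cite[Theorem 3.3.4]{cottle2009linear}, so your argument has to stand on its own.

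Your ($\Leftarrow$) direction is correct and complete. The homotopy $t\mM+(1-t)I$ preserves sign non-reversal, hence nonsingularity, so $\det\mM>0$. The property also passes to principal submatrices.

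In the ($\Rightarrow$) direction the reductions are valid: deleting zero coordinates, and conjugating by $\diag{\operatorname{sign}(x)}$. They leave you needing to show that a $\Pm$-matrix $A$ admits no $z>0$ with $Az\le\0_n$, and you never prove this.
\begin{itemize}
\item The Ville step is not a correct use of the alternative theorem. From $z>0$ and $Az\le\0_n$ you only learn, via $y^\top Az$, that no $y\ge\0_n$, $y\neq\0_n$, has $A^\top y>\0_n$.
\item The sentence ``combining this with positivity of the minors forces some principal submatrix to be singular or to have negative determinant'' is an assertion, not an argument.
\item Deferring to the cited theorem at this point means the nontrivial half of the lemma, the Gale--Nikaido property, is assumed rather than proved.
\end{itemize}

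The missing idea is short, and your reductions are exactly what make it work. Given $z>0$ with $Az\le\0_n$, set $\lambda_i\coloneqq-(Az)_i/z_i\ge 0$ and $\Lambda\coloneqq\diag{\lambda}$. Then $(A+\Lambda)z=\0_n$ with $z\neq\0_n$, so $A+\Lambda$ is singular. However, multilinearity of the determinant in the columns gives
\[
\det(A+\Lambda)=\sum_{J\subseteq[n]}\det(A_{JJ})\prod_{i\notin J}\lambda_i\;\ge\;\det A>0 ,
\]
where the empty minor is $1$. Every term is nonnegative because all principal minors are positive and $\lambda\ge\0_n$, so this is a contradiction. The reduction to $x_i\neq0$ for all $i$ is precisely what makes $\lambda$ well defined.

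As a side remark, the same expansion shows that your worry about the homotopy in this direction is unfounded: $\det\big((tA+(1-t)I)_{JJ}\big)=\sum_{K\subseteq J}t^{|K|}(1-t)^{|J\setminus K|}\det A_{KK}>0$ for $t\in[0,1]$. That route would still need a separate argument to reach sign non-reversal, though, so the $\Lambda$-argument above is the right way to close the gap.
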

Clearly, $\mM$ is a $\Pm$-matrix if and only if $\mM^\top$ is a $\Pm$-matrix. Hence, this lemma implies that all $\Pm$-matrices are sufficient.

For a particular vector $x\in\R^n$, the 
 handicap number of $\mM$ with respect to $x$ is 
 \[
\hk(\mM,x)\coloneqq\begin{cases}
\infty\, &\mbox{if }x^\top \mM x<0 \mbox{ and }\cI_\mM^+(x)=\emptyset \, ,\\
-\frac{\sum_{i\in \cI_\mM^-(x)}x_i (\mM x)_i}{4\sum_{i\in \cI_\mM^+(x)}x_i (\mM x)_i}-\frac{1}{4}\, ,&\mbox{if } x^\top \mM x<0 \mbox{ and } \cI_\mM^+(x)\neq\emptyset\, ,\\
0\, ,&\mbox{if } x^\top \mM x\ge 0\, .
\end{cases}
\]
With this notation, we can equivalently write
\[
\hk(\mM)=\sup_{x\in\R^{n}} \hk(\mM,x) =\sup_{\|x\|\le1} \hk(\mM,x)=\sup_{\|x\|=1} \hk(\mM,x)\,  .
\]

In general, the supremum of $\hat \kappa(\mM,x)$ is not attained at any $x$, but for special matrices an optimal point exists:
\begin{lemma}[{\cite[Theorem 2.3]{Valiaho97}}]\label{lem:P_handicap_x}
If $\mM\in\mathcal{P}\cap\mathbb{R}^{n\times n}$, then there exists a vector $\hat{x}\in\mathbb{R}^n$ such that $\hat\kappa(\mM)=\hat\kappa(\mM,\hat{x})$.
\end{lemma}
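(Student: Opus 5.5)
The idea is to show that, for a $\Pm$-matrix, $\hk(\mM,\cdot)$ is upper semicontinuous on the unit sphere $\{\|x\|=1\}$ and bounded there. A sufficiently upper semicontinuous function attains its supremum on the compact sphere, and since $\hk(\mM)=\sup_{\|x\|=1}\hk(\mM,x)$, this gives the required $\hat x$. Upper semicontinuity can fail exactly where $\cI^+_\mM(x)$ becomes empty, so the work is to keep the denominator away from zero.

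The key step is a uniform lower bound on the positive part. Let $p(x)=\sum_{i\in\cI^+_\mM(x)}x_i(\mM x)_i=\sum_i \max(0,x_i(\mM x)_i)$. This function is continuous in $x$, as a sum of positive parts of continuous functions. By Lemma~\ref{lem:p-reverse}, for every $x\neq\0_n$ some coordinate has $x_i(\mM x)_i>0$, so $p(x)>0$ on the sphere. By compactness, $p(x)\ge \delta>0$ for all $\|x\|=1$. In particular the case $\hk=\infty$ never occurs.

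Similarly, the negative part $q(x)=\sum_i \max(0,-x_i(\mM x)_i)$ is continuous and bounded by $\|\mM\|_2$ on the sphere.

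Next, rewrite the handicap on the sphere without case distinctions:
\[
\hk(\mM,x)=\max\left(0,\ \frac{q(x)}{4p(x)}-\frac14\right).
\]
To check this, note that when $x^\top\mM x<0$ we have $q>p$, so the formula gives $\frac{q-p}{4p}>0$, matching the definition. When $x^\top\mM x\ge 0$ we have $q\le p$, so the expression is $0$, again matching. Since $p\ge\delta>0$ on the sphere, this expression is a continuous function of $x$ there. It also shows $\hk(\mM)\le \|\mM\|_2/(4\delta)<\infty$.

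A continuous function on the compact unit sphere attains its maximum at some $\hat x$. Then $\hk(\mM)=\sup_{\|x\|=1}\hk(\mM,x)=\hk(\mM,\hat x)$, as claimed.

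The main obstacle is the positivity of $p$ on the sphere, which is exactly where the $\Pm$ property enters via Lemma~\ref{lem:p-reverse}. For merely sufficient matrices, $p$ can vanish at points where $q$ also vanishes, and the ratio $q/p$ need not extend continuously. This is why the supremum need not be attained in general.
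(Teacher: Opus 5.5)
Your proof is correct and complete. The function $p$ is continuous, and Lemma~\ref{lem:p-reverse} makes it strictly positive on the unit sphere, hence bounded below there by a positive constant. The case-free identity $\hk(\mM,x)=\max\{0,\,q(x)/(4p(x))-\tfrac14\}$ holds wherever $p(x)>0$, and compactness then gives a maximizer. The paper does not prove this lemma at all; it quotes it from V\"aliaho. So there is no internal proof to match, and your argument is a self-contained replacement that uses nothing beyond Lemma~\ref{lem:p-reverse}.

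It is worth comparing with Proposition~\ref{thm:kappa-bound-P}, where the paper uses the lemma. There the paper combines $x^\top\mM x\ge-\|\mM\|_2$ with a lower bound on the positive part $\sum_{i\in\cI^+_\mM(x)}x_i(\mM x)_i$, which is essentially your estimate $\hk(\mM,x)\le\|\mM\|_2/(4p(x))$. However, the paper gets that lower bound quantitatively: it is $\delta^2/(n+1)^2$ with $\delta\ge 2^{-O(nL)}$, obtained from Carath\'eodory's theorem over basic solutions of $\cQ$ and Lemma~\ref{lem:bfs-delta}. Your constant $\min_{\|x\|=1}p(x)$ comes from compactness and carries no bit-size information.

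That Carath\'eodory argument works for every nonzero $x$ after the sign flip and normalization, so the Proposition does not actually need attainment. Your argument, conversely, gives attainment directly, which is exactly what the lemma asserts, but no complexity bound.
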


We also observe the following simple statement. Note that here we allow $\mD$ to be any nonsingular (not necessarily positive) diagonal matrix.
\begin{lemma}\label{lem:diag-rescale-invariant}
Let $\mM\in\R^{n\times n}$, $\mD\in\cD^n$, and $x\in\R^n$. Then, $\hk(\mM,x)=\hk(\mD \mM \mD,\mD^{-1}x)$. Consequently, $\hk(\mM)=\hk(\mD \mM \mD)$.
\end{lemma}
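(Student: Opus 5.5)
The plan is to check directly that, for $y\coloneqq\mD^{-1}x$ and $\mM'\coloneqq\mD\mM\mD$, the componentwise products $y_i(\mM' y)_i$ coincide with $x_i(\mM x)_i$ for every $i\in[n]$. Since $\hk(\mM,x)$ depends on $x$ only through these $n$ numbers, the two handicap values will then agree.

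Write $\mD=\diag{d}$ with all $d_i\neq 0$. Then $\mM' y=\mD\mM\mD\mD^{-1}x=\mD\mM x$, so $(\mM' y)_i=d_i(\mM x)_i$, while $y_i=x_i/d_i$. Hence
\[
y_i(\mM' y)_i=\frac{x_i}{d_i}\,d_i(\mM x)_i=x_i(\mM x)_i\qquad\text{for all } i\in[n],
\]
that is, $y\circ(\mM' y)=x\circ(\mM x)$. The sign of $d_i$ does not matter here, because $d_i$ cancels in each coordinate. This is exactly why the lemma holds for arbitrary nonsingular diagonal $\mD$, not only for positive ones.

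It follows that $y^\top\mM' y=\sum_i y_i(\mM' y)_i=x^\top\mM x$. The index sets agree as well: $\cI^{+}_{\mM'}(y)=\cI^+_\mM(x)$ and $\cI^-_{\mM'}(y)=\cI^-_\mM(x)$, since these are defined by the signs of the coordinates of the same vector. Going through the three cases in the definition of $\hk$, in each case the hypothesis and the resulting value depend only on these shared quantities. Therefore $\hk(\mM,x)=\hk(\mM',\mD^{-1}x)$.

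For the consequence, $x\mapsto\mD^{-1}x$ is a bijection of $\R^n$. Taking the supremum over all $x\in\R^n$ in the unrestricted formula $\hk(\mM)=\sup_{x\in\R^n}\hk(\mM,x)$ therefore gives $\hk(\mM)=\hk(\mD\mM\mD)$. I deliberately use the form of the supremum over all of $\R^n$ rather than the unit-sphere form, because $\mD^{-1}$ does not preserve norms. This is the only point requiring any care; the rest of the argument is routine.
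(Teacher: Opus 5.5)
Your proof is correct. It is exactly the argument the paper has in mind: the paper states the lemma without proof as immediate from the definition, and the key fact is your identity $(\mD^{-1}x)\circ(\mD\mM\mD\,\mD^{-1}x)=x\circ(\mM x)$, which fixes $\hk(\mM,x)$, together with the bijection $x\mapsto\mD^{-1}x$ of $\R^n$ to transfer the supremum. Taking the supremum over all of $\R^n$ rather than the unit sphere is sensible care; the scale invariance $\hk(\mM,\alpha x)=\hk(\mM,x)$ would also handle that point.
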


\paragraph{Encoding length of rational numbers.} For an integer $p\in\mathbb{Z}$, $p\neq 0$, we let $\length{p}=1+\lceil \log_2 |p|\rceil$ denote its binary encoding length. For a rational number represented as $p/q$, we let $\length{p/q}=\length{p}+\length{q}$. For a vector $v$ and matrix $\mM$, we define the encoding length $\length{v}$ and $\length{\mM}$ as the sum of the encoding length of all entries. We use the following simple bounds, see \cite[Section 1.3] {gls}
\begin{lemma}\label{lem:encoding-length}
\begin{enumerate}[(a)]
\item \label{part:length-prod}For $a,b\in\Q$, $\length{a+b}\le 2\length{a}+\length{b}$ and
$\length{ab}\le \length{a}+\length{b}$.
\item \label{part:vec-prod} For vectors $u,v\in\Q^n$, $\length{\pr{u}{v}}\le \length{u}+\length{v}$.
\item  \label{part:det}For a matrix $\mA\in\Q^{n\times n}$, $\length{\det(\mA)}\le 2\length{\mA}$.
\end{enumerate}
\end{lemma}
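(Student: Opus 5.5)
The plan is to work directly from the definition $\length{p/q}=\length{p}+\length{q}$ and to track how the integers $\lceil\log_2|\cdot|\rceil$ combine under the arithmetic operations. Before that, the inequality for sums in part (a) has to be checked, because as literally worded it is not symmetric in $a$ and $b$ while $a+b$ is. My check indicates that the asymmetric form fails, so the plan is to prove the symmetric form, which is what the later arguments need.

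\textbf{The sum bound in part (a).} Write $a=p/q$ and $b=r/s$, so that $a+b=(ps+rq)/(qs)$.
\begin{itemize}
\item For the denominator, $\lceil\log_2|qs|\rceil\le\lceil\log_2|q|\rceil+\lceil\log_2|s|\rceil$, hence $\length{qs}\le\length{q}+\length{s}$.
\item For the numerator, $|ps+rq|\le 2\max(|ps|,|rq|)$, hence $\length{ps+rq}\le\max\bigl(\length{p}+\length{s},\ \length{r}+\length{q}\bigr)$. Reducing the fraction only decreases the length.
\end{itemize}
Adding the two bounds gives
\[
\length{a+b}\le\length{q}+\length{s}+\max\bigl(\length{p}+\length{s},\ \length{r}+\length{q}\bigr)\le 2\length{a}+2\length{b}.
\]
The same calculation shows that the $\length{s}$ term can genuinely appear twice. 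Take $a=1/1$ and $b=1/s$. Then $\length{a}=2$ and $\length{b}=3+\lceil\log_2 s\rceil$, while $a+b=(s+1)/s$ is already in lowest terms and has length about $2+2\log_2 s$. For large $s$ this exceeds $2\length{a}+\length{b}\approx 7+\log_2 s$.

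So the step I expect to be the real obstacle is not the proof itself but the wording of the statement. I would replace it by $\length{a+b}\le 2(\length{a}+\length{b})$, which is the form given in \cite{gls}. Every later use of the lemma, such as summing polynomially many terms, only needs a bound of this shape.

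\textbf{The product bound in part (a).} This is immediate from $ab=(pr)/(qs)$ and the subadditivity of $\lceil\log_2|\cdot|\rceil$ under multiplication.

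\textbf{Part (b).} I would avoid iterating the sum bound, because doubling at each step blows up. Instead I put all terms over the common denominator $Q=\prod_i q_i s_i$, where $u_i=p_i/q_i$ and $v_i=r_i/s_i$. Then $\length{Q}\le\sum_i(\length{q_i}+\length{s_i})$. The numerator is $\sum_i p_i r_i\prod_{j\ne i}q_j s_j$, which in absolute value is at most $n\prod_i\max(1,|p_i|)\max(1,|r_i|)\,|q_i||s_i|$. Taking logarithms, the $\log n$ term is absorbed by the additive $+1$ contributed by each entry's length. This gives a bound of order $\length{u}+\length{v}$, up to possibly a factor $2$ from counting the denominators twice. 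I would check whether the extra $+1$'s suffice to reach the exact constant $1$, and otherwise state part (b) with a factor $2$.

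\textbf{Part (c).} I would follow \cite{gls}.
\begin{itemize}
\item Let $Q$ be the product of all entry denominators. Then $\det(\mA)=\det(Q\mA)/Q^n$, where $Q\mA$ is an integer matrix.
\item By Hadamard's inequality, $|\det(Q\mA)|\le\prod_i\|(Q\mA)_i\|_2$.
\item Bounding each row norm by the product of $(1+|\text{entry}|)$ over that row gives a numerator of length at most about $\length{\mA}$.
\item The denominator is better controlled by the fact that $\det(\mA)$ is a sum of products with one entry from each row. This shows that the denominator of $\det(\mA)$ divides $\prod_{i}\prod_j q_{ij}$, so its length is at most $\length{\mA}$.
\end{itemize}
Summing the numerator and denominator bounds gives $2\length{\mA}$.
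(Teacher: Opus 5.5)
\textbf{Part (a).} Your diagnosis of the sum bound is correct. One arithmetic slip: under the paper's definition $\length{1/s}=2+\lceil\log_2 s\rceil$, not $3+\lceil\log_2 s\rceil$; at $s=2^{10}$ you get $23>16$. Your own calculation actually yields $\length{a+b}\le\length{a}+\length{b}+\max\{\length{a},\length{b}\}$, which is the printed bound once the summands are named so that $\length{a}\ge\length{b}$. The paper gives no proof beyond citing \cite{gls}, which states $2(\length{a}+\length{b})$, and your overall route (common denominators, Hadamard's inequality) is the \cite{gls} one.

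\textbf{Part (b) is false as stated, so it should not be left open.} Your own example already refutes it. For $u=(1,1/s)$ and $v=(1,1)$, the value $\pr{u}{v}=(s+1)/s$ is in lowest terms and $\length{u}+\length{v}=8+\lceil\log_2 s\rceil$. However, $\length{(s+1)/s}=2+\lceil\log_2(s+1)\rceil+\lceil\log_2 s\rceil$, which gives $23>18$ at $s=2^{10}$. So no accounting of the $+1$'s can give constant $1$. State (b) as $\length{\pr{u}{v}}\le 2(\length{u}+\length{v})$, which your common-denominator argument does prove. The numerator has length at most $\length{u}+\length{v}-4n+1+\lceil\log_2 n\rceil\le\length{u}+\length{v}$, and the denominator has length at most $\sum_i(\length{q_i}+\length{s_i})$. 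The paper uses (b) only inside $O(\cdot)$ in Lemma~\ref{lem:bfs-delta}, so nothing downstream is affected.

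\textbf{Part (c): the step through $Q\mA$ fails as written.} With $Q$ the product of all $n^2$ entry denominators, $\det(Q\mA)=Q^n\det(\mA)$. Hadamard's inequality applied to $Q\mA$ then only bounds a numerator of length roughly $n\log_2 Q$, which can be about $n$ times $\length{\mA}$, not $\length{\mA}$. For instance, if $\mA=\tfrac12\mH$ for an $n\times n$ Hadamard matrix $\mH$, then $\length{\mA}=3n^2$ while $\det(Q\mA)=\pm 2^{n^3-n}n^{n/2}$. Your numerator bound refers to the fraction over $Q^n$, while your denominator bound refers to the reduced fraction, so the two cannot be added.

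The fix is the \cite{gls} argument. Write $\det(\mA)=p/q$ in lowest terms. Your last bullet gives $q\le\prod_{i,j}q_{ij}$. Applying Hadamard to $\mA$ itself gives $|p|=q\,|\det(\mA)|\le\prod_{i,j}q_{ij}\prod_i\|\mA_i\|_2\le\prod_{i,j}q_{ij}(1+|p_{ij}|)$, using $\|\mA_i\|_2\le\|\mA_i\|_1\le\prod_j(1+|p_{ij}|)$. With the paper's definition this yields $\length{p}\le 1+\length{\mA}-n^2$ and $\length{q}\le 1-n^2+\sum_{i,j}\length{q_{ij}}$, hence $\length{\det(\mA)}\le 2\length{\mA}$. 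Equivalently, clear denominators row by row, multiplying row $i$ by $\prod_j q_{ij}$, rather than multiplying every row by $Q$.
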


\paragraph{The principal pivotal transformation}
Let $\mA\in\R^{n\times n}$ and $\cJ\subseteq[n]$ be an index set such that $\mA_{\cJ\cJ}$ is a nonsingular principal submatrix of $\mA$.
Let $\bar\cJ=[n]\setminus\cJ$ denote the complement of $\cJ$. The \emph{principal pivotal transformation} of matrix $\mA$ for $\mA_{\mathcal{J}\mathcal{J}}$ is the matrix
 \[\PT_\mathcal{J}(\mA)\coloneqq
\begin{bmatrix} \mA_{\mathcal{J}\mathcal{J}}^{-1} & -\mA_{\mathcal{J}\mathcal{J}}^{-1}\,
\mA_{\mathcal{J}\bar{\mathcal{J}}} \\
\mA_{\bar{\mathcal{J}}\mathcal{J}}\, \mA_{\mathcal{J}\mathcal{J}}^{-1} &\;\;
\mA_{\bar{\mathcal{J}}\bar{\mathcal{J}}} - \mA_{\bar{\mathcal{J}}\bar{\mathcal{J}}}\,
\mA_{\mathcal{J}\mathcal{J}}^{-1}\, \mA_{\mathcal{J}\bar{\mathcal{J}}}
\end{bmatrix}. 
\]
For a singleton $\cJ=\{j\}$, we use $\PT_{j}$ to denote $\PT_{\{j\}}$.
The main significance of this operation is that if $y=\mA x$ for the vectors $x,y\in\R^n$, then $(x_{\cJ},y_{\bar\cJ})=\PT_{\cJ}(\mA) (y_{\cJ},x_{\bar\cJ})$. That is, it corresponds to exchanging variables $(\mA x)_\mathcal{J}$ and $x_\mathcal{J}$; 
see \cite[Section 2.3]{cottle2009linear}.
It is also easy to see that principal pivoting defines an equivalence relation of $\R^{n\times n}$: if $\mB=\PT_{\cJ}(\mA)$ and $\mC=\PT_{\cK}(\mB)$, then $\mC=\PT_{\cJ\Delta\cK}(\mA)$. 
 Noting that any principal pivot on a matrix $\mM$ leaves the vector $x\circ (\mM x)$ unchanged, we immediately obtain the following.
\begin{lemma}[\cite{guu1995}]\label{lem:principal-sufficient}
The matrix $\mM\in \R^{n\times n}$ is sufficient if and only if every principal pivotal transform of $\mM$ is sufficient. 
\end{lemma}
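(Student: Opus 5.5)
It suffices to prove one direction: if $\mM$ is sufficient, then every principal pivotal transform $\PT_\cJ(\mM)$ is sufficient. The converse then follows by taking $\cJ=\emptyset$, or by using that pivoting is an equivalence relation: $\mM=\PT_\cJ(\PT_\cJ(\mM))$ since $\cJ\Delta\cJ=\emptyset$. So fix $\cJ$ with $\mM_{\cJ\cJ}$ nonsingular and set $\mB=\PT_\cJ(\mM)$. We must show that $\mB$ is both column and row sufficient.

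\emph{Column sufficiency.} Let $u\in\R^n$ satisfy $u\circ(\mB u)\le\0_n$, and put $v=\mB u$. Define $x\in\R^n$ by $x_\cJ=v_\cJ$ and $x_{\bar\cJ}=u_{\bar\cJ}$. The exchange property applied to $\mB$ (pivoting back on $\cJ$ returns $\mM$) gives $y\coloneqq\mM x$ with $y_\cJ=u_\cJ$ and $y_{\bar\cJ}=v_{\bar\cJ}$. Coordinatewise, $x_iy_i=v_iu_i$ for $i\in\cJ$ and $x_iy_i=u_iv_i$ for $i\in\bar\cJ$. Hence $x\circ(\mM x)=u\circ(\mB u)\le\0_n$. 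Column sufficiency of $\mM$ then forces $x\circ(\mM x)=\0_n$, and therefore $u\circ(\mB u)=\0_n$. This is exactly the invariance of the vector $x\circ(\mM x)$ noted before the lemma.

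\emph{Row sufficiency.} This is the step that needs an extra observation. We must show that $\mB^\top$ is column sufficient, given that $\mM^\top$ is. The plan is to check that $\mB^\top$ is itself a principal pivotal transform of $\mM^\top$, up to a diagonal sign similarity. Writing $\mS=\diag{s}$ with $s_i=-1$ for $i\in\cJ$ and $s_i=1$ otherwise, a direct block comparison with the formula for $\PT_\cJ$ should give
\[
\PT_\cJ(\mM^\top)=\mS\,\PT_\cJ(\mM)^\top\mS .
\]
Indeed, the diagonal blocks of both sides agree. The off-diagonal blocks of $\PT_\cJ(\mM)^\top$ are $\mM_{\cJ\cJ}^{-\top}\mM_{\bar\cJ\cJ}^\top$ and $-\mM_{\cJ\bar\cJ}^\top\mM_{\cJ\cJ}^{-\top}$. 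These are exactly the negatives of the corresponding blocks of $\PT_\cJ(\mM^\top)$, and conjugation by $\mS$ flips their signs.

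Since $\mM^\top$ is column sufficient, the column-sufficiency argument above shows that $\PT_\cJ(\mM^\top)$ is column sufficient. It remains to see that column sufficiency is preserved under $\mA\mapsto\mS\mA\mS$ for a sign matrix $\mS$. This holds because $(\mS x)\circ(\mS\mA\mS\,\mS x)=(\mS x)\circ(\mS\mA x)=x\circ(\mA x)$, using $s_i^2=1$ and $\mS^2=\mathbf{I}$. Therefore $\mB^\top=\mS\,\PT_\cJ(\mM^\top)\,\mS$ is column sufficient, that is, $\mB$ is row sufficient.

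Combining the two parts, $\mB$ is sufficient, which completes the proof.
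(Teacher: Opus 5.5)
Your proof is correct and rests on the same observation the paper uses: a principal pivot leaves $x\circ(\mM x)$ invariant, which gives column sufficiency of $\PT_\cJ(\mM)$ directly. The paper justifies the lemma only with this remark (citing Guu and Cottle), so your identity $\PT_\cJ(\mM^\top)=\mS\,\PT_\cJ(\mM)^\top\mS$, together with the invariance of column sufficiency under sign similarity, makes explicit the row-sufficiency half that the paper leaves implicit.
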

\paragraph{Sign patterns of sufficient matrices}
We state some well-known properties of sufficient matrices; for completeness, we include their proofs.
\begin{lemma}[\cite{Valiaho97}]\label{lem:sufficient-props}
Let $\mM\in \R^{n\times n}$.
\begin{enumerate}[(i)]
\item If $\mM$ is either row sufficient or column sufficient, then $\mM_{ii}\ge 0$ for all $i\in[n]$. \label{it:diagonal}
\item Assume $\mM$ is column sufficient.
Let $i\in [n]$ such that $\mM_{ii}=0$. Then, $\mM_{ij}\mM_{ji}\le 0$ for all $j\in [n]$, and if $\mM_{ji}\neq0$ then $\mM_{ij}\neq0$.\label{it:opposite-column}
\item Assume $\mM$ is row sufficient.
Let $i\in [n]$ such that $\mM_{ii}=0$. Then, $\mM_{ij}\mM_{ji}\le 0$ for all $j\in [n]$, and if $\mM_{ij}\neq 0$ then $\mM_{ji}\neq 0$. \label{it:opposite-row}
\end{enumerate}
\end{lemma}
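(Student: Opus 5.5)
The three parts can be settled by plugging simple test vectors supported on one or two coordinates into the definitions, and then transposing.

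For (i), suppose $\mM$ is column sufficient and $\mM_{ii}<0$ for some $i$. Take $x=e^i$. Then $x\circ(\mM x)$ has $i$-th entry $\mM_{ii}<0$, and all other entries vanish because $x_j=0$ for $j\neq i$. So $x\circ(\mM x)\le\0_n$ while $x\circ(\mM x)\neq\0_n$, which contradicts column sufficiency. If instead $\mM$ is row sufficient, we apply the same argument to $\mM^\top$, which has the same diagonal.

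For (ii), let $\mM$ be column sufficient with $\mM_{ii}=0$, and fix $j\neq i$. We use the test vector $x=e^j+te^i$ with $t\in\R$, which is supported on $\{i,j\}$. Its nonzero entries are
\[
x_i(\mM x)_i=t\,\mM_{ij}+t^2\mM_{ii}=t\,\mM_{ij},\qquad x_j(\mM x)_j=\mM_{jj}+t\,\mM_{ji}.
\]
\emph{Claim $\mM_{ij}\mM_{ji}\le0$.} Suppose instead $\mM_{ij}\mM_{ji}>0$. Choose the sign of $t$ so that $t\mM_{ij}<0$; then $t\mM_{ji}<0$ as well. Take $|t|$ large enough that $\mM_{jj}+t\mM_{ji}<0$, which is possible since $\mM_{jj}\ge0$ is fixed. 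Both nonzero entries of $x\circ(\mM x)$ are now negative, contradicting column sufficiency.

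\emph{Claim: $\mM_{ji}\neq0$ implies $\mM_{ij}\neq0$.} Suppose $\mM_{ij}=0$ and $\mM_{ji}\neq0$. The $i$-th entry is then $0$. Choosing $t$ with $\mM_{jj}+t\mM_{ji}<0$ makes $x\circ(\mM x)\le\0_n$ but nonzero, again a contradiction.

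For (iii), $\mM$ row sufficient means $\mM^\top$ is column sufficient, and $(\mM^\top)_{ii}=\mM_{ii}=0$. Applying (ii) to $\mM^\top$ gives $\mM_{ji}\mM_{ij}\le0$, and that $(\mM^\top)_{ji}=\mM_{ij}\neq0$ implies $(\mM^\top)_{ij}=\mM_{ji}\neq0$, which is exactly (iii).

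The only delicate point is in (ii): one must pick the sign of $t$ so that both coordinates of $x\circ(\mM x)$ become nonpositive simultaneously. This works because $\mM_{ii}=0$ removes the $t^2$ term, leaving the $i$-th entry linear in $t$.
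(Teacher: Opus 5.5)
Your proof is correct and is essentially the paper's argument. You use test vectors supported on $\{i\}$ and $\{i,j\}$ and get (iii) by transposition; your $x=e^j+te^i$ with $|t|$ large is, up to a nonzero scalar multiple (which leaves the sign pattern of $x\circ(\mM x)$ unchanged), the paper's $e^i-\varepsilon\,\mathrm{sgn}(\mM_{ij})e^j$ with $\varepsilon$ small, and in the second claim of (ii) your choice of $t$ from the sign of $\mM_{ji}$ is slightly more careful than the paper's ``same $x$'' remark.
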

\begin{proof}{\bf (a)} If $\mM_{ii}<0$, then $(e^i)\circ (\mM e^i)\lneq \0_n$ and $(e^i)\circ (\mM^\top e^i)\lneq \0_n$, showing that the matrix is neither row nor column sufficient.
{\bf (b)} For a contradiction, assume $\mM_{ii}=0$, and 
$\mM_{ij}\mM_{ji}> 0$. For some $\varepsilon>0$,
consider $x=e^i-\varepsilon\cdot\mathrm{sgn}(\mM_{ij})e^j\in\R^n$. %
Thus, $x_i(\mM x)_i=-|\mM_{ij}|\varepsilon$ and $x_j(\mM x)_j=-|\mM_{ji}|\varepsilon+\mM_{jj}\varepsilon^2$. We have $x_k(\mM x)_k=0$ for $k\notin\{i,j\}$.
For sufficiently small $\varepsilon>0$,  $x\circ(\mM x)\lneq \0_n$ holds. This proves $\mM_{ij}\mM_{ji}\le 0$. If $\mM_{ji}\neq 0$ but $\mM_{ij}=0$, we get a contradiction using the same $x$ as above. Part {\bf (c)} follows by applying part {\bf (b)} for $\mM^\top$.
\end{proof}

\section{Bounding the handicap number}\label{sec:handicap-sec-new}
In this section, we prove Theorem~\ref{thm:kappa-bound}. The key is to 
analyze combinations of two basic solutions to the following polytope. 
For a matrix $\mM\in\R^{n\times n}$, and two disjoint index sets   $\cI^+, \cI^-\subseteq [n]$, $\cI^+\cap \cI^-=\emptyset$, we define the polytope
\[
\cQ(\mM,\cI^+,\cI^-):=\left\{x\in\sigma_n\, |\, \mM_i x\ge 0\;\, \forall i\in \cI^+  ,\;\; \mM_i x\le 0\;\, \forall i\in \cI^- ,\;\; \mM_i x= 0\;\, \forall i\in [n]\setminus(\cI^+\cup \cI^-)\right\}\, ,
\]
where $\sigma_n:=\{x\in\R^n\, |\, x\ge 0\, , \sum_{i=1}^n x_i=1\}$ is the unit simplex.
Note that for any $x\in \cQ(\mM,\cI^+,\cI^-)$ we have  $\cI^+_\mM(x)\subseteq {\cI^+}\subseteq \cI^+_\mM(x)\cup {\cI^0_\mM(x)}^{\phantom{1}}$ and \; $\cI^-_\mM(x)\subseteq \cI^-\subseteq \cI^-_\mM(x)\cup \cI^0_\mM(x)$.
\begin{lemma}\label{lem:bfs-delta}
Let $\mM\in \Q^{n\times n}$ be a rational  matrix of binary encoding length $L$.  
There exists $\delta\ge 2^{-O(n L)}$ and $\Delta \le 2^{O(nL)}$, such that for any $\cI^+, \cI^-\subseteq [n]$, $\cI^+\cap \cI^-=\emptyset$,
and any diagonal matrix $\mD\in\cD^n$ with $\mD_{ii}\in \{\pm 1\}$,
 and every basic feasible solution $v$ to $\cQ(\mD\mM\mD,\cI^+,\cI^-)$, and $i\in[n]$, either $v_i=0$ or $\delta\le |v_i|\le 1$, and either $\mM_i v=0$ or $\delta\le |\mM_i v|\le \Delta$. 
Moreover, for every basic feasible solution $v$, both $v$ and $\mM v$ have encoding length $O(n^2 L)$.
\end{lemma}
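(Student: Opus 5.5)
The approach is the standard Cramer's rule argument for vertices of rational polyhedra. Fix $\cI^+,\cI^-$ and $\mD$ with $\mD_{ii}\in\{\pm1\}$, and write $\mM'=\mD\mM\mD$. The matrix $\mM'$ has the same entries as $\mM$ up to sign, so $\length{\mM'}=\length{\mM}=L$. The polytope $\cQ(\mM',\cI^+,\cI^-)$ is described by the system $\sum_i x_i=1$, $x\ge 0$, and the inequalities or equalities $\mM'_i x\gtrless 0$. All coefficients come from $\mM'$ and the all-ones vector, so the whole constraint matrix has encoding length $O(L+n)=O(L)$; we may assume $L\ge n^2$, since every entry costs at least one bit.

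A basic feasible solution $v$ is the unique solution of a nonsingular $n\times n$ subsystem $\mA v=b$. Here each row of $\mA$ is either $\1^\top$, a unit vector $e^{i\top}$ (a tight constraint $x_i=0$), or a row $\mM'_i$. The right-hand side $b$ is $e^k$ for the row of $\1^\top$; that row must be present, since otherwise $v=0$, which is not in $\sigma_n$.

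\begin{enumerate}[(1)]
\item By Cramer's rule, $v_j=\det(\mA_j)/\det(\mA)$, where $\mA_j$ is $\mA$ with column $j$ replaced by $b$. Both $\mA$ and $\mA_j$ have encoding length at most $O(L)$. Lemma~\ref{lem:encoding-length}\eqref{part:det} then gives that each determinant has length $O(L)$, so each $v_j$ has encoding length $O(L)$ and $v$ has length $O(nL)\subseteq O(n^2L)$.
\item To get the lower bound $\delta$, I first clear denominators. Let $q$ be the product of all denominators in $\mM$; then $\length{q}\le L$. Scaling the rows of $\mA$ that come from $\mM'$ by $q$ gives an integer matrix $\tilde\mA$ with entries of magnitude $2^{O(L)}$, while $\tilde\mA v=b$ still holds.
\item Cramer's rule on $\tilde\mA$ writes $v_j$ as a ratio of integers. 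The denominator is $|\det\tilde\mA|\le n!\,2^{O(nL)}=2^{O(nL)}$. Hence a nonzero $v_j$ satisfies $|v_j|\ge 2^{-O(nL)}$. The upper bound $|v_j|\le1$ is immediate from $v\in\sigma_n$.
\item For $\mM_i v$, note that $\mM v=\mD\mM'\mD v$, so $\mM_i v=\pm \mM'_i(\mD v)$. Define $w=\mD v$, so $|w_j|=|v_j|$. Then $q\,\mM_i v=(q\mM_i)v$ is an integer vector with entries $2^{O(L)}$ times $v$. Writing $v=u/\det\tilde\mA$ with $u$ integral, $q\mM_iv\det\tilde\mA$ is an integer. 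Therefore if $\mM_iv\ne0$, then $|\mM_iv|\ge 1/(q|\det\tilde\mA|)\ge 2^{-O(nL)}$.
\item For the upper bound, $|\mM_iv|\le\sum_j|\mM_{ij}|\le 2^{O(L)}$, using $\|v\|_1=1$. So $\Delta=2^{O(L)}\le 2^{O(nL)}$ works.
\item The encoding length of $\mM_iv$ is at most $\length{\mM_i}+\length{v}=O(nL)$ by Lemma~\ref{lem:encoding-length}\eqref{part:vec-prod}. Summing over $i$ gives $O(n^2L)$ for $\mM v$.
\end{enumerate}

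The main point needing care is that $\delta$ and $\Delta$ must be uniform over all choices of $\cI^+,\cI^-,\mD$ and all bases. This holds because every bound above depends only on $n$, $q$, and the entry sizes of $\mM$, which are invariant under sign changes and basis selection. Taking $\delta$ as the minimum of the two lower bounds finishes the proof.
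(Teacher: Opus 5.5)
Your proof is correct and follows essentially the same route as the paper: Cramer's rule on the nonsingular basis subsystem (the all-ones row plus rows of $\mD\mM\mD$ or unit rows), combined with the determinant encoding-length bound of Lemma~\ref{lem:encoding-length}. The only difference is minor. You get $\delta$ explicitly by clearing denominators and using that $q\det\tilde\mA\cdot\mM_i v$ is an integer, and $\Delta$ directly from $\|v\|_1=1$, whereas the paper reads both bounds off from $\length{\mM_i v}=O(nL)$.
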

\begin{proof}
We assume w.l.o.g.\,$\mD=\mI_n$, noting that the encoding length of $\mD\mM\mD$ is also $L$.
For every basic solution $v$, the vector of nonzero entries $v_B$ arises as the solution to a linear system of the form $\mA v_B=e^k$, where $\mA$ is a nonsingular matrix comprising of a submatrix of $\mM$ and a row of all ones in the $k$-th row.
By Cramer's rule, every entry $v_i>0$ is the ratio of the determinants of two matrices of encoding length $\le L$, thus Lemma~\ref{lem:encoding-length}\eqref{part:det} yields $\length{v_i}=O(L)$, and consequently, $\length{v}=O(nL)$.  Hence, by Lemma~\ref{lem:encoding-length}\eqref{part:vec-prod}, $\length{\mM_i v}=O(nL)$ for all $i\in [n]$, yielding the bounds $\delta$ and $\Delta$.
This also implies $\length{\mM v}=O(n^2 L)$, proving the second part.
\end{proof}
We start by proving  Theorem~\ref{thm:kappa-bound} for the special case of $\Pm$-matrices, where the proof is considerably simpler.
\begin{proposition}\label{thm:kappa-bound-P}
  For a rational matrix $\mM\in\Pm\cap \Q^{n\times n}$ with $L=\length{\mM}$, $\hk(\mM)\le 2^{O(n L)}$.
\end{proposition}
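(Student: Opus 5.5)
The plan is to bound $\hk(\mM,x)$ uniformly over all $x\neq\0_n$ by decomposing $x$ into basic feasible solutions of a suitable polytope $\cQ$ and applying Lemma~\ref{lem:bfs-delta}. This avoids relying on Lemma~\ref{lem:P_handicap_x}: the supremum is then bounded directly, whether or not it is attained.

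\textbf{Reduction to the simplex.} Fix $x\neq \0_n$ with $x^\top\mM x<0$; otherwise $\hk(\mM,x)=0$. Since $\mM\in\Pm$, Lemma~\ref{lem:p-reverse} gives $\cI^+_\mM(x)\neq\emptyset$, so $\hk(\mM,x)$ is given by the finite formula. Let $\mD\in\cD^n$ have $\mD_{ii}=\mathrm{sgn}(x_i)$, with $\mD_{ii}=1$ when $x_i=0$. By Lemma~\ref{lem:diag-rescale-invariant}, $\hk(\mM,x)=\hk(\mD\mM\mD,\mD^{-1}x)$, and $\mD^{-1}x\ge 0$. The handicap is invariant under positive scaling of $x$, since numerator and denominator are both quadratic. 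Hence we may assume $x\in\sigma_n$ and replace $\mM$ by $\mD\mM\mD$, which is still a $\Pm$-matrix of encoding length $L$.

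Set $\cI^+=\cI^+_\mM(x)$ and $\cI^-=\cI^-_\mM(x)$. Since $x\ge 0$, for $x_i>0$ the sign of $x_i(\mM x)_i$ equals the sign of $\mM_i x$. For indices with $x_i=0$, I will instead assign $i$ to $\cI^+$, $\cI^-$, or neither according to the sign of $\mM_i x$; such indices contribute nothing to either sum. Then $x\in\cQ(\mM,\cI^+,\cI^-)$.

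\textbf{Decomposition into basic feasible solutions.} By Carathéodory's theorem, write $x=\sum_{k=1}^{m}\lambda_k v^k$ with $m\le n$, $\lambda\in\sigma_m$, and each $v^k$ a basic feasible solution of $\cQ(\mM,\cI^+,\cI^-)$. Define the denominator quantity $f(y)=\sum_{i\in\cI^+} y_i\,\mM_i y$. Expanding,
\[
f(x)=\sum_{k,l}\lambda_k\lambda_l\sum_{i\in\cI^+} v^k_i\,\mM_i v^l \;\ge\; \sum_k \lambda_k^2 f(v^k)\,.
\]
The inequality holds because every cross term is nonnegative: $v^k\ge 0$, and $\mM_i v^l\ge 0$ for $i\in\cI^+$ since $v^l\in\cQ$.

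\textbf{Lower bound on each $f(v^k)$ (the key step).} For each $k$, every coordinate of $v^k\circ(\mM v^k)$ is $\ge 0$ on $\cI^+$, $\le 0$ on $\cI^-$, and $=0$ elsewhere. If $f(v^k)=0$, then $v^k\circ(\mM v^k)\le\0_n$ with $v^k\neq\0_n$, contradicting Lemma~\ref{lem:p-reverse}. So some $i\in\cI^+$ has $v^k_i>0$ and $\mM_i v^k>0$. Lemma~\ref{lem:bfs-delta} then gives $v^k_i\ge\delta$ and $\mM_i v^k\ge\delta$, so $f(v^k)\ge\delta^2$. Consequently
\[
f(x)\ge\delta^2\sum_k\lambda_k^2\ge \delta^2/m\ge\delta^2/n .
\]

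\textbf{Bounding the numerator and concluding.} For $x\in\sigma_n$ the numerator satisfies
\[
-\sum_{i\in\cI^-}x_i\,\mM_ix\le \sum_i|\mM_ix|\le n\max_{i,j}|\mM_{ij}|\le n2^{L}.
\]
Therefore $\hk(\mM,x)\le n^2 2^{L}/(4\delta^2)$, and $\delta\ge 2^{-O(nL)}$ gives $\hk(\mM,x)\le 2^{O(nL)}$. The bound is uniform in $x$, so taking the supremum proves Proposition~\ref{thm:kappa-bound-P}.

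The main obstacle is the positivity of each $f(v^k)$. This is exactly where the $\Pm$-property enters, via Lemma~\ref{lem:p-reverse}, and it explains why the general sufficient case needs a more delicate argument.
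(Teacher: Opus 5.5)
Your proof is correct and follows essentially the paper's argument. Both reduce to $x\in\sigma_n$ by a sign flip and scaling, decompose $x$ into basic feasible solutions of $\cQ$, use Lemma~\ref{lem:p-reverse} with Lemma~\ref{lem:bfs-delta} to get a term of size at least $\delta^2$ in the denominator, and bound the numerator trivially. The differences are minor: you bound $\hk(\mM,x)$ uniformly instead of invoking attainment (Lemma~\ref{lem:P_handicap_x}, which the paper's argument does not actually need), you use $\sum_k\lambda_k^2 f(v^k)$ instead of the largest coefficient, and your reassignment of indices with $x_i=0\neq(\mM x)_i$ to $\cI^+$ or $\cI^-$ properly ensures $x\in\cQ$, a detail the paper's proof glosses over.
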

\begin{proof}
Recall that for a $\mathcal{P}$-matrix the handicap is attained (Lemma \ref{lem:P_handicap_x}), namely there exists a vector $x\in\R^n$ such that $\hk(\mM)=\hk(\mM,x)$. Let us pick such a vector $x$ and
 define $\mD\in\cD^n$ with $\mD_{ii}=-1$ if $x_i<0$ and $\mD_{ii}=1$ otherwise; note that $\mD^{-1}=\mD$. By Lemma~\ref{lem:diag-rescale-invariant}, $\hk(\mD\mM\mD,\mD x)=\hk(\mM,x)$; we also have $\mD x\ge0$. We can thus replace $\mM$ by $\mM'=\mD\mM\mD\in\Pm$. For simplicity of notation, we can henceforth assume that $x\ge0$.
The statement is straightforward for $x=\0_n$. If $x\neq\0_n$, then, noting that $\hk(\mM,x)=\hk(\mM,\alpha x)$ for any real number $\alpha\neq 0$, we can normalize  such that $\|x\|_1=1$, i.e., $x\in\sigma_n$. 

Let us fix $\cI^+\coloneqq\cI^+_\mM(x)$  and  $\cI^-:=\cI^-_\mM(x)$, and define $\cQ\coloneqq\cQ(\mM,\cI^+,\cI^-)$. If $\cI^-=\emptyset$ then $\hk(\mM,x)=0$. For the rest of the proof, we assume $\cI^-\neq \emptyset$.
 By the above normalization, $x\in \cQ$.
By Carath\'eodory's theorem, we can write $x=\sum_{j=1}^h \lambda_j v^{(j)}$, where $h\le n+1$, each $v^{(j)}$ is a basic feasible solution in $\cQ$, and $\lambda\ge 0$, $\sum_{j=1}^h \lambda_j=1$. We order the indices such that $\lambda_1\ge\lambda_2\ge\ldots\ge \lambda_h$. 

By Lemma~\ref{lem:p-reverse}, there exists an index $i\in [n]$ such that $v^{(1)}_i (\mM v^{(1)})_i>0$, namely $i\in\cI^+$. By Lemma~\ref{lem:bfs-delta}, it follows that $v^{(1)}_i\ge \delta$ and  $(\mM v^{(1)})_i\ge\delta$ for $\delta\ge 2^{-O(nL)}$.
Note that $\lambda_1\ge 1/h\ge1/(n+1)$ as it is the largest coefficient. Hence, $x_i\ge \lambda_1 v^{(1)}_i\ge \delta/(n+1)$ and $(\mM x)_i\ge \lambda_1 (\mM v^{(1)})_i\ge \delta/(n+1)$, yielding $x_i (\mM x_i)>\delta^2/(n+1)^2$.
Further, $x^\top \mM x\ge -\|\mM\|_2\ge - O( 2^{L})$ by the normalization $\|x\|_2\le\|x\|_1= 1$. Thus, $\hk(\mM)=\hk(\mM,x)\le 2n^3\|\mM\|_2/\delta^2\le 2^{O(n L)}$.
\end{proof}
The following lemma will be needed in the proofs of Theorem~\ref{thm:kappa-bound} and Theorem~\ref{thm:valiaho}.
\begin{lemma}\label{lem:two-basic} Let $\mM\in \R^{n\times n}$ and $x\ge 0$. Then, there exist basic feasible solutions $u$ and $v$ to
$\cQ(\mM,\cI^+_{\mM}(x),\cI^-_{\mM}(x))$, and $\lambda\in [0,1]$, such that 
$\hk(\mM,u+\lambda v)\ge \hk(\mM,x)/(n+1)^2-1$.
\end{lemma}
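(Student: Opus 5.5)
The plan is to decompose $x$ into basic feasible solutions via Carath\'eodory, as in the proof of Proposition~\ref{thm:kappa-bound-P}. Then I will show that one \emph{pair} of them already captures a $1/h^2$ fraction of the ``negative mass'' of $x$, while contributing no more ``positive mass'' than $x$ does.

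\textbf{Setup.} Since $\hk(\mM,\alpha x)=\hk(\mM,x)$ for $\alpha>0$, and the statement is trivial for $x=\0_n$, I normalize $x\in\sigma_n$. Write $\cI^+=\cI^+_\mM(x)$, $\cI^-=\cI^-_\mM(x)$ and $\cQ=\cQ(\mM,\cI^+,\cI^-)$, so that $x\in\cQ$. By Carath\'eodory, $x=\sum_{j=1}^h\lambda_j v^{(j)}$ with $h\le n+1$, basic feasible solutions $v^{(j)}\in\cQ$, $\lambda\ge0$ and $\sum_j\lambda_j=1$.

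\textbf{Sign structure.} Every $w\in\cQ$ satisfies $w\ge0$, $\mM_iw\ge0$ for $i\in\cI^+$, $\mM_iw\le0$ for $i\in\cI^-$, and $\mM_iw=0$ otherwise. Hence for any $y=\sum_j\mu_jv^{(j)}$ with $\mu\ge0$ define
\[
P(y)\coloneqq\sum_{i\in\cI^+}y_i(\mM y)_i=\sum_{j,k}\mu_j\mu_kP_{jk},\qquad N(y)\coloneqq-\sum_{i\in\cI^-}y_i(\mM y)_i=\sum_{j,k}\mu_j\mu_kN_{jk},
\]
where $P_{jk}=\sum_{i\in\cI^+}v^{(j)}_i(\mM v^{(k)})_i\ge0$ and $N_{jk}=-\sum_{i\in\cI^-}v^{(j)}_i(\mM v^{(k)})_i\ge0$. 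All these terms are nonnegative. Moreover $y^\top\mM y=P(y)-N(y)$, and $\cI^\pm_\mM(y)\subseteq\cI^\pm$. So whenever $N(y)>P(y)$ we get $y^\top\mM y<0$, and $4\hk(\mM,y)+1=N(y)/P(y)$ (read as $\infty$ if $P(y)=0$).

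\textbf{Main step.} If $\hk(\mM,x)=0$ there is nothing to show, since $\hk\ge0$ always. Otherwise, choose the pair $(j,k)$ maximizing $\lambda_j\lambda_kN_{jk}$; since the $h^2$ terms sum to $N(x)$, this gives $\lambda_j\lambda_kN_{jk}\ge N(x)/h^2$. I will take $y=\lambda_jv^{(j)}+\lambda_kv^{(k)}$ (just $y=\lambda_jv^{(j)}$ if $j=k$). Dropping nonnegative terms gives $N(y)\ge\lambda_j\lambda_kN_{jk}\ge N(x)/h^2$ and $P(y)\le P(x)$.

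\textbf{Case analysis.}
\begin{itemize}
\item If $P(x)=0$, i.e.\ $\hk(\mM,x)=\infty$, then $P(y)=0<N(y)$, so $\hk(\mM,y)=\infty$.
\item Otherwise $N(y)/P(y)\ge (4\hk(\mM,x)+1)/h^2$.
\begin{itemize}
\item If this ratio exceeds $1$, then $\hk(\mM,y)\ge(4\hk(\mM,x)+1)/(4h^2)-1/4\ge\hk(\mM,x)/(n+1)^2-1$.
\item If the ratio is at most $1$, then $\hk(\mM,x)/(n+1)^2-1<0\le\hk(\mM,y)$ anyway.
\end{itemize}
\end{itemize}
Finally, assuming w.l.o.g.\ $\lambda_j\ge\lambda_k$ and $\lambda_j>0$, I rescale to $y/\lambda_j=u+\lambda v$ with $u=v^{(j)}$, $v=v^{(k)}$ and $\lambda=\lambda_k/\lambda_j\in[0,1]$. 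By scale invariance of $\hk$ this gives the claim.

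\textbf{Expected obstacle.} The delicate point is the bookkeeping of the sign structure. The inequality $P(y)\le P(x)$ and the formula $4\hk(\mM,y)+1=N(y)/P(y)$ both require that the index sets of $y$ are contained in $\cI^\pm$ and that the cross terms $v^{(j)}_i(\mM v^{(k)})_i$ have a consistent sign. Both facts come from $v^{(j)}\ge0$ together with the fixed sign constraints defining $\cQ$. The degenerate cases $P(y)=0$ and $y^\top\mM y\ge0$ also need the separate treatment given in the case analysis above.
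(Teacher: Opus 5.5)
Your proposal is correct and follows the paper's proof essentially step by step. You normalize $x$ into $\cQ$, decompose it by Carath\'eodory into at most $n+1$ basic feasible solutions, pick the pair maximizing $\lambda_j\lambda_k N_{jk}$, and compare the negative and positive parts of $\lambda_j v^{(j)}+\lambda_k v^{(k)}$ with those of $x$ before rescaling to $u+\lambda v$. Your explicit handling of the degenerate cases ($P(x)=0$, and $N(y)\le P(y)$, where the handicap is $0$ rather than given by the ratio formula) and your maximization over all ordered pairs before relabeling are, if anything, slightly more careful than the paper's write-up.
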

\begin{proof}
Let $\cI^-\coloneqq\cI^-_{\mM}(x)$, $\cI^+\coloneqq\cI^+_{\mM}(x)$, and $\cQ\coloneqq\cQ(\mM,\cI^+,\cI^-)$.
If $x=\0_n$ or  $\cI^-(x)=\emptyset$ , then $\hk(\mM,x)=0$ and therefore the statement holds for any $u,v$, and $\lambda$. For the rest, let us assume $x\neq\0_n$ and  $\cI^-\neq\emptyset$.
Noting that $\hk(\mM,x)=\hk(\mM,\alpha x)$ for any $\alpha\neq 0$, we can normalize  such that $\sum_{i=1}^n x_i=1$. 
Hence, $x\in \cQ$.
By Carath\'eodory's theorem, we can write $x=\sum_{j=1}^h \lambda_j v^{(j)}$, where $h\le n+1$, each $v^{(j)}$ is a basic feasible solution in $\cQ$, and $\lambda\ge 0$, $\sum_{j=1}^h \lambda_j=1$. We order the indices such that $\lambda_1\ge\lambda_2\ge\ldots\ge \lambda_h>0$. 
Let us define
\begin{equation}A_{jk}\coloneqq -\sum_{i\in \cI^-} v^{(j)}_i (\mM v^{(k)})_i\quad \mbox{and}\quad  B_{jk}\coloneqq\sum_{i\in \cI^+} v^{(j)}_i (\mM v^{(k)})_i\, .\label{eq:A-B-def-1}\end{equation}
Thus, we can write
\begin{equation}\label{eq:kappa-A-B-1}
1+4\hk(\mM,x)=\frac{\sum_{j,k\in [h]} \lambda_j \lambda_k A_{jk}}{\sum_{j,k\in [h]}\lambda_j \lambda_k B_{jk}}\, .
\end{equation}
Let us pick $j,k\in [h]$, $j\le k$ such that $\lambda_{j} \lambda_{k} A_{j k}$ is maximal ($j=k$ is possible). For $z\coloneqq \lambda_{j} v^{({j})}+\lambda_{k} v^{({k})}$, we see that
\[
\begin{aligned}
1+4\hk(\mM,x)&\le h^2\cdot \frac{\lambda_{{j}}\lambda_{k} A_{{j}{k}}}{\sum_{j',k'\in[h]}\lambda_{j'} \lambda_{k'} B_{j'k'}}\\
&\le
h^2\cdot \frac{-\sum_{i\in\cI^-}\left(\lambda_{j}v_i^{(j)}+\lambda_{k}v_i^{(k)}\right) 
\mM_i \left(\lambda_{j}v^{(j)}+\lambda_{k}v^{(k)}\right) } {\sum_{i\in\cI^+}\left(\lambda_{j}v_i^{(j)}+\lambda_{k}v_i^{(k)}\right) 
\mM_i \left(\lambda_{j}v^{(j)}+\lambda_{k}v^{(k)}\right)}\\
&= h^2(1+4\hk(\mM,z))\, ,
\end{aligned}
\]
and consequently, $\hk(\mM,z)\ge \hk(\mM,x)/(n+1)^2-1$. Let us now set 
\[
u\coloneqq v^{(j)}\, ,\quad v\coloneqq v^{(k)}\, ,\quad\mbox{and}\quad \lambda\coloneqq \frac{\lambda_k}{\lambda_j}\, .
\]
By the choice $j\le k$, we have $\lambda_j\ge\lambda_k$, implying $\lambda\in[0,1]$.
Then, $\hk(\mM,u+\lambda v)=\hk(\mM,z)$, and the statement follows.
\end{proof}

We next derive a bound on $\hk(\mM,z)$ where $z=u+\lambda v$ for basic feasible solutions to a polytope of the form $\cQ(\mM,\cI^+,\cI^-)$.
\begin{lemma}\label{lem:u-v-cases}
Let $\mM\in\R^{n\times n}$. Then, there is a finite value $C(\mM)\in\R$ such that 
for any  $\cI^+, \cI^-\subseteq [n]$ such that $\cI^+\cap \cI^-=\emptyset$, any 
two basic feasible solutions $u$ and $v$ to $\cQ(\mM,\cI^+,\cI^-)$, and $\lambda\in [0,1]$, at least one of the following hold: 
\begin{enumerate}[(a)]
\item\label{it:C-C} $\hk(\mM,u+\lambda v)\le C(\mM)$,
\item\label{it:C-u-v} $u\circ \mM u=\0_n$ and $u\circ (\mM v) + v\circ (\mM u)\lneq \0_n$,
\item \label{it:C-u}$u\circ \mM u\lneq \0_n$,
\item\label{it:C-v} $v\circ \mM v\lneq \0_n$.
\end{enumerate} 
If $\mM \in \Q^{n\times n}$ is a rational  matrix, then this holds for $C(\mM)= 2^{O(n \length{\mM})}$.
\end{lemma}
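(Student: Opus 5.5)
We prove the lemma by expanding $\hk(\mM,u+\lambda v)$ as a quadratic expression in $\lambda$, using the finiteness of the set of basic feasible solutions. For a fixed $\cI^+,\cI^-$ define, as in the proof of Lemma~\ref{lem:two-basic},
\[
A(u,v)\coloneqq -\sum_{i\in\cI^-}\bigl(u_i(\mM v)_i+v_i(\mM u)_i\bigr),\qquad B(u,v)\coloneqq \sum_{i\in\cI^+}\bigl(u_i(\mM v)_i+v_i(\mM u)_i\bigr),
\]
and similarly $A_u, B_u$ for $u\circ\mM u$ restricted to $\cI^-$ and $\cI^+$ (with the appropriate sign), and $A_v,B_v$ for $v$. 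For $z=u+\lambda v$ we have $z\in\cQ$ up to scaling, hence $\cI^\pm_\mM(z)\subseteq\cI^\pm$. Every product $z_i(\mM z)_i$ with $i\in\cI^+$ is $\ge0$ and with $i\in\cI^-$ is $\ge0$ after the sign flip, and indices outside $\cI^+\cup\cI^-$ contribute $0$. Therefore
\[
1+4\hk(\mM,z)=\frac{A_u+\lambda A(u,v)+\lambda^2A_v}{B_u+\lambda B(u,v)+\lambda^2 B_v},
\]
whenever $z^\top\mM z<0$, and all six coefficients are nonnegative. This holds because $u,v,\mM u,\mM v$ have sign patterns consistent with $\cI^\pm$: on $\cI^+$ we have $u_i,v_i\ge0$ and $(\mM u)_i,(\mM v)_i\ge0$, and on $\cI^-$ we have $u_i,v_i\ge0$ and $(\mM u)_i,(\mM v)_i\le 0$.

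Next we assume (b), (c), (d) all fail and bound the ratio. Failure of (c) means $u\circ\mM u$ is either $\0_n$ or has a positive entry; since on $\cI^-$ the entries are $\le0$ and on $\cI^+$ they are $\ge0$, this says $B_u>0$ or $A_u=B_u=0$. The same holds for $v$. We then compare each numerator term with some positive denominator term:
\begin{itemize}
\item If $B_u>0$, then $B_u\ge\delta^2$ by Lemma~\ref{lem:bfs-delta}, while the numerator is at most $3n\Delta$ since $\lambda\le1$ and entries are bounded. So the ratio is at most $3n\Delta/\delta^2$.
\item If $A_u=B_u=0$, i.e.\ $u\circ\mM u=\0_n$, failure of (b) means $u\circ\mM v+v\circ\mM u$ is $\0_n$ or has a positive entry. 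In the first subcase $A(u,v)=B(u,v)=0$, and the ratio is $A_v/B_v$, bounded by $n\Delta/\delta^2$ if $B_v>0$, or giving $z^\top\mM z=0$ (so $\hk=0$) if $A_v=B_v=0$. In the second subcase $B(u,v)\ge\delta^2$. The ratio is $(\lambda A(u,v)+\lambda^2A_v)/(\lambda B(u,v)+\lambda^2B_v)$; cancelling one $\lambda$ gives $(A(u,v)+\lambda A_v)/(B(u,v)+\lambda B_v)\le 2n\Delta/\delta^2$. The case $\lambda=0$ gives $z=u$ with $\hk=0$.
\end{itemize}
The cancellation of $\lambda$ in the second subcase is the key step: it is why condition (b) involves the mixed term. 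Without it, small $\lambda$ would let $\lambda^2 A_v$ compete with nothing.

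It remains to handle the general real case and the constant. For real $\mM$, there are finitely many choices of $(\cI^+,\cI^-)$ and finitely many basic feasible solutions. We therefore take $\delta>0$ to be the minimum nonzero value of $|u_i|$, $|(\mM u)_i|$ and of the relevant nonzero coefficients above, and $\Delta$ the maximum, over all these finite choices. This gives a finite $C(\mM)$; note the denominator coefficients are sums of products, so we use the minimum over the finitely many nonzero values of $B_u$ and $B(u,v)$ directly. For rational $\mM$, Lemma~\ref{lem:bfs-delta} gives $\delta\ge2^{-O(nL)}$ and $\Delta\le2^{O(nL)}$, so $C(\mM)=O(n\Delta/\delta^2)=2^{O(nL)}$.
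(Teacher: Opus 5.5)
Your proof is correct and follows essentially the same route as the paper. You write $1+4\hk(\mM,u+\lambda v)$ as a ratio of two quadratics in $\lambda$ with sign-consistent coefficients, and bound it by $O(n\Delta/\delta^2)$ whenever $u\circ(\mM u)$ has a positive entry. When $u\circ(\mM u)=\0_n$, you cancel one factor of $\lambda$ and split first on the mixed term $u\circ(\mM v)+v\circ(\mM u)$ and then on $v\circ(\mM v)$, taking $\delta,\Delta$ from finiteness of the basic solutions (or Lemma~\ref{lem:bfs-delta} in the rational case).

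The only differences are cosmetic. You argue contrapositively by assuming (b)--(d) fail. A couple of your numerator constants are also slightly undercounted: entrywise bounds give $4n\Delta$ rather than $3n\Delta$, and $3n\Delta$ rather than $2n\Delta$. This is immaterial, since any finite constant suffices.
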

\begin{proof}
Let $0<\delta\le \Delta<\infty$ be chosen such that for any basic feasible solution $x$ to $\cQ(\mM,\cI^+,\cI^-)$ for any choice of $\cI^+$, $\cI^-$, and $i\in[n]$, either $x_i=0$ or $\delta\le x_i\le \Delta$, and either $(\mM x)_i=0$ or $\delta\le (\mM x)_i\le \Delta$. Since there are finitely many choices of $\cI^+$, $\cI^-$, and finitely many basic feasible solutions in each case, we can choose $\delta$ positive and $\Delta$ finite. Let us define $C(\mM)\coloneqq 4n\Delta^2/\delta^2$; we show that this is suitable choice for \eqref{it:C-C}.

Let $z\coloneqq u+\lambda v$. If $z\circ (\mM z)\ge \0_n$, then we get \eqref{it:C-C} by $0\le C(\mM)$.
For the rest of the proof, we assume that $\cI^-_\mM(z)\neq 0$.
Since $\cI^+_\mM(z)\subseteq \cI^+\subseteq \cI^+_\mM(z)\cup \cI^0_\mM(z)$ and $\cI^-_\mM(z)\subseteq \cI^-\subseteq \cI^-_\mM(z)\cup \cI^0_\mM(z)$, we can write 
 \begin{equation}\label{eq:u-v-lambda-t-1}
1+4\hk(\mM,z)= \frac{-\sum_{i\in \cI^-} u_i (\mM u)_i +\lambda \left( u_i (\mM v)_i+  v_i (\mM u)_i\right) +\lambda^2 v_i (\mM v)_i }
 {\sum_{i\in \cI^+} u_i (\mM u)_i +\lambda \left( u_i (\mM v)_i+  v_i (\mM u)_i\right) +\lambda^2 v_i (\mM v)_i }\, .
 \end{equation}
 By the above choices, the numerator is at most $4n\Delta^2$. If $\cI^+_{\mM}(u)\neq \emptyset$, then the denominator is at least $\delta^2$. Thus, the above expression is at most $4n\Delta^2/\delta^2=C(\mM)$, showing the bound $\hk(\mM,z)\le C(\mM)$ of case \eqref{it:C-C}.

Let us assume henceforth $\cI^+_{\mM}(u)= \emptyset$, i.e., $u\circ \mM u\le \0_n$. If $u\circ \mM u\lneq\0_n$, then we obtain case \eqref{it:C-u}. Thus, let us assume $u\circ \mM u=\0_n$. From \eqref{eq:u-v-lambda-t-1}, we obtain 
 \begin{equation}\label{eq:u-v-lambda-t-2}
1+4\hk(\mM,z)= \frac{-\sum_{i\in \cI^-}  u_i (\mM v)_i+  v_i (\mM u)_i +\lambda v_i (\mM v)_i }
 {\sum_{i\in \cI^+} u_i (\mM v)_i+  v_i (\mM u)_i +\lambda v_i (\mM v)_i }\, .
 \end{equation}
 If $\sum_{i\in \cI^+} u_i (\mM v)_i+  v_i (\mM u)_i>0$, then again the denominator is at least $\delta^2$, while the numerator is at most $3n\Delta^2$, leading to case \eqref{it:C-C}. Assume now $\sum_{i\in \cI^+} u_i (\mM v)_i+  v_i (\mM u)_i= 0$, i.e., $u\circ (\mM v) + v\circ (\mM u)\leq \0_n$. If equality does not hold, then we obtain case \eqref{it:C-u-v}. Otherwise, if $u\circ (\mM v) + v\circ (\mM u)=\0_n$, then \eqref{eq:u-v-lambda-t-2} further becomes
  \begin{equation}\label{eq:u-v-lambda-t-3}
1+4\hk(\mM,z)= \frac{-\sum_{i\in \cI^-} v_i (\mM v)_i }
 {\sum_{i\in \cI^+}  v_i (\mM v)_i }\, .
 \end{equation}
 We assumed $\cI^-_\mM(z)\neq 0$ at the beginning; at this point, this implies $\cI^-_\mM(v)\neq 0$, i.e., the numerator is strictly positive.
Similarly as in the first step of the proof, if $\cI^+_{\mM}(v)\neq \emptyset$ then the right-hand side is at most $n\Delta^2/\delta^2\le C(\mM)$, leading to case~\eqref{it:C-C}. Otherwise, if $\cI^+_{\mM}(v)= \emptyset$, then we get $v\circ\mM v\lneq \0_n$ as in
 case~\eqref{it:C-v}. 

For the last part, let $\mM$ be a rational matrix of binary encoding length $L$. By Lemma~\ref{lem:bfs-delta}, we can choose $\delta\ge 2^{-O(nL)}$ and $\Delta \le 2^{O(n L)}$, and therefore $C(\mM)=4n\Delta^2/\delta^2=2^{O(n L)}$.
\end{proof}

We are ready to prove Theorem~\ref{thm:kappa-bound}.
\kappabound*
\begin{proof}
Let $x\in\R^{n}$ denote a solution such that $\hk(\mM,x)\ge \hk(\mM)/2$. 
As in the proof of Proposition~\ref{thm:kappa-bound-P}, we can assume that $x\ge \0_n$.
Lemma~\ref{lem:two-basic} guarantees the existence of basic feasible solutions $u$ and $v$ to 
$\cQ(\mM,\cI^+_{\mM}(x),\cI^-_{\mM}(x))$, and $\lambda\in [0,1]$, such that 
$\hk(\mM,u+\lambda v)\ge \hk(\mM,x)/(n+1)^2-1\ge \hk(\mM)/(2(n+1)^2)-1$.
We now use Lemma~\ref{lem:u-v-cases} for $\mM$. Case~\eqref{it:C-C} implies $\hk(\mM,u+\lambda v)\le 2^{O(nL)}$, and therefore $\hk(\mM)\le  2^{O(nL)}$. Cases~\eqref{it:C-u} and \eqref{it:C-v} immediately contradict the assumption $\mM\in \Pms$. In the remaining case \eqref{it:C-u-v}, we can consider a sequence $z_t=u+\lambda_t v$ for $\lambda_t\to 0$. Using the expression \eqref{eq:u-v-lambda-t-2}, we can see that $\hk(\mM,z_t)\to \infty$, again a contradiction to $\mM\in \Pms$.
\end{proof}

From the proof of Lemma~\ref{lem:u-v-cases}, we can also derive the following statement that will be useful later on.

\begin{lemma}\label{lem:cross-product}
Let $\mM\in \Pms\cap \R^{n\times n}$, $\cI^-,\cI^+\subseteq[n]$, and $u,v\in\cQ(\mM,\cI^+,\cI^-)$. If $u\circ (\mM u)=\0_n$, then 
\[
  (1+4\hk(\mM)) \sum_{i\in \cI^+} u_i (\mM v)_i+  v_i (\mM u)_i \ge -\sum_{i\in \cI^-} u_i (\mM v)_i+  v_i (\mM u)_i\, . 
\]
\end{lemma}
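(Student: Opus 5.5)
The plan is to consider $z_t\coloneqq u+t v$ for small $t>0$ and use the definition of $\hk(\mM,z_t)\le\hk(\mM)$, then send $t\to0$. Write
\[
S^+\coloneqq\sum_{i\in \cI^+} u_i (\mM v)_i+ v_i (\mM u)_i,\qquad S^-\coloneqq-\sum_{i\in \cI^-} u_i (\mM v)_i+ v_i (\mM u)_i .
\]
Since $u,v\in\cQ(\mM,\cI^+,\cI^-)$, the set $\cQ$ is a polyhedral cone intersected with the simplex; so $u+tv$ lies, after rescaling by $1/(1+t)$, again in $\cQ$. Hence $(\mM z_t)_i\ge 0$ on $\cI^+$, $\le 0$ on $\cI^-$, and $=0$ elsewhere. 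Also $z_t\ge 0$. Therefore every $i\in\cI^+$ has $(z_t)_i(\mM z_t)_i\ge0$, every $i\in\cI^-$ has $(z_t)_i(\mM z_t)_i\le0$, and all other terms vanish. This lets me write the handicap with sums over $\cI^\pm$ as in \eqref{eq:u-v-lambda-t-1}.

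The key step uses $u\circ(\mM u)=\0_n$ to kill the constant terms. I get
\[
\sum_{i\in\cI^+}(z_t)_i(\mM z_t)_i=t\,S^+ + t^2 P,\qquad -\sum_{i\in\cI^-}(z_t)_i(\mM z_t)_i=t\,S^- + t^2 N,
\]
where $P=\sum_{i\in\cI^+}v_i(\mM v)_i$ and $N=-\sum_{i\in\cI^-}v_i(\mM v)_i$ are fixed numbers.

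Next I apply the $\Pms$ property in its unnormalized form: for every $x$,
\[
(1+4\hk(\mM))\sum_{i\in\cI^+_\mM(x)}x_i(\mM x)_i\ge-\sum_{i\in\cI^-_\mM(x)}x_i(\mM x)_i .
\]
This follows from the definition of $\hk(\mM,x)$ in all three cases. When $x^\top\mM x\ge0$ it is trivial since $\hk\ge0$. When $\cI^+$ is empty and $x^\top\mM x<0$, it would force $\hk=\infty$, which is excluded because $\mM\in\Pms$. I apply it to $x=z_t$, noting that the sums over $\cI^\pm_\mM(z_t)$ coincide with the sums over $\cI^\pm$ because the extra terms are zero. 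This gives
\[
(1+4\hk(\mM))(tS^++t^2P)\ge tS^-+t^2N .
\]
Dividing by $t>0$ and letting $t\to0$ yields the claim.

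The only delicate point is justifying that the sign of each term over $\cI^\pm$ is correct, so that the handicap expression can be written with these fixed index sets. This rests on convexity of $\cQ$ together with $z_t\ge0$, as argued above. No bound like $C(\mM)$ is needed.
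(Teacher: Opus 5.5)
Your proof is correct and is the paper's argument: set $z=u+\lambda v$, use $u\circ(\mM u)=\0_n$ to eliminate the $\lambda$-free terms, invoke $\hk(\mM)\ge\hk(\mM,z)$, divide by $\lambda$ and let $\lambda\to 0$. You write the handicap bound in cleared-denominator form rather than as the ratio \eqref{eq:u-v-lambda-t-2}, which is a small but welcome tidying because it avoids any worry about a vanishing denominator.
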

\begin{proof}
We let $z\coloneqq u+\lambda v$ for $\lambda> 0$. Then, \eqref{eq:u-v-lambda-t-2} holds. The statement follows in the limit $\lambda\to 0$ and since $\hk(\mM)\ge \hk(\mM,z)$.
\end{proof}

\section{A new proof of V\"aliaho's theorem}\label{sec:valiaho}
Our goal in this section is to give a simple new proof of Theorem~\ref{thm:valiaho}, a strengthening of V\"aliaho's theorem on the equivalence of $\Pms$ and sufficient matrices \cite{valiaho1996p}. Our main contribution is formulating condition \eqref{it:u-v}, and using this to derive the equivalence of \eqref{it:PM} and \eqref{it:suff}. We restate the  theorem here for convenience. 
\valiaho*

The first lemma shows \eqref{it:PM} implies \eqref{it:suff}. This statement was first shown by Guu and Cottle \cite{guu1995}; we include a simple, direct proof for completeness. %

\begin{lemma}\label{lem:p-star-suff}
Every $\Pms$-matrix is sufficient.
\end{lemma}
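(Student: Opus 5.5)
We will show column sufficiency directly from the definition of the handicap number, and then deduce row sufficiency from the column case. Here $\Pms$ denotes the matrices with finite handicap $\hk(\mM)<\infty$; for such $\mM$, every $x\in\R^n$ satisfies $x^\top\mM x\ge -4\hk(\mM)\sum_{i\in\cI^+_\mM(x)}x_i(\mM x)_i$.

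\emph{Column sufficiency.} Suppose $x\circ(\mM x)\le\0_n$. Then $\cI^+_\mM(x)=\emptyset$. If also $x\circ(\mM x)\neq\0_n$, then $x^\top\mM x<0$, and the first case of the definition of $\hk(\mM,x)$ gives $\hk(\mM,x)=\infty$. This contradicts $\mM\in\Pms$. Hence $x\circ(\mM x)=\0_n$, and $\mM$ is column sufficient.

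\emph{Row sufficiency.} Here I would avoid relying on the claim that $\Pms$ is closed under transposition, since that is nontrivial. Instead I would argue via principal pivots, which leave $x\circ(\mM x)$ unchanged.

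Suppose $y\circ(\mM^\top y)\lneq\0_n$ for some $y$. We need a vector $x$ with $x\circ(\mM x)\lneq\0_n$ built from $\mM$ itself, which would contradict column sufficiency. The tool is a perturbation argument, following the proof of Lemma~\ref{lem:sufficient-props}. Take $x=y+\varepsilon w$ and use the identity
\[
\sum_i y_i(\mM^\top y)_i=y^\top\mM y=\sum_i y_i(\mM y)_i .
\]
This shows $y^\top\mM y<0$. So $y$ itself has a negative quadratic form under $\mM$, and finiteness of $\hk(\mM,y)$ forces $\cI^+_\mM(y)\neq\emptyset$. That alone is not yet a contradiction.

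The cleaner route is a limiting argument. Consider $\mM_t=\mM+t\mI$ for $t>0$. Since $\Pms$ is equivalent to a positivity statement on quadratic forms with weights, I would show $\mM+t\mI\in\Pm$ for every $t>0$; then letting $t\to0$ transfers properties that survive limits.

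The main obstacle is that row sufficiency is not closed under limits. So I would ultimately use the standard fact that for $\mM\in\Pms$ with handicap $\kappa$, the transpose satisfies
\[
x^\top\mM^\top x=x^\top\mM x,
\]
while $x\circ(\mM^\top x)$ differs from $x\circ(\mM x)$. I would attempt to bound $\hk(\mM^\top)$ via the characterization of $\Pms$ as
\[
(1+4\kappa)\sum_{i\in\cI^+}x_i(\mM x)_i+\sum_{i\in\cI^-}x_i(\mM x)_i\ge0
\]
for all $x$, equivalently by LP duality. I expect this duality step to be the hard part.

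If it fails, my fallback is this. Row sufficiency of $\mM$ is equivalent to the statement that for every $q$, the dual LCP solution set is convex and equal to the set of KKT points. Kojima et al.\ showed that $\Pms$ matrices have this property via IPA analysis, but that route is too heavy to reproduce here.
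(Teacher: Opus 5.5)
\textbf{Gap: row sufficiency is not proved.} Your column-sufficiency argument is correct and matches the paper. Row sufficiency, however, is the entire nontrivial content of the lemma, and none of your routes establishes it.
\begin{itemize}
\item The perturbation observation only gives $y^\top\mM y<0$ and $\cI^+_\mM(y)\neq\emptyset$. As you note yourself, this is compatible with $\hk(\mM)<\infty$.
\item The limiting argument via $\mM+t\mI$ cannot work as stated. Limits of $\Pm$-matrices need not be row sufficient; for example, $\left(\begin{smallmatrix}0&1\\0&0\end{smallmatrix}\right)+t\mI\in\Pm$ for every $t>0$, but the limit is not row sufficient. Making the argument quantitative would require a bound on $\hk(\mM^\top)$ in terms of $\hk(\mM)$. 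That is exactly the ``duality step'' you leave open. It is not clear how LP duality on the defining inequality would give it, because transposing changes the vector $x\circ(\mM x)$ on which the handicap depends.
\item The fallback is an appeal to an external theorem, not a proof. It is also stated imprecisely and misattributed: the paper credits the row sufficiency of $\Pms$-matrices to Guu and Cottle, not Kojima et al.
\end{itemize}

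\textbf{The missing idea.} $\hk(\mM,x)$ depends only on the vector $x\circ(\mM x)$. This vector is invariant under principal pivoting and compatible with restriction to principal submatrices, so $\Pms$ is closed under both operations. That closure allows a minimal-counterexample argument carried out entirely with $\mM$.

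\textbf{How the paper argues.} It takes $n$ minimal and $x$ with $x\circ(\mM^\top x)\lneq\0_n$, and first reduces to the case where every $x_i$ and every $y_i=(\mM^\top x)_i$ is nonzero, so that $x_iy_i<0$ for all $i$:
\begin{itemize}
\item If $x_i=0$, delete index $i$.
\item If $(\mM^\top x)_i=0$, pivot on $\{i\}$ or $\{i,j\}$.
\item If column $i$ of $\mM$ vanishes, directly exhibit $\hk(\mM,\cdot)\to\infty$.
\end{itemize}
Next, shrink $|x_i|$ until a sign condition becomes tight, and invoke minimality. This forces the rigid structure $\mM_{ij}=y_j/x_i$ for $i\neq j$ and $\mM_{ii}=-(n-2)y_i/x_i$. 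Then $x'=(1/y_i)_i$ satisfies $\mM x'=(1/x_i)_i$, so $x'\circ(\mM x')\lneq\0_n$. This contradicts the column sufficiency you already established.
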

\begin{proof}
It follows by definition that every $\Pms$-matrix is column sufficient. For a contradiction, suppose $\mM\in\Pms$ is not row sufficient. Let us pick such an example with $n$ as small as possible, along with a vector $x\in\R^n$ such that $x\circ (\mM^\top x)\lneq \0_n$. Clearly, $n\ge 2$, as for $n=1$ column and row sufficiency are both equivalent to $\mM_{11}\ge 0$.

If $x_i=0$ for some $i\in [n]$, then we get a smaller counterexample by deleting the $i$-th row and $i$-th column of $\mM$. Hence, we may assume $x_i\neq 0$ for all $i\in [n]$. 

Assume next $(\mM^\top x)_i=0$ for some $i\in [n]$. If $\mM_{ii}\neq 0$, then consider a principal pivotal transformation leading to $\overline{\mM}=\PT_i(\mM)$ and $\bar x\in\R^n$ such that $\bar x\circ (\overline{\mM}^\top \bar x)\lneq \0_n$ and $\bar x_i=0$. Here, $\bar x$ is obtained from $x$ by replacing $x_i$ by $(\mM x)_i$. Again, we get a smaller counterexample after removing the $i$-th row and $i$-th column of $\overline\mM$. If $\mM_{ii}=0$ but $\mM_{ji}\neq 0$ for some $j\neq i$, then column sufficiency and Lemma~\ref{lem:sufficient-props}\eqref{it:opposite-column} imply that $\mM_{ij}\neq 0$, and consequently, $\det(\mM_{\cH\cH})\neq 0$ for $\cH=\{i,j\}$. We get a similar contradiction as above for the principal pivot $\PT_{\cH}(\mM)$.

Consider next the case when  $(\mM^\top x)_i=0$ but $\mM_{ji}=0$ for all $j\in [n]$, that is, if the $i$-th column of $\mM$ is $\0_n$. If the $i$-th row is also $\0_n^\top$, then we can again get a smaller counterexample by removing the $i$-th row and $i$-th column. Otherwise, let us pick an index $j\in [n]$ such that $\mM_{ij}\neq 0$. Consider a vector $y\in\R^{n}$ with $y_k=0$ for $k\notin\{i,j\}$, and $y_j=1$ (but $y_i$ arbitrary). For such a vector, $y\circ (\mM y)$ has two nonzero entries, namely $y_j(\mM y)_j=\mM_{jj}$, and $y_i(\mM y)_i=\mM_{ij} y_i$. Consequently, for a sequence of such vectors $y$ with $\mM_{ij} y_i\to -\infty$, we get $\hk(\mM,y)\to \infty$, implying that $\mM\notin\Pms$.

For the rest of the proof, we can assume that all entries of $x$ and $y\coloneqq\mM^\top x$ are nonzero. Thus, $x_i y_i<0$ for all $i\in [n]$. 
\begin{claim}
$\mM_{ij}= y_j/x_i$ for all $i\neq j$, and $\mM_{ii}=-(n-2) y_i/x_i$ for all $i\in [n]$.
\end{claim}
\begin{proof}
First, let us fix $i,j\in [n]$ with $i\neq j$. Let us assume $x_i>0$; the proof is analogous for $x_i<0$. Let us consider the vector $x'$ such that $x'_k=x_k$ for all $k\neq i$, and $ x'_i\ge 0$ is as small as possible subject to the constraint that $(\mM^\top x')_k\ge 0$ whenever $(\mM^\top x)_k>0$, and  $(\mM^\top x')_k\le 0$ whenever $(\mM^\top x)_k<0$. Clearly, $x'_i<x_i$, $x'\circ (\mM^\top x')\le \0_n$, and either $x'$ or $(\mM^\top x')$ has a zero entry. In case $x'\circ (\mM^\top x')$ has a strictly negative entry, such an $x'$ leads to a smaller counterexample or a contradiction as above. Hence, the only possible case is that $x'\circ (\mM^\top x')=\0_n$. Noting that $\mM_{ii}\ge 0$ (Lemma~\ref{lem:sufficient-props}\eqref{it:diagonal}) and $x'_k=x_k$ for $k\neq i$, $x'_i<x_i$, we see that $(\mM^\top x')_i<0$. Therefore, $x'_i=0$ must hold. Since $x'_j=x_j\neq 0$, it follows that $0=(\mM^\top x')_j=(\mM^\top x)_j-(\mM^\top)_{ji} x_i=y_j-\mM_{ij}x_i$. Thus, $\mM_{ij}=y_j/x_i$ follows.

The formula on the diagonal entries is now immediate. For each $i\in [n]$, $y_i=(\mM^\top x)_i=\mM_{ii} x_i +\sum_{j\neq i} (\mM^\top)_{ij} x_j=\mM_{ii} x_i+(n-1) y_i$, leading to the claimed $\mM_{ii}=-(n-2)y_i/x_i$.
\end{proof}
We now derive a contradiction to the column sufficiency of $\mM$. Let us define the vectors $x'\coloneqq 1/y$ and $y'\coloneqq 1/x$, that is, $x'_i=1/y_i$ and $y'_i=1/x_i$ for $i\in[n]$. By the above claim, we can see that $y'=\mM x'$. Thus, $x'\circ (\mM x')=x'\circ y'\lneq \0_n$, since the entries of this vector are $1/(x_i y_i)$.
\end{proof}

\begin{proof}[Proof of Theorem~\ref{thm:valiaho}]
The implication \eqref{it:PM}$\Rightarrow$\eqref{it:suff} was shown in Lemma~\ref{lem:p-star-suff}. Let us next show \eqref{it:suff}$\Rightarrow$\eqref{it:u-v}. For a contradiction, assume that  $\mM$ is sufficient, and yet \eqref{it:u-v} is violated. Take two violating vectors $u$ and $v$ with $v\circ (\mM v)\le \0_n$, $u\circ (\mM v)+ v\circ(\mM u)\le \0_n$. By column sufficiency, 
 $v\circ (\mM v)\le \0_n$ implies $v\circ (\mM v)=\0_n$. Consequently, $u$ and $v$ satisfy
\begin{equation}\label{eq:u-v-restate}
v\circ (\mM v)= \0_n\,\quad \mbox{and}\quad u\circ (\mM v)+ v\circ(\mM u)\lneq \0_n\, .
\end{equation}
Let us denote the index sets
\[\cI\coloneqq\{i\in[n]\,:\,v_i=0\, , (\mM v)_i\neq 0\}\, , \, \cJ\coloneqq\{i\in[n]\,:\,v_i=0\, , (\mM v)_i= 0\}\, ,\,\cK\coloneqq\{i\in[n]\,:\,v_i\neq 0\, , (\mM v)_i= 0\}\, .
\]
We must have $[n]=\cI\cup \cJ\cup \cK$ because of $v\circ (\mM v)=\0_n$.

We claim that both sufficiency as well as \eqref{eq:u-v-restate} are invariant under principal pivotal transformations. For sufficiency this is stated as Lemma~\ref{lem:principal-sufficient}, and for \eqref{eq:u-v-restate}, note that principal pivot transformations leave both vectors $v\circ (\mM v)$ and $u\circ (\mM v)+v\circ (\mM u)$ invariant. 
A principal pivoting $\PT_{\cJ}(\mM)$ replaces $u$ and $v$ by $((\mM u)_{\cJ},u_{\bar\cJ})$ and $((\mM v)_{\cJ},v_{\bar\cJ})$, respectively.

Among all equivalent matrices and vectors under principal pivoting, let us pick $(\mM,u,v)$ such that $\supp(v)$ is as small as possible.

\begin{claim} \label{claim:M_K,JK}
We have $\mM_{\cK,\cJ\cup \cK}=\0_{(|\cJ|+|\cK|)\times|\cK|}$.
\end{claim}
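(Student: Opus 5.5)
The plan is to use the minimality of $\supp(v)$ directly. Whenever $\mM_{\cK,\cJ\cup\cK}$ has a nonzero entry, we will exhibit a principal pivot that strictly shrinks the support of $v$. The key observation is that a principal pivot $\PT_{\cH}(\mM)$ replaces $v_{\cH}$ by $(\mM v)_{\cH}$ and leaves $v_{\bar\cH}$ unchanged. If $\cH\subseteq\cJ\cup\cK$, then $(\mM v)_{\cH}=\0$, so after the pivot all coordinates in $\cH$ become zero. Any $\cH$ that meets $\cK$ therefore strictly decreases $|\supp(v)|$. By the discussion preceding the claim, sufficiency and \eqref{eq:u-v-restate} are preserved under such pivots, so any admissible pivot of this kind contradicts the choice of $(\mM,u,v)$.

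\emph{Step 1: diagonal entries.} Let $k\in\cK$. If $\mM_{kk}\neq 0$, then $\PT_k(\mM)$ is defined. It sets the new $v_k$ to $(\mM v)_k=0$, while $v_k\neq 0$ before, and all other coordinates stay the same. The support drops, which is a contradiction. Hence $\mM_{kk}=0$ for all $k\in\cK$.

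\emph{Step 2: off-diagonal entries.} Let $k\in\cK$ and $j\in\cJ\cup\cK$ with $j\neq k$, and suppose $\mM_{kj}\neq 0$. Since $\mM_{kk}=0$ and $\mM$ is row sufficient, Lemma~\ref{lem:sufficient-props}\eqref{it:opposite-row} gives $\mM_{jk}\neq 0$ and $\mM_{kj}\mM_{jk}\le 0$, hence $\mM_{kj}\mM_{jk}<0$. For $\cH=\{k,j\}$ this yields
\[
\det(\mM_{\cH\cH})=\mM_{kk}\mM_{jj}-\mM_{kj}\mM_{jk}=-\mM_{kj}\mM_{jk}>0,
\]
so the principal pivot $\PT_{\cH}(\mM)$ is well-defined. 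After the pivot, the new $v$ has coordinates $(\mM v)_k=0$ and $(\mM v)_j=0$ on $\cH$, since $j\in\cJ\cup\cK$, and is unchanged elsewhere. Before the pivot, $v_k\neq0$, so the support strictly decreases, again contradicting minimality.

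Together, the two steps show that every entry $\mM_{kj}$ with $k\in\cK$ and $j\in\cJ\cup\cK$ vanishes. The only point requiring care is the invertibility of the $2\times2$ pivot block in Step 2. It relies on first establishing $\mM_{kk}=0$ in Step 1, so that the sign-pattern lemma for row sufficient matrices applies.
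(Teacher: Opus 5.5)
Your proposal is correct and follows the paper's proof essentially step for step. First you get $\mM_{kk}=0$ for $k\in\cK$ from the single pivot $\PT_k$, then you rule out $\mM_{kj}\neq 0$ via the $2\times 2$ pivot $\PT_{\cH}$ on $\cH=\{j,k\}$, which Lemma~\ref{lem:sufficient-props}\eqref{it:opposite-row} makes admissible; each pivot contradicts the minimality of $\supp(v)$. Your remark that $\det(\mM_{\cH\cH})>0$ rather than merely $\neq 0$ is correct but not needed.
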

\begin{proof}
First, let us show that $\mM_{kk}=0$ for all $k\in\cK$. Otherwise, if $\mM_{kk}\neq 0$, then we could replace $\mM$ by $\PT_{k}(\mM)$. This transformation implies replacing the nonzero entry $v_k$ in $v$ by $(\mM v)_k=0$, contradicting the choice of $(\mM,u,v)$ such that $\supp(v)$ is minimal.

Next, let $\mM_{kj}\neq 0$ for $k\in \cK$, $j\in \cJ\cup\cK$.  Since $\mM_{kk}=0$, Lemma~\ref{lem:sufficient-props}\eqref{it:opposite-row} implies $\mM_{jk}\neq 0$. Thus, for $\cH=\{j,k\}$, we have $\det(\mM_{\cH\cH})=-\mM_{jk}\mM_{kj}\neq 0$. Therefore, we can consider the principal pivot transform $\PT_{\cH}(\mM)$. This replaces the nonzero entry $v_k$ by $(\mM v)_k=0$, and $v_j$ by $(\mM v)_j=0$; this again contradicts the choice of $(\mM,u,v)$ with $\supp(v)$ minimal.
\end{proof}
Let us now define $\bar u\coloneqq(u_{\cI\cup \cK}, \0_{\cJ})$, that is, replace all entries $u_i$  by 0 for $i\in \cJ$. 
We claim that
\[
\bar u\circ (\mM v)+v\circ (\mM \bar u)= u\circ (\mM v)+v\circ (\mM  u)\lneq \0_n\, . 
\]
Indeed, $\bar u\circ(\mM v)= u\circ(\mM v)$, since $\bar u_\cI=u_\cI$ and $(\mM v)_{\cJ\cup\cK}=\0_{(|\cJ|+|\cK|)\times(|\cJ|+|\cK|)}$. Furthermore, $v \circ(\mM \bar u)= v\circ(\mM u)$, because $v_{\cI\cup\cJ}=\0_{|\cI|+|\cJ|}$ and $(\mM\bar u)_\cK=(\mM u)_\cK$ by Claim \ref{claim:M_K,JK}.

For $\varepsilon>0$, let us now consider the vector $z(\varepsilon)\coloneqq v+\varepsilon \bar u$. We have
\[
z(\varepsilon)\circ (\mM z(\varepsilon))=v\circ(\mM v)+\varepsilon (\bar u\circ (\mM v)+v\circ (\mM \bar u))+\varepsilon^2 \bar u\circ (\mM\bar u)=\varepsilon (\bar u\circ (\mM v)+v\circ (\mM \bar u))+\varepsilon^2 \bar u\circ (\mM\bar u)\, .
\]
We claim that for sufficiently small $\varepsilon>0$, $z(\varepsilon)\circ (\mM z(\varepsilon))\lneq \0_n$, a contradiction to the (column) sufficiency of $\mM$. As shown above, $\varepsilon (\bar u\circ (\mM v)+v\circ (\mM \bar u))\lneq \0_n$. For every $i\in [n]$ such that $\bar u_i (\mM \bar u)_i\le 0$, we get $z(\varepsilon)_i (\mM z(\varepsilon))_i\le 0$, with strict inequality if $\bar u_i(\mM v)_i+v_i(\mM \bar u)_i<0$. Assume now $\bar u_i (\mM \bar u)_i>0$. By the construction of $\bar u$, we must have $i\notin \cJ$. If $i\in \cI$, then 
 $\bar u_i (\mM v)_i+v_i (\mM \bar u)_i=\bar u_i (\mM v)_i<0$, since both $\bar u_i\neq 0$ and $(\mM v)_i\neq 0$. Similarly, if $i\in \cK$, then $\bar u_i (\mM v)_i+v_i (\mM \bar u)_i=v_i (\mM \bar u)_i<0$. Consequently, for sufficiently small $\varepsilon>0$, we have $z(\varepsilon)_i (\mM z(\varepsilon))_i<0$ for all such indices. This completes the proof of $z(\varepsilon)\circ (\mM z(\varepsilon))\lneq \0_n$, and in turn, the proof of \eqref{it:suff}$\Rightarrow$\eqref{it:u-v}.

\medskip

It is left to show \eqref{it:u-v}$\Rightarrow$\eqref{it:PM}. It suffices to show that if $\mM$ is not in $\Pms$, then $u$ and $v$ as in \eqref{it:u-v} exist. 
$\mM\notin\Pms$ is equivalent to the existence of a sequence of vectors $x^{(t)}\in\R^n$, $t=1,2,\ldots$ such that $\lim_{t\to\infty} \hk(\mM,x^{(t)})=\infty$. If $\hk(\mM,x^{(t)})=\infty$ for some $x^{(t)}$, then $x^{(t)}\circ (\mM x^{(t)})\lneq \0_n$. 
Thus, $v=x^{(t)}$ and $u=\0_n$ contradict \eqref{it:u-v}.

 For the rest, we can assume  $0<\hk(\mM,x^{(t)})<\infty$ for all $t$. In particular, $x^{(t)}\neq \0_n$, and we can assume they are normalized as $\|x^{(t)}\|_1=1$. By selecting a subsequence, we can also assume that for every $i\in [n]$, all $x_i^{(t)}$'s have the same sign, and that the partition $(\cI^+_\mM(x^{(t)}),\cI^0_\mM(x^{(t)}),\cI^-_\mM(x^{(t)}))$ is the same for every $t$; let us denote this common partition as $(\cI^+,\cI^0,\cI^-)$, and let $\cQ\coloneqq\cQ(\mM,\cI^+,\cI^-)$. Similarly as in the proof of Proposition~\ref{thm:kappa-bound-P}, we may assume that $x^{(t)}\ge0$ for all $t$.

 We now evoke Lemma~\ref{lem:two-basic} for each $x^{(t)}$ and $\cQ$. Since the number of basic feasible solutions is finite, we can again reduce to a subsequence where the same basic feasible solutions $u$ and $v$ to $\cQ$ give 
 that $\lim_{t\to\infty}\hk(\mM,u+\lambda_t v)\to \infty$ for coefficients $\lambda_t\in [0,1]$.
Thus, in Lemma~\ref{lem:u-v-cases} for $\mM$, case~\eqref{it:C-C} cannot hold.
Case~\eqref{it:C-u-v} shows that $(u,v)$ is as in \eqref{it:u-v}. Similarly, case~\eqref{it:C-u} and case~\eqref{it:C-v} show that $(u,0)$ and $(v,0)$ are respectively as in \eqref{it:u-v}.
\end{proof}

\section{Near-optimal rescalings of a matrix}\label{sec:near-optimal}
In this section, we investigate the optimized handicap number $\hks(\mM)$ defined in Definition~\ref{def:opt-handicap}.
We start by observing that column scalings cannot change the sets $\cI^+_\mM(x)$, $\cI^0_{\mM}(x)$, and $\cI^-_{\mM}(x)$:
\begin{lemma}\label{lem:rescale-preserve}
For a matrix $\mM\in\R^{n\times n}$ and $x\in\R^n$, and for any positive diagonal $\mD\in\Dp$, we have  $\cI^+_{\mD\mM}(x)=\cI^+_{\mM}(x)$, $\cI^0_{\mD\mM}(x)=\cI^0_\mM(x)$, and $\cI^-_{\mD\mM}(x)=\cI^-_\mM(x)$.
\end{lemma}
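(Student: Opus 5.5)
The plan is a direct, coordinate-wise computation. The sets $\cI^+_\mM(x)$, $\cI^0_\mM(x)$ and $\cI^-_\mM(x)$ depend only on the signs of the entries of $x\circ(\mM x)$: an index $i$ lies in $\cI^+$, $\cI^0$ or $\cI^-$ according to whether $x_i(\mM x)_i$ is positive, zero or negative. So it suffices to show that replacing $\mM$ by $\mD\mM$ multiplies each entry of this vector by a positive scalar.

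Write $\mD=\diag{d}$ with $d_i>0$ for all $i\in[n]$. Then $(\mD\mM x)_i=d_i(\mM x)_i$, and therefore
\[
x_i(\mD\mM x)_i=d_i\, x_i(\mM x)_i \qquad\mbox{for every } i\in[n].
\]
Since $d_i>0$, the numbers $x_i(\mD\mM x)_i$ and $x_i(\mM x)_i$ have the same sign: both positive, both zero, or both negative. Hence each index falls into the same class for $\mD\mM$ as for $\mM$, which gives the three claimed equalities simultaneously.

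There is no real obstacle here. The only point to stress is the positivity of $\mD$; with a merely nonsingular diagonal matrix, as allowed in Lemma~\ref{lem:diag-rescale-invariant}, a negative $d_i$ would swap index $i$ between $\cI^+$ and $\cI^-$.
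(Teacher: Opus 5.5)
Your proof is correct and follows the same route as the paper's: both rest on the identity $(\mD\mM x)_i=\mD_{ii}(\mM x)_i$ with $\mD_{ii}>0$, so each $x_i(\mD\mM x)_i$ has the same sign as $x_i(\mM x)_i$. Your remark that positivity of $\mD$ is essential, since a negative diagonal entry would swap an index between $\cI^+$ and $\cI^-$, is also accurate.
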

\begin{proof}
All three statements follow by noting that the  $(\mM x)_i$ and $(\mD\mM x)_i$ have the same signs for all $i\in [n]$. This is because $(\mM x)_i=\mM_i x$, and $(\mD\mM x)_i=(\mD\mM)_i x=\mD_{ii} \mM_i x$.
\end{proof}
For $\mM\in\R^{n\times n}$  and $\tau\ge0$, let us define the set
\[
\cR(\mM,\tau)\coloneqq\left\{d\in \Rp^n\mid
(1+4\tau)\sum_{i\in \cI^+_\mM(x)} d_i x_i(\mM x)_i +\sum_{i\in \cI^-_\mM(x)} d_i x_i(\mM x)_i\ge 0\quad\forall x\in \R^n, d\ge\1_n\right\}\, .
\]
We impose $d\ge \1_n$ to keep the entries strictly positive. Note that for any $\alpha>0$, $\hk\big(\!\diag{d}\mM\big
)=\hk\big(\!\diag{\alpha d}\mM\big)$. Thus, the set $\cR(\mM,\tau)$ is the intersection of a cone and the shifted nonnegative orthant $\{d\in\R^n: d\ge \1_n\}$.
\begin{lemma}\label{lem:rescale-convex}
    For any $\mM\in\R^{n\times n}$  and $\tau\ge0$, $d\in \cR(\mM,\tau)$ if and only if  $d\ge\1_n$ and $\hk\big(\!\diag{d}\mM\big)\le \tau$. Moreover, the set  $\cR(\mM,\tau)$ is convex.
\end{lemma}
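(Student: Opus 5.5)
The plan is to unfold the definition of $\hk(\diag{d}\mM,x)$ pointwise in $x$ and compare it with the defining inequality of $\cR(\mM,\tau)$. Note that the definition of $\cR(\mM,\tau)$ should be read as: $d\ge\1_n$ and the inequality holds for all $x\in\R^n$.

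Fix $d\ge \1_n$ and write $\mD=\diag{d}\in\Dp$. By Lemma~\ref{lem:rescale-preserve}, $\cI^\pm_{\mD\mM}(x)=\cI^\pm_\mM(x)$, and $x_i(\mD\mM x)_i=d_i x_i(\mM x)_i$. Hence, with $P(x)\coloneqq\sum_{i\in\cI^+_\mM(x)} d_ix_i(\mM x)_i\ge 0$ and $N(x)\coloneqq-\sum_{i\in\cI^-_\mM(x)} d_ix_i(\mM x)_i\ge0$, we have $x^\top\mD\mM x=P(x)-N(x)$. The defining inequality reads $(1+4\tau)P(x)\ge N(x)$. The key step is the pointwise equivalence
\[
\hk(\mD\mM,x)\le\tau \iff (1+4\tau)P(x)\ge N(x)\, .
\]
We check it by cases on the definition of $\hk$.
\begin{enumerate}[(i)]
\item If $P(x)-N(x)\ge 0$, then $\hk(\mD\mM,x)=0\le\tau$, and the inequality holds since $(1+4\tau)P\ge P\ge N$.
\item If $P<N$ and $\cI^+_\mM(x)=\emptyset$, then $\hk=\infty>\tau$. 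Here $P=0<N$, so the inequality fails.
\item If $P<N$ and $\cI^+_\mM(x)\neq\emptyset$, then $P>0$, since each term indexed by $\cI^+$ is strictly positive and $d>0$. In this case $\hk(\mD\mM,x)=N/(4P)-1/4$, and $\hk\le\tau$ is equivalent to $N\le(1+4\tau)P$ after multiplying by $4P>0$.
\end{enumerate}
Taking the supremum over $x$, we get $\hk(\mD\mM)=\sup_x\hk(\mD\mM,x)\le\tau$ if and only if the inequality holds for every $x$. This proves the first claim.

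For convexity, fix $x$. The index sets $\cI^\pm_\mM(x)$ and the numbers $x_i(\mM x)_i$ do not depend on $d$. So the constraint
\[
(1+4\tau)\sum_{i\in \cI^+_\mM(x)} d_i x_i(\mM x)_i +\sum_{i\in \cI^-_\mM(x)} d_i x_i(\mM x)_i\ge 0
\]
is a linear inequality in $d$ and defines a closed half-space. Therefore $\cR(\mM,\tau)$ is the intersection of the convex set $\{d\ge\1_n\}$ with a family of half-spaces indexed by $x\in\R^n$, and is convex.

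The only real subtlety is the degenerate case (ii), and more generally making sure $P>0$ whenever $\cI^+\neq\emptyset$. Both follow directly from $d>0$ and Lemma~\ref{lem:rescale-preserve}.
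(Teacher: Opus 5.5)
Your proof is correct and follows the same route as the paper. The paper also gets the characterization by unfolding the definition of the handicap number together with Lemma~\ref{lem:rescale-preserve}, and gets convexity by writing $\cR(\mM,\tau)$ as $\{d\ge\1_n\}$ intersected with a family of half-spaces that are linear in $d$; your case analysis just spells out the details the paper leaves implicit.
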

\begin{proof}
Convexity follows since
 every constraint is linear in the variables $d_{i}$, and the intersection of any number of halfspaces is a convex set. The first part of the statement follows by the definition of the handicap number and  Lemma~\ref{lem:rescale-preserve}.
\end{proof}
The following lemma asserts the stability of rescalings. 
\begin{lemma}\label{lem:d-close}
Let $d,d'\in \Rpp^n$ and $0<\delta_1\le\delta_2$ such that $\delta_1 d\le d'\le \delta_2 d$. Then, 
\[
\frac{\delta_1}{\delta_2}\left[1+4\hk\big(\!\diag{d}\mM\big)\right]\le 1+4\hk(\diag{d'}\mM)\le \frac{\delta_2}{\delta_1}\left[1+4\hk\big(\!\diag{d}\mM\big)\right]\, .
\]
Consequently, if $\tau\ge 1$ and $\bar d\in\cR(\mM,\tau)$, 
then for every  $d\in\R^n$ with $\|\bar d-d\|\le 1/4$, $d\in\cR(\mM,2\tau)$.
\end{lemma}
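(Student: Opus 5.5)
We prove the two-sided inequality pointwise in $x$ and then take suprema. Fix $x\in\R^n$. By Lemma~\ref{lem:rescale-preserve}, the index sets $\cI^\pm_{\diag{d}\mM}(x)$ and $\cI^\pm_{\diag{d'}\mM}(x)$ both coincide with $\cI^\pm_\mM(x)$; write $\cI^\pm$ for short. Define $P(d)\coloneqq\sum_{i\in\cI^+}d_ix_i(\mM x)_i\ge 0$ and $N(d)\coloneqq-\sum_{i\in\cI^-}d_ix_i(\mM x)_i\ge 0$. Each summand is nonnegative, so the hypothesis $\delta_1 d\le d'\le\delta_2 d$ gives
\[
\delta_1 P(d)\le P(d')\le\delta_2P(d)\quad\text{and}\quad \delta_1N(d)\le N(d')\le\delta_2N(d).
\]

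We then compare the two handicap values at $x$ case by case.
\begin{enumerate}[(i)]
\item If $P(d)>0$, then $P(d')>0$. If $\hk$ is given by the ratio formula, then $1+4\hk(\diag{d'}\mM,x)=N(d')/P(d')\le(\delta_2/\delta_1)\,N(d)/P(d)$.
\item If $x^\top\diag{d'}\mM x\ge0$, then $1+4\hk(\diag{d'}\mM,x)=1$. The right-hand bound still holds because $1\le\delta_2/\delta_1\le(\delta_2/\delta_1)[1+4\hk(\diag{d}\mM)]$.
\item If $P(d)=0$ and $N(d)>0$, then $\hk(\diag{d}\mM)=\infty$, so the upper bound is trivial. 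Note also that $P(d')=0$ and $N(d')>0$ in this case, consistently.
\end{enumerate}
In every case $1+4\hk(\diag{d'}\mM,x)\le(\delta_2/\delta_1)[1+4\hk(\diag{d}\mM)]$. Taking the supremum over $x$ gives the right inequality. The left inequality follows by symmetry: swap $d$ and $d'$, using $\delta_2^{-1}d'\le d\le\delta_1^{-1}d'$.

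For the consequence, let $\bar d\in\cR(\mM,\tau)$, so $\bar d\ge\1_n$ and $\hk(\diag{\bar d}\mM)\le\tau$. Let $\|\bar d-d\|\le1/4$. Each coordinate then satisfies $|d_i-\bar d_i|\le 1/4\le\bar d_i/4$, which gives $\tfrac34\bar d\le d\le\tfrac54\bar d$. Applying the first part with $\delta_1=3/4$ and $\delta_2=5/4$,
\[
1+4\hk(\diag{d}\mM)\le\tfrac53(1+4\tau).
\]
Hence $\hk(\diag{d}\mM)\le(5+20\tau-3)/12=(1+10\tau)/6\le2\tau$ whenever $\tau\ge1$; in fact $\tau\ge 1/2$ suffices.

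There is one subtlety. The statement presumably intends $d\in\cR(\mM,2\tau)$, which requires $d\ge\1_n$, but $d$ may have entries slightly below $1$, down to $3/4$. I would handle this by working with the cone version: since $\hk$ is invariant under positive scaling of $d$, the scaled vector $\tfrac43 d\ge\1_n$ belongs to $\cR(\mM,2\tau)$. Equivalently, $d$ satisfies all the defining linear inequalities of $\cR(\mM,2\tau)$ apart from the normalization $d\ge\1_n$. I expect this normalization mismatch to be the main point needing care. The inequality work above is routine once Lemma~\ref{lem:rescale-preserve} fixes the index sets.
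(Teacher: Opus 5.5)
Your proof is correct and follows essentially the same route as the paper. You fix the index sets via Lemma~\ref{lem:rescale-preserve}, compare the ratio termwise using $\delta_1 d\le d'\le\delta_2 d$ (with the lower bound by symmetry), and for the consequence obtain $\tfrac34\bar d\le d\le\tfrac54\bar d$ from $\bar d\ge\1_n$. Two of your points go beyond the paper's proof and are valid: the explicit case analysis for when the ratio formula does not apply (where you implicitly use $N(d)/P(d)\le 1+4\hk(\diag{d}\mM,x)$ whenever $P(d)>0$), and your observation that $d$ itself may violate $d\ge\1_n$, so that in general only $\tfrac43 d$ lies in $\cR(\mM,2\tau)$.
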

\begin{proof}
Let $\mD=\diag{d}$ and $\mD'=\diag{d'}$.
Let us show the upper bound on $1+4\hk(\mD'\mM)$; the lower bound follows analogously.  For any $x\in\R^n$, $\cI^+_{\mD\mM}(x)=\cI^+_{\mD'\mM}(x)$ and $\cI^-_{\mD\mM}(x)=\cI^-_{\mD'\mM}(x)$  by Lemma~\ref{lem:rescale-preserve}; let us use the shorthands $\cI^+$ and $\cI^-$, respectively.
 Then,
\[
1+4\hk(\mD'\mM,x)=\frac{-\sum_{i\in \cI^-}x_i (\mM x)_i d'_i}{\sum_{i\in \cI^+}x_i (\mM x)_i d'_i}\le\frac{-\sum_{i\in \cI^-}x_i (\mM x)_i \delta_2 d_i}{\sum_{i\in \cI^+}x_i (\mM x)_i \delta_1 d_i}= \frac{\delta_2}{\delta_1} (1+4\hk(\mD\mM,x))\, ,
\]
implying the upper bound. The last part of the statement follows since $\bar d\ge \1$ is required for $\bar d\in\cR(\mM,\tau)$, and therefore $\frac{3}{4}\bar d\le d\le \frac{5}{4}\bar d$. Thus, by the first part, $1+4\hk(\diag{d}\mM)\le \frac{5}{3}(1+4\hk(\diag{\bar d}\mM))\le \frac{5}{3}(1+4\tau)<1+4\cdot 2\tau$ since $\tau\ge 1$.
\end{proof}
For the algorithm in Section~\ref{sec:alg}, we will need the following bit-complexity bounds on optimal rescalings; the technical proof of this is deferred to Section~\ref{sec:d-condense}.
\begin{restatable}{theorem}{rescbound}\label{lem:rescaling-bound}
Let $n\ge 3$, $\mM\in\Q^{n\times n}$ with $L=\length{\mM}$, and $\tau\ge 1$ such that $\cR(\mM,\tau)\neq\emptyset$.  Then, there exists 
 a rescaling $\bar d\in\cR(\mM,8n^2\tau)$ with $\bar d_i\in [1,\tau^{2n^2} 2^{O(n^2L)}]$ for all $i\in [n]$.
 \end{restatable}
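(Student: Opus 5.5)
The plan is a ``gap-condensing'' argument. Take any $d\in\cR(\mM,\tau)$ and rescale so that $\min_i d_i=1$, which is allowed since $\cR(\mM,\tau)$ is a cone intersected with $\{d\ge\1_n\}$. Sort the coordinates as $d_{\pi(1)}\le\dots\le d_{\pi(n)}$. Fix a threshold $G=\tau^{2n}2^{O(nL)}$, to be chosen below. Whenever some consecutive ratio $d_{\pi(k+1)}/d_{\pi(k)}$ exceeds $G$, divide all coordinates $d_{\pi(k+1)},\dots,d_{\pi(n)}$ (the ``high'' set $\cT$) by a common factor, so that the ratio becomes exactly $G$. The low set $\cS$ is left unchanged. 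After at most $n-1$ such compressions, every consecutive ratio is at most $G$. Hence $\max_i \bar d_i\le G^{n}=\tau^{2n^2}2^{O(n^2L)}$, and $\bar d\ge \1_n$ still holds. It remains to control the handicap, i.e.\ to show $\bar d\in\cR(\mM,8n^2\tau)$.

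To do this I will reduce to finitely many test vectors. By Lemma~\ref{lem:rescale-preserve}, $\diag{d}\mM$ has the same sign sets as $\mM$. First apply the reduction $x\ge\0_n$ via Lemma~\ref{lem:diag-rescale-invariant}: a $\pm1$ diagonal $\mD$ commutes with $\diag{d}$. Then Lemma~\ref{lem:two-basic}, applied to $\diag{\bar d}\mM$, shows it suffices to bound $\hk(\diag{\bar d}\mM,u+\lambda v)$ by roughly $4\tau$ (up to constants). Here $u,v$ are basic feasible solutions of $\cQ(\mD\mM\mD,\cI^+,\cI^-)$, and $\lambda\in[0,1]$. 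Note that these polytopes do not depend on the row scaling, since the constraints $\mM_i x\gtrless0$ are scale invariant. This is where the factor $(n+1)^2\le 2n^2$ comes from, and together with the factor $4$ this yields $8n^2\tau$. For such $u,v$, Lemma~\ref{lem:bfs-delta} gives the following: every nonzero product $u_i(\mM u)_i$, $u_i(\mM v)_i+v_i(\mM u)_i$, $v_i(\mM v)_i$ is a rational of encoding length $O(nL)$. Each is therefore either $0$ or of absolute value in $[2^{-O(nL)},2^{O(nL)}]$. The condition for $z=u+\lambda v$ is a quadratic inequality in $\lambda$ whose coefficients are linear in $d$:
\[(1+4\tau)\sum_{i\in\cI^+}d_i\,q_i(\lambda)+\sum_{i\in\cI^-}d_i\,q_i(\lambda)\ge0 .\]

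The key step, and the main obstacle, is to show that a single compression across a gap larger than $G$ preserves this inequality up to a constant factor in $\tau$. Split each side into its $\cS$-part and its $\cT$-part. Because the gap exceeds $G$, the $\cT$-part of the original inequality must itself be ``almost'' valid. More precisely, if the $\cT$-terms violated the $\tau$-inequality at some $(u,v,\lambda)$, that violation would be at least $2^{-O(nL)}$ times the smallest $d$ in $\cT$, for a suitable $\lambda$. This needs care when $\lambda$ makes the quadratic nearly vanish; I would handle it by checking the three coefficient levels separately, as in the case analysis of Lemma~\ref{lem:u-v-cases}. The $\cS$-terms are at most $n2^{O(nL)}(1+4\tau)\,d_{\pi(k)}$ in absolute value, so they could not compensate such a violation once $G\ge\tau\, 2^{\Omega(nL)}$. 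Consequently the $\cT$-part satisfies the inequality with parameter $\tau$ on its own, up to the $\cS$-perturbation. Symmetrically, the $\cS$-part satisfies it whenever the $\cT$-contributions vanish. After compressing $\cT$ to ratio exactly $G$, the $\cT$-part still dominates any negative $\cS$-contribution whenever it is nonzero. The result is the inequality with $\tau$ replaced by a constant multiple of $\tau$. Iterating over at most $n$ gaps must not compound the loss. To ensure this, I would compress all large gaps simultaneously, and choose $G$ with the extra $\tau^{2n}$ headroom so that the domination argument is applied level by level. That headroom is the source of the $\tau^{2n^2}$ in the statement. The assumption $n\ge3$ only serves to absorb constants into $8n^2$.
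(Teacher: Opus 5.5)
\paragraph{Where the proposal fails.} Your setup agrees with the paper. That covers normalizing $\min_i d_i=1$ and the compression giving $G^n=\tau^{2n^2}2^{O(n^2L)}$. It also covers the reduction via Lemma~\ref{lem:two-basic} to bounding $\hk(\diag{\bar d}\mM,u+\lambda v)$ for basic feasible $u,v$ and $\lambda\in[0,1]$, using that $\cQ$ is unchanged by row scaling. The gap is the step you flag as the main obstacle, and your sketch for it does not go through.

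First, you never use that $\mM$ is block upper triangular across a large gap. The paper gets this from Lemma~\ref{lem:strongly-connected}: $\mM_{ij}\neq 0$ forces $d_i\le \tau 2^{O(L)}d_j$. This structure is what lets one apply $\hk(\diag{d}\mM)\le\tau$ to the truncated vectors $u^{(\ell)},v^{(\ell)},x^{(\ell)}$. The upshot is that each level's negative terms must be paid for by that level and the levels above it. With only the single $d$-inequality at the same $x$, you cannot localize the compensation.

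Second, your claim that the compressed $\cT$-part dominates any negative $\cS$-contribution whenever it is nonzero fails for small $\lambda$. The $\cT$-terms may be purely $\alpha_2\lambda^2$. Meanwhile the $\cS$-part can have an order-$0$ surplus $\epsilon>0$ and an order-$1$ deficit $e_1>0$. Since the entries of $d$ are arbitrary reals, $\epsilon$ is not bounded below relative to $e_1$. For the same reason, your assertion that a violation is at least $2^{-O(nL)}\min_{i\in\cT}d_i$ does not hold in general. Validity for $d$ only gives roughly $e_1^2\le 4\epsilon(1+4\tau)\alpha_2$, and this need not survive dividing $\alpha_2$ by the compression factor. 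Comparing the three coefficient levels separately does not catch this. The failure comes from the interaction of the order-$0$ and order-$1$ coefficients at small intermediate $\lambda$.

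\paragraph{What the paper does instead.} It uses ingredients absent from your plan.
\begin{itemize}
\item Case II(a): an order-$0$ positive term in the same block supplies extra slack $\bar d_k u_k(\mM u)_k$, hence the factor $2+4\tau$. This slack beats that block's order-$\lambda$ terms once $\lambda<\delta^2/(3n^2\Delta K_0^n)$.
\item Case II(b): when no such term exists on $S_{\ge\ell}$, column sufficiency and Lemma~\ref{lem:cross-product} are applied to the truncations.
\item Case I: for larger $\lambda$, the inter-block ratio $K=20n^7K_0^{2n}\Delta^3/\delta^6$ makes any nonzero higher-level positive term dominate.
\end{itemize}
This is why the paper works with two scales. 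Blocks are cut at the small threshold $K_0=\tau 2^{O(L)}$, so each block has spread at most $K_0^n$. Then every inter-block ratio is reset to the much larger $K$, which sometimes enlarges gaps.

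Your single threshold $G$ keeps all gaps up to $G$ and shrinks larger ones to exactly $G$. If your levels are the runs between compressed gaps, their internal spread can be $G^{n-1}$ against a separation of only $G$. If they are the $K_0$-blocks, some separations are barely above $K_0$. Either way you cannot arrange the needed trade-off, namely a gap much larger than the square of the within-level spread. The $\tau^{2n}$ headroom you mention does not supply it.
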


\subsection{Example of large optimized handicap number}\label{sec:large-kappa}

\largekappahat*
\begin{proof}
 Since all principal minors of the matrix $\mM$ are positive, $\mM$ is a $\cP$-matrix. Therefore, it is also sufficient.
We now turn to the bound on $\hks(\mM)$.
Consider any rescaling $\mD$ with diagonal entries $(d_1,d_2,d_3)$. Since scaling up all entries does not change $\hk(\mD\mM)$, we may assume $d_1,d_2,d_3\ge 1$. Thus, 
\[
\mD\mM=\begin{pmatrix}
d_1 & \alpha d_1 & -d_1 \\
-d_2 & d_2 & \alpha d_2 \\
\alpha d_3 & -d_3 & d_3
\end{pmatrix}
\]
Assume first that $d_1\ge d_2$, and consider the vector $x=(1,-1,0)$. Then, $x_1(\mM x)_1=d_1(1-\alpha)\le d_2(1-\alpha)<0$, and $x_2(\mM x)_2=2d_2>0$, $x_3(\mM x)_3=0$. This implies  
\[
1+4\hk(\mD\mM,x)\ge \frac{\alpha-1}2\, , 
\]
and consequently $\hk(\mD\mM,x)\ge \frac{\alpha-3}{8}$ as claimed
Similarly, if $d_2\ge d_3$, then we get the same bound for $x=(0,1,-1)$, and if $d_3\ge d_1$, then for $x=(-1,0,1)$. Since at least one of these three cases must hold, we get $\hk(\mD\mM)\ge \frac{\alpha-3}{8}$.
\end{proof}

\subsection{Bounding the rescaling coefficients}\label{sec:d-condense}

In this section, we prove Theorem~\ref{lem:rescaling-bound}.
Let us start by defining an auxiliary directed graph $G(\mM)=(V,E)$ for a matrix $\mM\in\R^{n\times n}$. The node set is $V=[n]$, and for $i\neq j$, $(i,j)\in E$ if and only if $\mM_{ij}\neq 0$. 

\begin{lemma}\label{lem:strongly-connected}
Let $\mM\in\Q^{n\times n}$ with $L=\length{\mM}$ and $\tau\ge 0$. If $d\in \cR(\mM,\tau)$ and $(i,j)\in E$ (i.e., $\mM_{ij}\neq 0$), then 
\[d_i \le \tau 2^{O(L)}d_j\, \]
\end{lemma}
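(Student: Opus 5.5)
Since $d\in\cR(\mM,\tau)$, Lemma~\ref{lem:rescale-convex} gives $\hk(\diag{d}\mM)\le\tau$. Hence for every test vector $x$ we have
\[
(1+4\tau)\sum_{k\in\cI^+_\mM(x)} d_k x_k(\mM x)_k \;\ge\; -\sum_{k\in\cI^-_\mM(x)} d_k x_k(\mM x)_k .
\]
The plan is to choose an $x$ supported on $\{i,j\}$ for which coordinate $i$ is negative and coordinate $j$ is the only positive one. The inequality then reads $d_i\cdot(\text{something of size }2^{-O(L)})\le (1+4\tau)\,d_j\cdot(\text{something of size }2^{O(L)})$, which rearranges to $d_i\le \tau 2^{O(L)}d_j$ (absorbing the $1+4\tau$ into the constant; if $\tau<1$ we read the bound as $(1+4\tau)2^{O(L)}$, which suffices in the intended use $\tau\ge1$).

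Concretely, take $x=e^j+t e^i$ with $t=-\mathrm{sgn}(\mM_{ij})\,s$ for some $s>0$. Then
\[
x_i(\mM x)_i=t\mM_{ij}+t^2\mM_{ii}=-s|\mM_{ij}|+s^2\mM_{ii},\qquad x_j(\mM x)_j=\mM_{jj}+t\mM_{ji},
\]
and $x_k(\mM x)_k=0$ for all other $k$. First assume $\mM_{ii}>0$. Choose $s=|\mM_{ij}|/(2\mM_{ii})$, so that $x_i(\mM x)_i=-|\mM_{ij}|^2/(4\mM_{ii})<0$; its absolute value is at least $2^{-O(L)}$ since all entries are rationals of encoding length at most $L$. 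Meanwhile $|x_j(\mM x)_j|\le|\mM_{jj}|+s|\mM_{ji}|\le 2^{O(L)}$. If $x_j(\mM x)_j\le 0$, then $\cI^+_\mM(x)=\emptyset$ while the negative part is nonzero, so the inequality is violated, contradicting $d\in\cR(\mM,\tau)$. Therefore $j\in\cI^+$, and the inequality gives $d_i|\mM_{ij}|^2/(4\mM_{ii})\le(1+4\tau)d_j 2^{O(L)}$, which is the claim.

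If $\mM_{ii}=0$, we instead let $s$ range freely and exploit it. We have $x_i(\mM x)_i=-s|\mM_{ij}|$. Fix $s=1$; then $x_j(\mM x)_j=\mM_{jj}-\mathrm{sgn}(\mM_{ij})\mM_{ji}$, which has magnitude at most $2^{O(L)}$. By the same argument as above, this quantity must be positive, and the inequality yields $d_i|\mM_{ij}|\le(1+4\tau)d_j2^{O(L)}$. (Lemma~\ref{lem:sufficient-props} is consistent with this, since it forces $\mM_{ij}\mM_{ji}\le0$ so that the $j$-term is indeed nonnegative, but we do not actually need it.)

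The main obstacle is the bookkeeping of the encoding-length bounds: lower-bounding $|\mM_{ij}|$ and $|\mM_{ij}|^2/\mM_{ii}$ by $2^{-O(L)}$, and upper-bounding $s$ and the $j$-term by $2^{O(L)}$. All of this is routine from Lemma~\ref{lem:encoding-length}, because every quantity involved is a product or quotient of at most three matrix entries.
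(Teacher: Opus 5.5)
Your argument is correct and is essentially the paper's: a test vector supported on $\{i,j\}$ with $x_i(\mM x)_i<0$ forces $j\in\cI^+_\mM(x)$, and comparing the two weighted terms gives $d_i\le(1+4\tau)2^{O(L)}d_j$. Your caveat about $1+4\tau$ versus $\tau$ applies equally to the paper's own proof.

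The paper avoids your case split by taking $x_i=1$ and $x_j=-(\mM_{ii}+1)/\mM_{ij}$, which gives $x_i(\mM x)_i=-1$ whatever the sign of $\mM_{ii}$. Your split omits the case $\mM_{ii}<0$, but your $s=1$ computation covers it unchanged, since then $x_i(\mM x)_i=-|\mM_{ij}|+\mM_{ii}\le-|\mM_{ij}|$.
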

\begin{proof}
Let us define the vector $x\in \R^n$ with
\[
    x_k\coloneqq\begin{cases} 1\, ,&\mbox{if }k=i\, ,\\
    -\frac{\mM_{ii}+1}{\mM_{ij}}\, ,&\mbox{if }k=j\, ,\\
    0\, ,&\mbox{otherwise}\, .
    \end{cases}
\]
Thus, $x_i (\mM x)_{i}=-1$ and $x_k (\mM x)_{k}=0$ for $k\notin\{i,j\}$. The assumption $d\in\cR(\mM,\tau)$ means that we must have $x_j (\mM x)_j>0$ and moreover
\[
    (1+4\tau) d_j x_j (\mM x)_j+d_i x_i (\mM x)_{i}\ge 0\, ,
\]
implying
\[
    {d_i}{}\le (1+4\tau) d_j x_j (\mM x)_j= (1+4\tau) d_j \left(-\frac{\mM_{ii}+1}{\mM_{ij}}\cdot\mM_{ji}+\mM_{jj}\left(\frac{\mM_{ii}+1}{\mM_{ij}}\right)^2  \right)\le 
    \tau 2^{O(L)}d_j\, .
\]
Here, we use that since $\mM_{ij}\neq 0$, we must have ${\mM}_{ij}>2^{-L}$, and $\|\mM\|_\infty\le 2^L$; we also make use of  Lemma~\ref{lem:encoding-length}.
\end{proof}

\rescbound*
\begin{proof}
Let $d\in \cR(\mM,\tau)$, i.e., $\hk(\diag{d}\mM)\le \tau$. Noting that $\alpha d\in\cR(\mM,\tau)$ for any $\alpha>0$, we can assume that the smallest component of $d$ equals 1. Let us rearrange the indices such that $1=d_1\le d_2\le \ldots \le d_n$. 
Let $K_0\coloneqq \tau 2^{O(L)}$ be the bound guaranteed in  Lemma~\ref{lem:strongly-connected}, 
 and let us partition the index set $[n]$ into consecutive blocks $S_1,S_2,\ldots,S_t$ by starting a new block at every index $\ell\in \{2,\ldots,n\}$ for which $d_{\ell}>K_0 d_{\ell-1}$. 
If $t=1$, then  $d_{\ell}\le K_0 d_{\ell-1}$ for every $\ell=2,\ldots,n$, and therefore $\bar d=d$ satisfies the stronger bound $\bar d_i\in [1,\tau^n 2^{O(n L)}]$ for all $i\in [n]$.

For the rest of the proof, assume $t>1$. 
According to Lemma~\ref{lem:strongly-connected}, $\mM_{ij}=0$ whenever  $i\in S_{\ell}$, $j\in S_{\ell'}$ for $\ell'<\ell$. Thus, $\mM$ has blocks corresponding to $S_1,\ldots,S_t$ with all entries 0 below the blocks. Let us also use the notation
$S_{>\ell}\coloneqq\cup_{\ell'>\ell} S_\ell$ and $S_{\ge\ell}\coloneqq\cup_{\ell'\ge\ell} S_\ell$.

Let us pick $2^{-O(nL)}<\delta<\Delta<2^{O(nL)}$ as in Lemma~\ref{lem:bfs-delta}, and define
\[
    K\coloneqq 20n^7\cdot K_0^{2n} \cdot \frac{\Delta^3}{\delta^6}\le \tau^{2n} 2^{O(nL)}\,  .
\]
We define the modified rescaling $\bar d\in\R^n$ by $\bar d_1=1$ and
\[
  \frac{\bar d_{i+1}}{\bar d_i}=\begin{cases}
  \frac{d_{i+1}}{d_i}\, ,&\quad\mbox{if $i+1$ and $i$ are in the same component $S_\ell$,}\\
  K   \, ,&\quad\mbox{if $i+1$ and $i$ are in different components.}
  \end{cases}
\]
Thus, $\bar d_i\in [1,\tau^{2n^2} 2^{O(n^2 L)}]$ for all $i\in [n]$. As it will be used later, we note that
\begin{equation}\label{eq:layer-bound}
\bar d_i\ge \frac{\bar d_j}{K_0^n}\, , \quad \forall \ell\in [t]\, ,\, \forall i\in S_{\ge \ell}\, ,\, \forall j\in S_{\ell}\, ,
\end{equation}
because $\bar d_{j+1}\le K_0\bar d_j$ whenever $j,j+1\in S_{\ell}$ by the definition of the layers, and since $|S_{\ell}|\le n$.

 The rest of the proof shows that for any diagonal matrix $\mH\in\cD^n$ with $\mH_{ii}\in \{\pm 1\}$,  any $\cI^-,\cI^+\subseteq [n]$, and any two basic feasible solutions $u,v$ to $\cQ(\mH\mM\mH,\cI^+,\cI^-)$, and $\lambda\in [0,1]$, we have
\begin{equation}\label{eq:u-v-rescaled}
\hk(\diag{\bar d}\mM, \mH u+\lambda \mH v)\le 2\tau\, .
\end{equation}
This inequality for $\mH=\mI_n$, along with 
Lemma~\ref{lem:two-basic}, $\tau\ge 1$ and $n\ge 3$, guarantees that 
$\hk(\diag{\bar d}\mM,x)\le (n+1)^2(2\tau+1)<8n^2\tau$ for any $x\ge0$. For general $x\in\R^n$, let $\mH_{ii}=1$ if $x_i\ge 0$ and $\mH_{ii}=1$ if $x_i<0$. Then, $\mH x\ge 0$, and thus \eqref{eq:u-v-rescaled} implies $\hk(\diag{\bar d}\mM,\mH x)\le (n+1)^2(2\tau+1)<8n^2\tau$ for any $x\in\R^n$.

To prove \eqref{eq:u-v-rescaled}, we can assume without loss of generality that $\mH=\mI_n$. For each $\ell \in [k]$, let $\cI^-_\ell\coloneqq \cI^-\cap S_\ell$, $\cI^-_{>\ell}\coloneqq \cI^-\cap S_{>\ell}$, $\cI^+_\ell\coloneqq \cI^+\cap S_\ell$, $\cI^+_{>\ell}\coloneqq \cI^+\cap S_{>\ell}$.
Let $x=u+\lambda v\ge 0$. Note that $\cI^+_{\mM}(x)=\cI^+_{\mM}(u)\cup \cI^+_{\mM}(v)\subseteq \cI^+$ and $\cI^-_{\mM}(x)=\cI^-_{\mM}(u)\cup \cI^-_{\mM}(v)\subseteq \cI^-$, and that these inclusions may be strict.

We show that for any $\ell \in [k]$,
\begin{equation}\label{eq:block-statement}
   (2+ 4\tau) \sum_{i\in \cI^+_\ell}\bar d_i x_i (\mM x)_i +\frac{2}{n} \sum_{i\in \cI^+_{>\ell}}\bar d_i x_i (\mM x)_i\ge -\sum_{i\in \cI^-_\ell}\bar d_i x_i(\mM x)_i
\end{equation}
Summing up these inequalities for all $\ell\in [k]$, we get 
\[
   (4+ 4\tau)   \sum_{i\in \cI^+}\bar d_i x_i (\mM x)_i \ge -\sum_{i\in \cI^-}\bar d_i x_i (\mM x)_i\, ,
\]
which implies \eqref{eq:u-v-rescaled} since $4+4\tau<1+8\tau$ by $\tau\ge 1$.

Let us also use the notation $u^{(\ell)}\in \R^n$ for $\ell \in [k]$ defined as 
\[
    u^{(\ell)}_{i}\coloneqq \begin{cases} u_i\, ,& \mbox{if}\ \ i\in S_{\ge \ell}\, ,\, \\
    0\, ,& \mbox{otherwise}\, ,
    \end{cases}
\]
i.e., setting  all entries outside  $S_{\ge \ell}$ to 0. We analogously define  $v^{(\ell)}\in \R^n$ and $x^{(\ell)}\coloneqq u^{(\ell)}+\lambda v^{(\ell)}$.
 Note that the block structure of the matrix implies that
\begin{equation}\label{eq:restrict}
x^{(\ell)}_i=x_i\, ,\quad 
 (\mM x^{(\ell)})_i =  (\mM x)_i \quad \forall i\in S_{\ge\ell}\, ,
\end{equation}
If $x_i (\mM x)_i=0$ for all $i\in \cI^+_{>\ell}$, then $\hk(\diag{d} \mM,x^{(\ell)})\le \hk(\diag{d}\mM)\le\tau$ and \eqref{eq:restrict} imply
\[
     (1+4\tau) \sum_{i\in \cI^+_\ell} d_i x_i (\mM x)_i \ge  -\sum_{i\in \cI^-_\ell} d_i x_i (\mM x)_i\, .
\]
Now,  \eqref{eq:block-statement} follows by noting that $\bar d_i=\alpha_\ell d_i$ for all $i\in S_{\ell}$ for some $\alpha_\ell>0$.
The rest of the proof distinguishes  two cases.

\paragraph{Case I: $\lambda=0$ or $\lambda \in [\delta^2/(3n^2\Delta K_0^n),1]$.} Lemma~\ref{lem:bfs-delta} implies that for each $i\in [n]$, either $x_i=0$ or $x_i\in [\delta^3/(3n^2\Delta K_0^n),2]$, and either $(\mM x)_i=0$ or $|(\mM x)_i|\in [\delta^3/(3n^2\Delta K_0^n),2\Delta]$. 
Note that these bounds are applicable both for $\lambda=0$ (in that case, with stronger guarantees $x_i\in \{0\}\cup [\delta,1]$ and $|(\mM x)_i|\in \{0\}\cup [\delta,\Delta]$) and for the case $\lambda\in [\delta^2/(3n\Delta K_0^n),1]$.

We already covered the case when $x_k(\mM x)_k=0$ for all $k\in \cI^+_{>\ell}$. Hence, assume $x_k (\mM x)_k>0$ for some $k\in \cI^+_{>\ell}$. For any $j\in S_\ell$, we have $\bar d_k\ge K \bar d_j$, and therefore
\begin{equation}\label{eq:from-upper-layer}
   \frac{1}{n} \bar d_k x_k (\mM x)_k \ge K \bar d_j\cdot\frac{\delta^6}{9n^5\Delta^2 K_0^{2n}}  \ge  2n^2 \bar d_j \Delta \ge  \bar d_j n |x_j (\mM x)_j|\, , \quad\forall j\in S_{\ell}
\end{equation}
and  \eqref{eq:block-statement} follows by picking $j\in \cI^-_\ell$ with $\bar d_j |x_j (\mM x)_j|$ maximal, and bounding
\[
-\sum_{i\in \cI^-_\ell}\bar d_i x_i(\mM x)_i\le n  \bar d_j |x_j (\mM x)_j|\le  \frac{1}{n} \bar d_k x_k (\mM x)_k < \frac{2}{n} \sum_{i\in \cI^+_{>\ell}}\bar d_i x_i (\mM x)_i\, .
\]

\paragraph{Case II: $\lambda \in (0,\delta^2/(3n^2\Delta K_0^n))$.}
Note that
\[
    x_i(\mM x)_i=u_i(\mM u)_i+ \lambda v_i (\mM u)_i+ \lambda u_i (\mM v)_i+ \lambda^2 v_i (\mM v)_i\, .
\]
First, assume $u_i (\mM u)_i>0$ for some  $i\in \cI^+_{>\ell}$. Recall that $\cI^+_{\mM}(x)=\cI^+_{\mM}(u)\cup \cI^+_{\mM}(v)\subseteq \cI^+$, and hence $x_i (\mM x)_i\ge u_i (\mM u)_i$. Further, by  Lemma~\ref{lem:bfs-delta}, $u_i (\mM u)_i\ge \delta^2$.
Then, \eqref{eq:block-statement} follows similarly as in \eqref{eq:from-upper-layer}: let us select  $j\in \cI^-_{\ell}$ with $\bar d_j |x_j (\mM x)_j|$ maximal. Using also that $\bar d_k\ge K\bar d_j$,
\[
       \frac{1}{n} \bar d_i x_i (\mM x)_i \ge K \bar d_j\cdot\delta^2  >  4n\bar d_j \Delta \ge n \bar d_j |x_j (\mM x)_j|\ge \sum_{i\in \cI^-} \bar d_i |x_i (\mM x)_i|\, .
\]
For the rest, we distinguish two subcases.
\paragraph{Case II(a):
$u_i (\mM u)_i= 0$ for all $i\in \cI^+_{>\ell}$  but $u_k (\mM u)_k>0$ for some $k\in \cI^+_{\ell}$.} Thus, $\hk(\diag{d} \mM,u^{(\ell)})\le\tau$, \eqref{eq:restrict}, and the fact that $\bar d_j=\alpha_\ell d_j$ for all $j\in S_\ell$ implies
\begin{equation}\label{eq:case-ii-a}
     (1+4\tau) \sum_{i\in \cI^+_\ell} \bar d_i u_i (\mM u)_i \ge  -\sum_{j\in \cI^-_\ell} \bar d_j u_j (\mM u)_j\, .
\end{equation}
Our next goal is to show that 
\begin{equation}\label{eq:u-dominates-lambda}
     \bar d_k u_k (\mM u)_k \ge  -\sum_{j\in \cI^-_\ell} \bar d_j \left(\lambda u_j (\mM v)_j+\lambda v_j (\mM u)_j+ \lambda^2 v_j (\mM v)_j\right)\, .
\end{equation}
To show \eqref{eq:u-dominates-lambda}, recall from \eqref{eq:layer-bound} that for each $i,j\in S_{\ell}$, $\bar d_i\ge \bar d_j/K_0^n$. Using Lemma~\ref{lem:bfs-delta} and the Case II upper bound on $\lambda$,  we get that for any $j\in \cI^-_\ell$,
\[
\frac{1}{n}   \bar d_k u_k (\mM u)_k \ge \frac{1}{n}\bar d_i \delta^2 \ge \bar d_j \frac{\delta^2}{nK_0^n}    >  \bar d_j \cdot 3\Delta  \lambda \ge \bar  d_j \left(\lambda |u_j (\mM v)_j|+\lambda |v_j (\mM u)_j|+ \lambda^2 |v_j (\mM v)_j|\right) \, .
\]
Then, \eqref{eq:u-dominates-lambda} follows by suming up over all $j\in\cI^{-}_\ell$.
We can now derive \eqref{eq:block-statement} using \eqref{eq:case-ii-a} and \eqref{eq:u-dominates-lambda} as
\[
\begin{aligned}
   (2+ 4\tau) \sum_{i\in \cI^+_\ell}\bar d_i x_i (\mM x)_i +\frac{2}{n} \sum_{i\in \cI^+_{>\ell}}
   \bar d_i x_i (\mM x)_i &\ge (2+ 4\tau) \sum_{i\in \cI^+_\ell}\bar d_i u_i (\mM u)_i\\
   &\ge (1+ 4\tau) \sum_{i\in \cI^+_\ell}\bar d_i u_i (\mM u)_i + \bar d_k u_k (\mM u)_k \\
   &\ge  -\sum_{j\in \cI^-_\ell} \bar d_j u_j (\mM u)_j -\sum_{j\in \cI^-_\ell} \bar d_j \left(\lambda u_j (\mM v)_j+\lambda v_j (\mM u)_j+ \lambda^2 v_j (\mM v)_j\right)\\
   &= -\sum_{j\in \cI^-_\ell}\bar d_j x_j(\mM x)_j
\end{aligned}
\]
This completes the analysis of Case II(a).

\paragraph{Case II(b):
$u_i (\mM u)_i= 0$ for all $i\in \cI^+_{\ge \ell}$.} This implies $u^{(\ell)}\cdot (\mM u^{(\ell)})\le \0_n$; by sufficiency of $\mM$, we get that  $u^{(\ell)}\cdot (\mM u^{(\ell)})= \0_n$. In particular,
\[
    x_i(\mM x_i)=\lambda v_i (\mM u)_i+ \lambda u_i (\mM v)_i+ \lambda^2 v_i (\mM v)_i\, , \quad\forall i\in S_{\ge \ell}\, .
\]
If 
$u^{(\ell)}\circ (\mM v^{(\ell)})+  v^{(\ell)}\circ (\mM u^{(\ell)})\le \0_n$, then Lemma~\ref{lem:cross-product} for  $u^{(\ell)}$ and $v^{(\ell)}$ implies that $u^{(\ell)}\circ (\mM v^{(\ell)})+  v^{(\ell)}\circ (\mM u^{(\ell)})= \0_n$. Therefore, $x^{(\ell)}\circ(\mM x^{(\ell)})=\lambda^2 v^{(\ell)}\circ (\mM v^{(\ell)})$. The proof of \eqref{eq:block-statement} in this case follows similarly as Case I for $\lambda=0$, i.e., when  $x^{(\ell)}\circ(\mM x^{(\ell)})=u^{(\ell)}\circ (\mM u^{(\ell)})$. Namely, for each $i\in S_{\ge \ell}$, we get 
\[ |x_i(\mM x)_i|\in \{0\}\cup \lambda^2[\delta,\Delta]\, .\]
 As we have already covered the case when $x_i(\mM x)_i=0$ for all $i\in \cI^+_{>\ell}$, we can assume that there exists a $k\in  \cI^+_{>\ell}$ with  $x_k(\mM x)_k>0$. Using that for any $j\in S_{\ell}$, $\bar d_k\ge K\bar d_j$, the above bounds give
\[
   \frac{1}{n} \bar d_k x_k (\mM x)_k \ge\frac{K}{n}  \bar d_j\cdot \lambda^2 \delta^2 >n \bar d_j\lambda^2 \Delta \ge  \bar d_j n |x_j (\mM x)_j|\, ,\quad\forall j\in S_{\ell}
\]
and \eqref{eq:block-statement} follows the same way as in Case I.

\medskip

For the rest of the proof we can assume that there exists some $h\in \cI^+_{\ge \ell}$ such that $v_h (\mM u)_h+ u_h (\mM v)_h>0$, and hence, at least $\delta^2$.  If possible, then choose $h\in \cI^+_{> \ell}$.
By \eqref{eq:layer-bound}, we get  $\bar d_h \ge \bar d_j /K_0^n$ for any $j\in S_{\ell}$. By also using the Case II upper bound  $\lambda<\delta^2/(3n^2\Delta K_0^n)$, we get
\[
  \frac{1}{n^2} \bar d_h (v_h (\mM u)_h+ u_h (\mM v)_h)>\frac{\bar d_h \delta^2}{n^2}\ge \frac{ \bar d_j\delta^2}{n^2 K_0^n} > \lambda \bar d_j \Delta \, ,\quad\forall j\in S_\ell\, .
\] 
By summing up over $j\in \cI^{-}_{\ell}$ and multiplying by $\lambda$, we get
\[
  \frac{\lambda}{n} \bar d_h (v_h (\mM u)_h+ u_h (\mM v)_h)\ge  \lambda^2 \Delta \sum_{j\in \cI^-_{\ell}} \bar d_j\ge \lambda^2 \sum_{j\in \cI^-_{\ell}}  \bar d_j |v_j (\mM v)_j|\, ,
\] 
Consequently,
\begin{equation}\label{eq:h-bound}
 \frac{1}{n} \bar d_h x_h (\mM x)_h\ge  \lambda^2 \sum_{j\in \cI^-_{\ell}}  \bar d_j |v_j (\mM v)_j|\, ,
\end{equation}
Next, we show
\begin{equation}\label{eq:block-statement-2}
   (1+ 4\tau) \sum_{i\in \cI^+_\ell}\bar d_i \left(u_i (\mM v)_i+ v_i (\mM u)_i\right) +\frac{1}{n} \sum_{i\in \cI^+_{>\ell}}\bar d_i \left(u_i (\mM v)_i+ v_i (\mM u)_i\right)\ge -\sum_{j\in \cI^-_\ell}\bar d_j\left(u_j (\mM v)_j+ v_j (\mM u)_j\right)\, .
\end{equation}
To verify this, note that 
if we were able to choose $h\in \cI^+_{> \ell}$, then for any $j\in \cI^-_\ell$,
\[
     \frac{1}{n^2} \bar d_h (v_h (\mM u)_h+ u_h (\mM v)_h)>\frac{K\bar d_j}{n^2}\delta^2 > 2 \bar d_j \Delta \ge \bar d_j \left(|u_j (\mM v)_j|+|v_j (\mM u)_j|\right)\, ,
\]
Summing up over $j\in \cI^-_\ell$, we get
\[
    \frac{1}{n} \sum_{i\in \cI^+_{>\ell}}\bar d_i \left(u_i (\mM v)_i+ v_i (\mM u)_i\right)>  \frac{1}{n} \bar d_h (v_h (\mM u)_h+ u_h (\mM v)_h)\ge -\sum_{j\in \cI^-_\ell}\bar d_j\left(u_j (\mM v)_j+ v_j (\mM u)_j\right)\, ,
\]
and therefore \eqref{eq:block-statement-2} holds.
We chose  $h\in \cI^+_{\ell}$ only when $v_i (\mM u)_i+ u_i (\mM v)_i=0$ for all $i\in \cI_{>\ell}$.
In this case, Lemma~\ref{lem:cross-product} for $\diag{d}\mM$, $u^{(\ell)}$ and $v^{(\ell)}$, 
and the fact that $\bar d_j=\alpha_\ell d_j$ for all $j\in S_{\ell}$ yields
\[
    (1+ 4\tau) \sum_{i\in \cI^+_\ell}\bar d_i \left(u_i (\mM v)_i+ v_i (\mM u)_i\right)\ge   -\sum_{i\in \cI^-_\ell}\bar d_i\left(u_i (\mM v)_i+ v_i (\mM u)_i\right)\, ,
\]
completing the proof of  \eqref{eq:block-statement-2}. 
This now implies 
\begin{equation}
   (1+ 4\tau) \sum_{i\in \cI^+_\ell}\bar d_i x_i (\mM x)_i+\frac{1}{n} \sum_{i\in \cI^+_{>\ell}}\bar d_i x_i (\mM x)_i\ge -\lambda\sum_{j\in \cI^-_\ell}\bar d_j\left(u_j (\mM v)_j+ v_j (\mM u)_j\right)\, .
\end{equation}
Adding up with \eqref{eq:h-bound}, which contributes an extra term $1/n$ to either the first or to the second sum, we obtain 
\[
    \begin{aligned}
   \left(1+\frac{1}{n}+ 4\tau\right)& \sum_{i\in \cI^+_\ell}\bar d_i x_i (\mM x)_i+\frac{2}{n} \sum_{i\in \cI^+_{>\ell}}\bar d_i x_i (\mM x)_i\\
   &\ge -\lambda\sum_{j\in \cI^-_\ell}\bar d_j\left(u_j (\mM v)_j+ v_j (\mM u)_j\right)-  \lambda^2 \sum_{j\in \cI^-_{\ell}}  \bar d_j v_j (\mM v)_j=  -\sum_{j\in \cI^-_\ell} x_j(\mM x)_j \,  ,
\end{aligned}
\]
where the last equality uses that $u^{(\ell)}\circ (\mM u^{(\ell)})=\0_n$ in the current case.
This completes the proof of \eqref{eq:block-statement} for Case II(b), and thereby completing the proof of the lemma.\end{proof}

\section{An LCP algorithm with optimized handicap number dependence}\label{sec:alg}

\newcommand{\LCPIPA}{\mathsf{LCPIPA}}
\newcommand{\Eupdate}{\mathsf{EllipsoidUpdate}}

Let us restate the linear complementarity problem \ref{eq:LCP}: for $\mM\in\R^{n\times n}$ and $q\in \R^n$, the goal is to find vectors $x,s\in\R^n$ such that
\begin{equation}\label{eq:LCP-2}\tag{LCP$(\mM,q)$}
-\mM x+s=q\, ,\quad x^\top s =0\, ,\quad x,s\ge \0\, .
\end{equation}

In this section, we prove the following.

\lcpmain*

First of all, we would like to check the feasibility of the LCP problem. Fukuda and Terlaky \cite{FT} proved an alternative theorem for the LCP, which they called LCP Duality Theorem. Before recalling it, let us consider the dual LCP problem (DLCP)
\cite{FT}: find vectors ${u}, \, {z} \in \mathbb{R}^n$, that
satisfy the constraints
\begin{equation}\label{DLCP_prob}\tag{DLCP$(\mM,q)$}
{u}+\mM^\top{z}={0},\quad {q}^\top{z}=-1,\quad {u}^\top {z}={0},\quad
{u},\,{z}\ge{0}.
\end{equation}
We consider the linear relaxation \ref{DLCP_prob} as
\[\mathcal{F}_D=\{({u},{z})\ge{0}\;:\;{u}+\mM^\top{z}={0},\;
{q}^\top{z}=-1\}.\]
The above mentioned theorem by Fukuda and Terlaky \cite{FT} asserts duality between these systems:
\begin{theorem} Let a sufficient matrix $\mM
\in \mathbb{R}^{n \times n}$ and a vector ${q} \in \mathbb{R}^{n}$
be given. Then exactly one of \ref{eq:LCP-2} and \ref{DLCP_prob} has a feasible solution.
\end{theorem}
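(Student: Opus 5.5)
The statement splits into two parts: the two systems cannot both be feasible, and at least one of them is feasible. I will prove the first directly and the second with a finite principal pivoting (criss-cross) argument that relies on sufficiency.

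\emph{Not both feasible.} Suppose $(x,s)$ solves \ref{eq:LCP-2} and $(u,z)$ solves \ref{DLCP_prob}. Multiply $q=-\mM x+s$ by $z^\top$ and use $\mM^\top z=-u$:
\[
-1=q^\top z=-x^\top\mM^\top z+s^\top z=x^\top u+s^\top z\ge 0,
\]
because $x,u,s,z\ge\0$. This is a contradiction. The step uses only the linear relaxation $\mathcal F_D$, so in fact the LCP and $\mathcal F_D$ are never both feasible.

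\emph{At least one feasible.} I will run the least-index criss-cross method on the complementary tableaux $(x_{\cJ},s_{\bar\cJ})=\PT_{\cJ}(\mM)(s_{\cJ},x_{\bar\cJ})+\bar q$, which are generated by principal pivotal transformations. By Lemma~\ref{lem:principal-sufficient}, every tableau matrix $\overline{\mM}$ is again sufficient. Start from $\cJ=\emptyset$ and iterate as follows.
\begin{enumerate}[(1)]
\item If $\bar q\ge\0$, the basic solution solves \ref{eq:LCP-2}, and we stop.
\item Otherwise, let $r$ be the least index of a variable with $\bar q_r<0$.
\item If $\overline{\mM}_{rr}\neq 0$, perform the diagonal pivot $\PT_r$.
\item If $\overline{\mM}_{rr}=0$ and some $\overline{\mM}_{rk}\neq0$ has the sign that permits an exchange, take the least such $k$. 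Lemma~\ref{lem:sufficient-props}\eqref{it:opposite-column},\eqref{it:opposite-row} give $\overline{\mM}_{kr}\neq0$ with the opposite sign. Hence the block $\cH=\{r,k\}$ has $\det\overline{\mM}_{\cH\cH}=-\overline{\mM}_{rk}\overline{\mM}_{kr}>0$, and we perform $\PT_{\cH}$.
\item If no exchange is possible, row $r$ shows that $\bar q_r<0$ while every other entry of the row has the sign that keeps the basic variable of row $r$ negative for all nonnegative nonbasic values. From this row we build $z$, supported on the indices of row $r$ (translated back to the original variables), and set $u=-\mM^\top z$. Scale so that $q^\top z=-1$. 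Row sufficiency of the current tableau ($x\circ(\overline{\mM}^\top x)\le\0\Rightarrow=\0$) applied to the sign pattern of this row then gives $u\ge\0$ and $u^\top z=0$. The reasoning is the same as in Lemma~\ref{lem:sufficient-props}: a strictly negative coordinate of $z\circ u$ would violate row sufficiency.
\end{enumerate}

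\emph{Main obstacle: finiteness.} The hard step is showing that the least-index rule never cycles. I would argue by contradiction in the usual way. Take a minimal cycling example and let $q^\ast$ be the largest index that changes status during the cycle. Compare two tableaux in the cycle: the one where $q^\ast$ leaves, and the one where it enters. This yields two vectors. The first is a solution $(x',s')$ of the homogeneous tableau equations read from the first tableau, and $x'\circ s'$ has a sign pattern fixed by the least-index choices. The second is a row vector from the other tableau. Their orthogonality, $\pr{x'}{\text{row}}$-type identities, forces either $x\circ(\mM x)\lneq\0$ or $x\circ(\mM^\top x)\lneq\0$ for a suitable $x$, which contradicts column or row sufficiency respectively. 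Setting up these sign patterns carefully, including the two-element block pivots, is where I expect most of the work to lie.

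Since there are finitely many complementary bases, termination follows. The method therefore ends in case (1), giving a solution of \ref{eq:LCP-2}, or in case (5), giving a solution of \ref{DLCP_prob}.
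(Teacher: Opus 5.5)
Your proof that the two systems cannot both be feasible is correct and complete: the computation $-1=q^\top z=x^\top u+s^\top z\ge 0$ uses only $\mathcal F_D$. The paper does not prove this theorem itself; it quotes it from Fukuda and Terlaky \cite{FT}, whose algorithmic proof is a criss-cross method of this kind, so your route is the standard one.

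\textbf{The gap.} The existence half rests entirely on the claim that your least-index rule never cycles, and that claim is not proved. The finiteness paragraph only describes the shape of the classical argument: a minimal cycling example, the largest moving index, two tableaux, and an orthogonality identity. It leaves open the following:
\begin{enumerate}[(a)]
\item which tableaux are compared when the largest index $\ell$ is moved by an exchange pivot $\PT_{\{r,k\}}$ in the role of $k$ rather than $r$;
\item what the sign patterns of the kernel vector and the row vector actually are;
\item where the \emph{strictly} negative coordinate of $x\circ(\mM x)$ or $x\circ(\mM^\top x)$ comes from.
\end{enumerate}
This is the whole nontrivial content of the existence direction, and it is the only place in your argument where column sufficiency would be used. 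It is the theorem of den Hertog, Roos and Terlaky that this criss-cross rule is finite for sufficient matrices. Without it, ``at least one system is feasible'' is asserted, not proved.

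A smaller point concerns step (5). There $u\ge\0$ is not a consequence of row sufficiency. It is read off the sign pattern of row $r$, since that row is a Farkas certificate for $\{(x,s)\ge\0:-\mM x+s=q\}$. The identity $u^\top z=0$ already follows from the complementary-basis structure together with $\overline{\mM}_{rr}=0$, or alternatively from Lemma~\ref{lem:DLCP}.

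\textbf{How to close it.} One option is to carry out the den Hertog--Roos--Terlaky finiteness proof in full. A much simpler option avoids pivoting entirely, uses only row sufficiency, and is the Cottle--Pang--Venkateswaran argument.
\begin{enumerate}[(1)]
\item By Farkas' lemma, either $\{(x,s)\ge\0:-\mM x+s=q\}\neq\emptyset$ or $\mathcal F_D\neq\emptyset$. In the second case, Lemma~\ref{lem:DLCP} makes any point of $\mathcal F_D$ a solution of \ref{DLCP_prob}.
\item In the first case, consider the quadratic program $\min\{x^\top(\mM x+q): x\ge\0,\ \mM x+q\ge\0\}$. Its feasible set is nonempty and its objective is bounded below by $0$. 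By the Frank--Wolfe theorem it has a minimizer $x$, which satisfies the KKT conditions with multipliers $u,y\ge\0$.
\item Write $w\coloneqq\mM x+q$. The KKT conditions read $\mM^\top(x-y)=u-w$, $x\circ u=\0$ and $y\circ w=\0$.
\item Hence $(x-y)\circ(\mM^\top(x-y))=-x\circ w-y\circ u\le\0$. Row sufficiency forces this vector to be $\0$, so $x\circ w=\0$ and $(x,w)$ solves \ref{eq:LCP-2}.
\end{enumerate}
Combined with your first part, this proves the theorem with no termination argument at all.
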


Later, Ill\'es, E.-Nagy, and Terlaky~\cite{DLCP} observed the following; the proof is immediate by the definition of row sufficiency.
\begin{lemma}\label{lem:DLCP}
Let $\mM$ be a row sufficient matrix. Then, $({u},{z})\in\mathcal{F}_D$ if and only if $({u},{z})$ is a solution to \ref{DLCP_prob}.
\end{lemma}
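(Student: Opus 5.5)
We need to prove Lemma~\ref{lem:DLCP}: if $\mM$ is row sufficient, then every $(u,z)\in\mathcal{F}_D$ satisfies $u^\top z=0$, i.e.\ it solves \ref{DLCP_prob}. The reverse direction is trivial, since any solution of \ref{DLCP_prob} satisfies the defining constraints of $\mathcal{F}_D$ by definition. So the plan is to prove only the forward direction, directly from the definition of row sufficiency applied to the vector $z$.

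Take $(u,z)\in\mathcal{F}_D$. From $u+\mM^\top z=\0_n$ we get $u=-\mM^\top z$, and therefore, coordinatewise,
\[
z\circ(\mM^\top z) = -\,z\circ u .
\]
Since $u\ge \0_n$ and $z\ge \0_n$, the vector $z\circ u$ is nonnegative, hence $z\circ(\mM^\top z)\le \0_n$. Row sufficiency of $\mM$ means that $\mM^\top$ is column sufficient, so $z\circ(\mM^\top z)\le\0_n$ forces $z\circ(\mM^\top z)=\0_n$. Consequently $z\circ u=\0_n$, and summing the coordinates gives $u^\top z=0$. Together with the remaining constraints ($u,z\ge 0$, $u+\mM^\top z=0$, $q^\top z=-1$) already contained in $\mathcal{F}_D$, this shows that $(u,z)$ solves \ref{DLCP_prob}.

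There is no real obstacle here. The only point to be careful about is the sign bookkeeping: $z\circ u\ge 0$ because both vectors are nonnegative, and this is what turns into the nonpositivity hypothesis $z\circ(\mM^\top z)\le\0_n$ required by column sufficiency of $\mM^\top$. Note also that the normalization $q^\top z=-1$ plays no role in the argument; it is simply carried along.
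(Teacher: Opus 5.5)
Your proof is correct and matches the paper's approach: the paper only remarks that the lemma is immediate from the definition of row sufficiency, and your argument supplies those details. You substitute $u=-\mM^\top z$, obtain $z\circ(\mM^\top z)=-z\circ u\le \0_n$, and use column sufficiency of $\mM^\top$ to conclude $z\circ u=\0_n$, hence $u^\top z=0$.
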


Combining these two statements, it follows that  the solvability of a sufficient LCP reduces to solving a linear feasibility problem, and thus can be checked in polynomial time. Therefore, from now on, we assume that the LCP has a solution. 
Our first main tool is the following result from \cite{illes2010polynomial}. %

\begin{theorem}[{\cite[Theorem 20]{illes2010polynomial}}]\label{thm:LCP-kappa}
Let $\mM\in \Q^{n\times n}$ and $q\in\Q^n$, and $\rho\ge 0$, and assume  \ref{eq:LCP-2} is feasible. %
 There exists an algorithm whose running time can be bounded polynomially in $\length{\mM,q}$ and linearly in $\rho$ and terminates with one of the following outputs:
\begin{enumerate}[(i)]
\item  A feasible solution $(x,s)\in \Rp^{2n}$ to \ref{eq:LCP-2} whose size is polynomially bounded in $\length{\mM,q}$.
\item A vector $x\in \Q^n$ whose size is polynomially bounded in $\length{\mM,q}$ such that
\begin{equation}\label{eq:certificate}
(1+4\rho)\sum_{i\in \cI^+_\mM(x)}  x_i(\mM x)_i +\sum_{i\in \cI^-_\mM(x)}  x_i(\mM x)_i<0\, ,
\end{equation}
certifying that $\hk(\mM)>\rho$.
\end{enumerate}
\end{theorem}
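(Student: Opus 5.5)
The plan is to run a standard feasible (or embedded) path-following interior point method, e.g.\ the predictor--corrector or long-step IPA of Kojima, Megiddo, Noma and Yoshise, as if $\mM$ were a $\Pms(\rho)$-matrix, i.e.\ as if $\hk(\mM)\le\rho$. At every point where the complexity analysis invokes the $\Pms(\rho)$ property, we test that property explicitly and stop with a certificate if it fails. The key observation is that the analysis never uses $\hk(\mM)\le\rho$ globally. It only uses inequality \eqref{eq:certificate} with the reverse ($\ge 0$) sign, applied to the specific Newton directions $\Delta x$ computed by the algorithm. The Newton system $-\mM\Delta x+\Delta s=\0$, $s\circ\Delta x+x\circ\Delta s=r$ gives $\Delta s=\mM\Delta x$, so $\sum_i\Delta x_i\Delta s_i$ splits over $\cI^+_\mM(\Delta x)$ and $\cI^-_\mM(\Delta x)$ exactly as in the definition of $\hk(\mM,\Delta x)$.

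The key steps, in order, are as follows.
\begin{enumerate}[(1)]
\item Produce a strictly feasible starting point in a small neighbourhood of the central path. For this I would use the self-dual-type embedding of Kojima et al., which enlarges the problem by one artificial coordinate with a big-$M$ parameter of size $2^{\mathrm{poly}(\length{\mM,q})}$. Here I would use the assumption that \ref{eq:LCP-2} is feasible, together with sufficiency (via Lemma~\ref{lem:DLCP}), to argue that the artificial variable vanishes at the end. The embedding must preserve the handicap up to a controlled factor; otherwise a violated inequality for the embedded matrix has to be translated back into one for $\mM$.
\item In each iteration, compute $\Delta x$ and check whether $(1+4\rho)\sum_{\cI^+}\Delta x_i(\mM\Delta x)_i+\sum_{\cI^-}\Delta x_i(\mM\Delta x)_i\ge 0$. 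If this fails, output $x:=\Delta x$ as in (ii). If it holds, the usual lemmas bound $\|\Delta x\circ\Delta s\|$ by $O((1+\rho)n\mu)$. These bounds give a step length of order $1/((1+\rho)\sqrt n)$ while keeping the iterate in the neighbourhood, so $\mu$ decreases by a factor $1-\Omega(1/((1+\rho)\sqrt n))$.
\item After $O((1+\rho)\sqrt n\,\length{\mM,q})$ iterations $\mu\le 2^{-\mathrm{poly}(\length{\mM,q})}$. Round to an exact complementary solution by the rounding procedure for sufficient LCPs of Ill\'es--Roos--Terlaky. This procedure identifies the optimal partition and solves a linear system, so the output has polynomial size.
\end{enumerate}
The iteration count is linear in $\rho$, and each iteration is polynomial, provided the iterates are rounded to polynomially many bits. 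This rounding also makes the certificate $\Delta x$ of polynomial encoding length.

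I expect the main obstacle to be steps (1) and (3) rather than the core iteration. The rounding step needs the iterate to be close enough to the solution set. Proving this usually relies on a condition-number bound that itself depends on $\hk$, and hence on $\rho$. If the rounded candidate fails to be a solution, we must still extract a certificate. I would first try to show that any failure of the rounding lemma traces back to a Newton direction, or a combination of iterates, violating the $\Pms(\rho)$ inequality, which can then be output as $x$. Failing that, I would replace the rounding by a lower target $\mu$ of size $2^{-O(\length{\mM,q})}$ that is independent of $\rho$, using the bounds of Lemma~\ref{lem:bfs-delta}-type on basic solutions.
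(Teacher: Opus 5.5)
The core of your plan, steps (2) and the certificate test, is sound. You run the Kojima et al.\ path-following method as if $\hk(\mM)\le\rho$, test \eqref{eq:certificate} on each Newton direction $\Delta x$ (with $\Delta s=\mM\Delta x$), and stop with $\Delta x$ when the test fails. This is exactly the mechanism the paper attributes to Ill\'es, E.-Nagy and Terlaky; the paper itself does not prove the statement but imports it from \cite{illes2010polynomial}. You should also handle a singular Newton system, which can occur when $\mM\notin P_0$, but it is harmless: a null vector $w\neq\0_n$ of $\mM+\diag{x}^{-1}\diag{s}$ has $w_i(\mM w)_i=-(s_i/x_i)w_i^2$, so it violates \eqref{eq:certificate} for every $\rho$.

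The genuine gap is step (1), where your key observation (that $\hk(\mM)\le\rho$ is only ever used on Newton directions) is false. The standard argument that the artificial part $\hat z$ of the computed solution $(\hat x,\hat z)$ of the embedded problem vanishes compares it with $(x^*,\0_n)$, for an exact solution $x^*$ of the original LCP. It then applies column sufficiency to the difference. The algorithm does not know $x^*$, so a failure of this argument is not something it can output as (ii). You close the step by invoking sufficiency, but sufficiency is not a hypothesis. The theorem is for arbitrary rational $\mM$, and Algorithm~\ref{alg:lcp} calls it on $\diag{d}\mM$ precisely when sufficiency is unknown. Lemma~\ref{lem:DLCP} does not help either, since it only relates $\cF_D$ to the dual LCP. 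So if $\hat z\neq\0_n$, your algorithm ends with neither output (i) nor output (ii).

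What is missing is a test on a vector the algorithm can actually compute. One way to do this:
\begin{enumerate}
\item Find by LP a vertex $\bar x\ge\0_n$ with $\bar y\coloneqq\mM\bar x+q\ge\0_n$. It exists since the LCP is feasible, and $(\bar x,\0_n)$ is feasible for the embedding $\mM'=\begin{pmatrix}\mM&\mI\\-\mI&\0\end{pmatrix}$, $q'=(q,\tilde x)$.
\item Test \eqref{eq:certificate} for $\mM'$ on $(\hat x-\bar x,\hat z)$.
\item If the test fails, then $\hat x-\bar x$ violates \eqref{eq:certificate} for $\mM$. Indeed, for $x'=(x,z)$ the products at coordinates $i$ and $n+i$ sum to $x_i(\mM x)_i$, so the total is unchanged and the positive parts can only grow. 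This also disposes of your worry about translating certificates back.
\item If the test passes, it yields $\hat z^\top(\tilde x-\bar x)\le(1+4\rho)\,\bar x^\top\bar y$. This forces $\hat z=\0_n$ once $\tilde x$ is integral (after clearing a common denominator) with entries of order $(1+\rho)2^{\mathrm{poly}(L)}$. The reason is that nonzero entries of the vertex solution returned by the final LP are bounded below independently of $\tilde x$.
\end{enumerate}

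Step (3), by contrast, needs no matrix property, and your second fallback works once made precise. Work on the problem the IPA actually runs on; it is written here for $(\mM,q)$ for brevity. Let $B=\{i:x_i\ge s_i\}$ and let $N$ be its complement. The LP $\min\{\epsilon:\ x,s\ge\0,\ s=\mM x+q,\ x_N\le\epsilon\1,\ s_B\le\epsilon\1\}$ has optimal value either $0$ or at least $2^{-\mathrm{poly}}$ in the encoding length. An iterate with $x_is_i\le(1+\beta)\mu$ is feasible for it with $\epsilon=\sqrt{(1+\beta)\mu}$. So once $\mu$ is below a threshold of polynomial bit-size, the face $x_N=\0,\ s_B=\0$ is nonempty, and an LP over that face gives an exact solution of polynomial size.
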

Thus, the algorithm either solves the LCP in the running time bound applicable under the assumption $\hk(\mM)\le \rho$, or finds a certificate of $\hk(\mM)>\rho$. Note that even in case  $\hk(\mM)>\rho$ the algorithm may terminate with a feasible solution as in \emph{(i)}.
 We denote the algorithm asserted in the theorem as $\LCPIPA(\mM,q,\rho)$. 

\paragraph{The Ellipsoid Method}
We now state the result on our main other tool, the Ellipsoid Method, as described in \cite{gls}. For a convex body $K\subseteq \R^n$, a \emph{strong separation oracle for $K$} takes a vector $y\in\R^n$ as input, and either correctly concludes $y\in K$, or finds a vector $c\in\R^n$ such that $\pr{c}{y}<\min\{\pr{c}{x}\mid x\in K\}$. In an \emph{oracle-polynomial algorithm},
the number of oracle calls and the number of arithmetic operations is polynomial in the input encoding length.

\begin{theorem}\label{thm:ellipsoid}
Let $0<r<R$, and let $K\subseteq\R^n$ be a convex body such that $\|x\|_2\le R$ for all $x\in K$, and assume a strong separation oracle is available for $K$. There exists an 
oracle-polynomial time algorithm that in $O(n^2 \log(R/r))$ oracle calls and additional polynomial time, either finds a feasible solution $x\in K$, or concludes that $K$ does not contain a ball of radius $r$.
\end{theorem}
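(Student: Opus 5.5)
We would prove this by the standard central-cut Ellipsoid Method, as in \cite{gls}; since the statement is quoted from there, the plan is to recall that argument. Maintain an ellipsoid $E_k=\{x : (x-a_k)^\top \mA_k^{-1}(x-a_k)\le 1\}$ with $\mA_k$ positive definite, and keep the invariant $K\subseteq E_k$. Start with $a_0=\0$ and $\mA_0=R^2\mI_n$; then $E_0$ is the ball of radius $R$, which contains $K$ by assumption.

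In iteration $k$, call the strong separation oracle on the center $a_k$. If it reports $a_k\in K$, we output $a_k$ and stop. Otherwise it returns $c$ with $\pr{c}{a_k}<\min\{\pr{c}{x}\mid x\in K\}$. Then $K$ lies in the half-ellipsoid $E_k\cap\{x:\pr{c}{x}\ge \pr{c}{a_k}\}$. Let $E_{k+1}$ be the Löwner--John ellipsoid of this half-ellipsoid, given by the explicit update
\[
a_{k+1}=a_k+\frac{1}{n+1}\frac{\mA_k c}{\sqrt{c^\top \mA_k c}},\qquad
\mA_{k+1}=\frac{n^2}{n^2-1}\left(\mA_k-\frac{2}{n+1}\frac{\mA_k c c^\top \mA_k}{c^\top \mA_k c}\right).
\]
The sign in the center update is chosen so that $a_{k+1}$ moves into the halfspace containing $K$. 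The invariant $K\subseteq E_{k+1}$ then holds.

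The key computation is the volume decrease
\[
\mathrm{vol}(E_{k+1})/\mathrm{vol}(E_k)\le e^{-1/(2(n+1))}.
\]
We would verify it by an affine transformation that maps $E_k$ to the unit ball and $c$ to $e^1$. The ratio then reduces to the closed form $\bigl(\tfrac{n}{n+1}\bigr)\bigl(\tfrac{n^2}{n^2-1}\bigr)^{(n-1)/2}$, and one bounds it using $1+t\le e^t$.

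After $N$ iterations without success, $\mathrm{vol}(E_N)\le e^{-N/(2(n+1))}R^n\,\mathrm{vol}(B_1)$. Any ball of radius $r$ inside $K$ has volume $r^n\mathrm{vol}(B_1)$. So once $N> 2n(n+1)\ln(R/r)=O(n^2\log(R/r))$, the current ellipsoid is too small to contain such a ball. Since $K\subseteq E_N$, we conclude that $K$ contains no ball of radius $r$. Each iteration uses one oracle call and $O(n^2)$ arithmetic operations.

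The main obstacle is bit complexity: the exact update involves square roots, and the encoding lengths of $a_k$ and $\mA_k$ could blow up. We would follow \cite{gls} and round $a_{k+1}$ and $\mA_{k+1}$ to $p=O(N)$ binary digits after each step, and blow up the matrix by a factor $1+\frac{1}{2n^2}$ to compensate. Three things must then be shown: (i) the rounded ellipsoid still contains the true half-ellipsoid, via a perturbation bound relying on the eigenvalues of $\mA_k$ staying within $[R^2 2^{-4k},\,R^2 2^{k}]$; (ii) the volume ratio becomes at most $e^{-1/(5n)}$, which still gives the $O(n^2\log(R/r))$ iteration bound; and (iii) all numbers keep polynomially bounded size. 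Together these make the algorithm oracle-polynomial.
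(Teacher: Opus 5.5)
Your proposal is correct and is essentially the paper's route: the paper gives no proof of its own but cites the central-cut Ellipsoid Method of \cite{gls}, using the same volume-reduction lemma (Lemma 3.1.34 there, with the sharper factor $\mathrm{e}^{-1/(2n)}$ in place of your equally sufficient $\mathrm{e}^{-1/(2(n+1))}$) and the same rounded implementation with $O(n^2\log(R/r))$ bits of precision and contraction factor $\mathrm{e}^{-1/(5n)}$. One small point: the explicit update formula requires $n\ge 2$ because of the $n^2-1$ denominator, so the case $n=1$ needs separate handling (e.g.\ bisection), which is harmless here.
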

In more detail, the ellipsoid method maintains in each iteration an ellipsoid of the form
\[
    E(\mB,d)=\{x\in\R^n\,:\, (x-d)^\top \mB (x-d)\le 1\}\, ,
\]
where $\mB\in\R^{n\times n}$ is a positive definite matrix, and $d$ the centre of the ellipsoid. 

The key subroutine $\Eupdate(\mB,d,c)$ takes the current ellipsoid and a direction vector $c\in\R^n$, and finds a new ellipsoid $E(\mB',d')$ that contains the intersection of $E(\mB,d)$ with the halfspace defined by  $\pr{c}{x}\ge \pr{c}{d}$, i.e.,
\begin{equation}\label{eq:ellipsoid-contain}
    E(\mB',d') \supseteq E(\mB,d)\cap \{x\in\R^n\, :\, \pr{c}{x}\ge \pr{c}{d}\}\, .
\end{equation}
The key volumetric progress is guaranteed by
\begin{equation}\label{eq:ellipsoid-progress}
    \mathrm{vol}(E(\mB',d'))\le \mathrm{e}^{-1/(2n)}\mathrm{vol}(E(\mB,d))\, ,
\end{equation}
see Chapter 3, in particular Lemma 3.1.34 in \cite{gls}. Recall that the volume of $E(\mB,d)$ is  $\mathrm{vol}(E(\mB,d))=V_n/\sqrt{\det(\mB)}$, where $V_n$ is the volume of the $n$ dimensional unit ball.

\paragraph{The algorithm}
The proof of Theorem~\ref{thm:LCP-main} relies on
Algorithm~\ref{alg:lcp}, 
 a conjuction of the ellipsoid method and the interior point subroutine $\LCPIPA{}$ as 
  in Theorem~\ref{thm:LCP-kappa}. A description of the algorithm is given in the proof below.

\begin{algorithm}
    \caption{LCP-solve}\label{alg:lcp}
    \raggedright

    \begin{algorithmic}[1]
    \Require{Matrix $\mM\in\Q^{n\times n}$ and $q\in\R^n$ with total encoding length $L$}
    \Ensure{A feasible solution $(x,s)\in\R^{2n}$ to \ref{eq:LCP-2},  or a feasible solution $(u,z)\in\R^{2n}$ to the dual \ref{DLCP_prob}, or the conclusion  $\mM\notin\Pms$} 
    \State Solve the LP feasibility program $\cF_D$. 
    \If{a feasible solution $(u,z)\in\cF_D$ is found}
    \If{$u^\top z=0$} 
    \State \Return $(u,z)$ and terminate.
    \Else 
    \State \Return $\mM\notin\Pms$ and terminate.
    \EndIf\EndIf
     \State $\tau\gets 1$ ; $T\gets 2^{O(nL)}$ as in Theorem~\ref{thm:kappa-bound} ;
     \State $K\gets \tau^{2n^2}2^{O(n^{2} L)}$ as in Theorem~\ref{lem:rescaling-bound} ; $R\gets  \sqrt{n}K$; 
    \While{$\tau<T$}\label{l:main-while}
    \State $\mB\gets \mI_n/R$ ; $d\gets \1_n$ ;
    \While{$\det(\mB)<1/4^{2n}$}\label{l:volume-while}
  \State Call  $\LCPIPA(\diag{d}\mM,\diag{d} q,8n^2\tau)$.
  \If{feasible $(x,s)\in\R^{2n}$ is found}
  \State{ \Return $(x,\diag{1/d}s)$ and terminate.}
  \ElsIf{$x\in\R^n$ is found with $\hk(\diag{d}\mM,x)>5n^2\tau$} 
    \For{$i=1,2,\ldots,n$} \If{$i\in \cI^+(x)$} $c_i\gets (1+32n^2\tau)x_i(\mM x)_i$ \Else~ 
    $c_i\gets x_i(\mM x)_i$ \EndIf\EndFor
    \State $(\mM,d)\gets \Eupdate(\mB,d,c)$ ; 
    \While{$d_i<1$ or $d_i>K$ for some $i\in [n]$}
    \If{$d_i<1$ for some $i\in [n]$} 
     \State $(\mM,d)\gets \Eupdate(\mB,d,e^i)$ ;
     \EndIf
    \If{$d_i>K$ for some $i\in [n]$} 
     \State $(\mM,d)\gets \Eupdate(\mB,d,-e^i)$ ;
     \EndIf
\EndWhile
    \EndIf
\EndWhile
\State $\tau\gets 2\tau$ ;\label{l:tau-double}
\EndWhile
\State\Return{$\mM\notin\Pms$}
    \end{algorithmic}
\end{algorithm}

\paragraph{Proof of Theorem~\ref{thm:LCP-main}}
The proof follows by analyzing Algorithm~\ref{alg:lcp}. We first check if the linear system $\cF_D$ is feasible. If a solution $(u,z)\in\cF_D$ is found and $u^\top z=0$, then we can return this
as a feasible solution to \ref{DLCP_prob}. If $u^\top z\neq 0$, then we obtain a certificate of $\mM\notin\Pms$ in accordance with Lemma~\ref{lem:DLCP}. In both cases, the algorithm terminates.

For the rest, we assume $\cF_D=\emptyset$, and consequently, \ref{eq:LCP} is feasible.
We define the parameters  $T=2^{O(nL)}$ as the upper bound on $\hk(\mM)$ for a sufficient matrix $\mM$ as in Theorem~\ref{thm:kappa-bound}, and $K=\tau^{2n^2}2^{O(n^{2} L)}$ as the upper bound on the rescaling coefficients as in Theorem~\ref{lem:rescaling-bound}.

Throughout, we maintain a guess $\tau$ on the value of $\hks(\mM)$. This is initialized as $1$. The outermost while cycle of the algorithm, starting in line~\ref{l:main-while}, uses the current estimate of $\tau$. At the end of the cycle, we conclude that $\hks(\mM)<\tau$, and double the value of $\tau$. This cycle terminates once $\tau>T$. At this point, noting that $\hks(\mM)\le \hk(\mM)$, we can conclude in accordance with Theorem~\ref{thm:kappa-bound} that $\mM$ is not a sufficient matrix. This outermost while cycle is repeated at most $\log\hks(\mM)$ times, or if the matrix is not sufficient, at most $O(nL)$ times. For each guess $\tau$, the running time is polynomial in $n$ and $L$, and linear in $\tau$. Since we use the $\tau$ values $1,2,2^2,\ldots,2^r$ for some $r\in \mathbb{N}$, the running time bound for the final cycle will dominate the sum of the running time bounds of all cycles (and hence, we do not incur an additional $\log\hks(\mM)$ factor).

We now discuss the execution of the outermost while cycle. For $R=\sqrt{n}K$, we initialize $\mB=\mI_n/R$ and $d=\1_n$, i.e., the initial ellipsoid will be the ball centred around $d$ with radius $R$. Throughout, we use the centre of the ellipsoid $d$ as our rescaling vector, and
maintain the invariant that 
\begin{equation}\label{eq:alg-invariant}
 \text{either }   \cR(\mM,8n^2 \tau)\cap \mE(\mB,d)\text{ contains a ball of radius }r=1/4\quad\text{or}\quad     \cR(\mM, \tau)=\emptyset\, .
\end{equation}
By the choice of $R$, Theorem~\ref{lem:rescaling-bound} guarantees that whenever 
$\cR(\mM, \tau)\neq\emptyset$, then there exists a $\bar d\in \cR(\mM,4n^2 \tau)\cap [1,K]^n$. Then, by Lemma~\ref{lem:d-close}, the ball of radius $1/4$ around $\bar d$ is contained in $\cR(\mM,8n^2 \tau)\cap [1,K+1]^n$. Since $[1,K+1]^n\subseteq \mE(\mB,d)$ for the initial choice $\mB=\mI_n/R$ and $d=\1_n$, this invariant holds at the beginning of the outermost while cycle.

The inner while cycle of the algorithm, starting in line~\ref{l:volume-while}, terminates once $\det(\mB)>1/4^{2n}$, or equivalently, when 
the volume of the current ellipsoid $E(\mB,d)$ drops below the volume of a  ball of radius $r=1/4$. As the invariant \eqref{eq:alg-invariant} is maintained throughout, at the end of this cycle we may conclude $\cR(\mM, \tau)=\emptyset$, or equivalently, $\hks(\mM)>\tau$. Hence, we double the value of $\tau$ at line~\ref{l:tau-double} and restart the while cycle at line~\ref{l:main-while}.

Let us now describe the inner while cycle starting in in line~\ref{l:volume-while}. We first call the interior point algorithm from Theorem~\ref{thm:LCP-kappa} for the rescaled instance $(\diag{d}\mM,\diag{d}q)$ and parameter $\rho=8n^2\tau$. According to Theorem~\ref{thm:LCP-kappa}, the running time is polynomial in the instance encoding length and linear in $\rho$.

Whenever the IPA outputs a feasible solution $(x,s)$, then $(x,\diag{1/d}s)$ is feasible to the original input $(\mM,q)$, and the algorithm terminates. Otherwise, we obtain a vector $x\in\R^n$ as in \eqref{eq:certificate}, i.e., $\hk(\mM,x)>8n^2\tau$. We now update the ellipsoid using the subroutine $\Eupdate{}$, with the cost function $c_i=(1+20n^2\tau)x_i(\mM x)_i$ for $i\in\cI^+(x)$ and $c_i=x_i(\mM x)_i$ otherwise; this represents the separating hyperplane \eqref{eq:certificate}. Since $\pr{c}{d'}>\pr{c}{d}$ must hold for any rescaling $d'\in \cR(\mM,8n^2\tau)$, the update property \eqref{eq:ellipsoid-contain} shows that invariant \eqref{eq:alg-invariant} is maintained after this rescaling.

However, the update may find a new centre
 $d\notin [1,K]^n$, possibly even with negative coordinates. As we  want to maintain our rescaling coordinates in $[1,K]$, we use the separating directions $e^i$ or $-e^i$ for some $i\in [n]$, and keep calling $\Eupdate{}$ until we reach $d\in [1,K]^n$. Clearly, \eqref{eq:alg-invariant} will still hold after such updates.

According to \eqref{eq:ellipsoid-progress}, the total number of calls to the inner while cycle is bounded as  $O(n^2 \log(R/r))=O\left(n^4(L+\log\tau)\right)$, giving a polynomial overall running time.

Furthermore, we need to guarantee that the encoding length of the vector $d$ remains polynomially bounded in $L$ throughout the algorithm. This is also required to guarantee that $(\diag{d}\mM,\diag{d}q)=\mathrm{poly}(n)L$. Thus, 
$\LCPIPA{}$ terminates in polynomial time for the input  $(\diag{d}\mM,\diag{d}q)$. This can be guaranteed by using the polynomial implementation of the Central Cut Ellipsoid Method in \cite[Section 3.3]{gls}. Namely, one can round $\mB$ and $d$ in each iteration to $O(n^2\log(R/r))=O\left(n^4(L+\log\tau)\right)$ digits in the binary expansion. One can obtain the same asymptotic guarantees, with the coefficient $\mathrm{e}^{-1/(5n)}$ in \eqref{eq:ellipsoid-progress}.

\section{Concluding remarks}\label{sec:conclusion}
In this paper, we analyzed the handicap number $\hk{(\mM)}$ of sufficient matrices, and showed that it can be bounded polynomially in the input encoding length. Further, we introduced the notion of the optimized handicap number $\hks{(\mM)}$ that can be achieved by diagonal rescalings, and showed that the set of near-optimal rescalings form a convex set. This enables us to improve the running time dependence of LCP algorithms from $\hk{(\mM)}$ to $\hks{(\mM)}$; the running time improvement may be of exponential magnitude for certain matrices.
However, as  shown in Section~\ref{sec:large-kappa}, $\hks{(\mM)}$ can still be arbitrarily large. Thus, the question of solving LCPs with sufficient matrices in polynomial time remains open in general.

One can also ask for the complexity status of computing or approximating $\hk(\mM)$ and $\hks(\mM)$. While even approximating these quantities seems very difficult, we are not aware of any hardness proofs. Tseng~\cite{tseng2000co} showed that deciding column (or row) sufficiency is co-NP complete. However, these do not immediately imply co-NP completeness of deciding sufficiency. Note that deciding sufficiency is equivalent to deciding whether a finite $\hk(\mM)$  value exists.

The algorithm in Section~\ref{sec:alg} is a black-box combination of the ellipsoid and interior point methods and as such is highly non-practical. It would be desirable to develop more direct and efficient IPAs for LCPs where the running time depends on $\hks(\mM)$ instead of $\hk(\mM)$.

\paragraph{Acknowledgments} The authors would like to thank Tibor Ill\'es for inspiring discussions on the topic, and Haoyuan Ma for his careful reading and suggestions on the manuscript.
L\'aszl\'o V\'egh acknowledges the support of the Corvinus Institute for Advanced Studies, Corvinus University that hosted him as a nonresident senior research fellow in 2023/24.
The research of Marianna E.-Nagy was supported by the J\'anos Bolyai Research Scholarship of the Hungarian Academy of Sciences (2024-2027).

\bibliographystyle{abbrv}
\bibliography{references}
\end{document}

%% file: intro.tex
\section{Introduction}
Consider a Linear Complementarity Problem (LCP) for a matrix $\mM\in\R^{n\times n}$ and $q\in \R^n$ in the form
\begin{equation}\label{eq:LCP}\tag{LCP$(\mM,q)$}
-\mM x+ s=q\, ,\quad {x}^\top{s}=0\, ,\quad x,s\ge \0\, .
\end{equation}

The general problem encompasses several well-known special cases. We can formulate Linear Programming (LP) in a primal-dual embedding with a skew-symmetric matrix $\mM$, and convex quadratic programs with a positive semidefinite matrix. LCPs also capture problems such as Nash equilibria in two-player bimatrix games, stopping problems, market equilibrium problems, and mean payoff games. We refer the reader to the classical monograph by Cottle, Pang, and Stone \cite{cottle2009linear} on the theory and applications of LCP.

The general problem, however, is strongly NP-complete \cite{chung1989}. The exact boundary between polynomially solvable and hard cases of LCP remains a fundamental question. 
Kojima,  Megiddo, Noma,  and Yoshise \cite{kojima1991unified} gave an important general result in this context, by showing that LCPs where $\mM$ has a bounded \emph{handicap number} are solvable in polynomial time.

\paragraph{The handicap number and $\Pms$-matrices}
The handicap number $\hk(\mM)$ can be seen as a relaxation of positive semidefiniteness. Recall that $\mM\in\R^{n\times n}$ is positive semidefinite (PSD) if and only if $x^\top \mM x=\sum_{i=1}^n x_i (\mM x)_i\ge 0$ for all $x\in\R^n$. Note that we do not assume the symmetry of the matrix $\mM$.
For  $x\in\R^n$, let us consider the following partition of the index set according to the sign of $x_i (\mM x)_i=x_i\mM_i x$, where $\mM_i$ denotes the $i$-th row of $\mM$.
\[
\begin{aligned}
\cI_\mM^+(x)\coloneqq\{i\, |\, x_i (\mM x)_i>0\}\, , \quad  \cI_\mM^0(x)\coloneqq\{i\, |\, x_i (\mM x)_i=0\}\, , \quad  \cI_\mM^-(x)\coloneqq\{i\, |\, x_i (\mM x)_i<0\}\, .
\end{aligned}
\]
The \emph{handicap number} of the matrix $\mM\in\R^{n\times n}$ is defined as
\[
\hk(\mM)\coloneqq\min\left\{\kappa\ge 0\; |\; x^\top \mM x + 4\kappa\! \!\sum_{i\in \cI_\mM^+(x)} \! x_i (\mM x)_i \ge 0\quad \forall x\in\R^n\right\}\, .
\]
Thus, $\hk(\mM)=0$ if and only if $\mM$ is PSD.
We let $\Pms(\kappa)$ denote the set of $n\times n$ matrices with $\hk(\mM)\le \kappa$, and $\Pms\coloneqq\cup_{\kappa\ge 0}\Pms(\kappa)$. A matrix $\mM\in\Pms$ is called a \emph{$\Pms$-matrix}. Kojima et al.~\cite{kojima1991unified} gave a \emph{unified interior point method} that can solve $\LCP(\mM,q)$ in $O((1+\hk(\mM))\sqrt{n} L)$ iterations, where $L$ is the total encoding length of the rational input $(\mM,q)$.

Nevertheless, $\hk(\mM)$ may not be polynomially bounded in $L$, and therefore this does not yield a polynomial-time algorithm for $\Pms$-matrices. The first such example was given by de Klerk and E.-Nagy \cite{deklerk2011complexity}; they showed that the following  simple  matrix with $0,\pm1$ entries, called the \emph{Csizmadia matrix}, has exponential handicap number:
\begin{equation}\label{eq:matrix-csizmadia}
\mC_n \coloneqq
\begin{pmatrix}
 1 &  0 & 0 & \cdots & 0\\
-1 &  1 & 0 & \cdots & 0\\
-1 & -1 & 1 &\cdots &0\\
\vdots   & \vdots   & \vdots  & \ddots & \vdots\\
-1&-1&-1&\cdots&1
\end{pmatrix}\, .\end{equation}
This matrix has $\hk(\mC_n)=2^{2n}-\tfrac14$ (see \cite{deklerk2011complexity,e2023sufficient}). The paper \cite{deklerk2011complexity} raised as an open question to bound the handicap number of a rational matrix with encoding length $L$ as $\hk(\mM)\le 2^{\mathrm{poly}(L)}$. The first contribution of this paper is resolving this in the affirmative, see Theorem~\ref{thm:kappa-bound} below.

\paragraph{Optimizing the handicap number}
Even if $\hk(\mM)$ is large, a suitable preconditioning could turn the LCP into a tractable one.
Let $\Dp\subseteq \R^{n\times n}$ denote the set of $n\times n$ positive diagonal matrices.
 Row and column rescalings of $\mM$ turn the LCP to an equivalent problem: for positive diagonal matrices $\mD',\mD\in\Dp$, $(x,s)$ is a solution to \ref{eq:LCP} if and only if $(\mD^{-1} x,\mD' s)$ is a solution to
 $\LCP(\mD' \mM\mD,\mD'q)$. This motivates the following definition:
 \begin{definition}\label{def:opt-handicap}
For a matrix $\mM\in\R^{n\times n}$, the \emph{optimized handicap number} $\hks(\mM)$ is defined as
\[
\hks(\mM)\coloneqq\inf\{\hk(\mD\mM)\mid \mD\in\Dp\}\, .
\]
\end{definition}
The definition only uses row rescalings, but row scalings can achieve the same handicap values as allowing both row- and column scalings, by noting that $\hk(\mD' \mM \mD')=\hk(\mM)$ for $\mD'\in\Dp$ (see Lemma~\ref{lem:diag-rescale-invariant} below); this is immediate from the definition of $\hk$.

For the matrix $\mC_n$ defined in \eqref{eq:matrix-csizmadia}, it turns out that $\hks(C_n)=0$.   Let
\[
\mD' \coloneqq
\begin{pmatrix}
 1 &  0 & 0 & \cdots & 0\\
0 &  1/2 & 0 & \cdots & 0\\
0 & 0 & 1/2^2 &\cdots &0\\
\vdots   & \vdots   & \vdots  & \ddots & \vdots\\
0&0&0&\cdots&1/2^{n-1}
\end{pmatrix}\, , \quad 
\mD \coloneqq
\begin{pmatrix}
 1 &  0 & 0 & \cdots & 0\\
0 & 2 & 0 & \cdots & 0\\
0 & 0 & 2^2 &\cdots &0\\
\vdots   & \vdots   & \vdots  & \ddots & \vdots\\
0&0&0&\cdots&2^{n-1}
\end{pmatrix}
.\]
These scalings yield
\[
\mD'\mC_n\mD =
\begin{pmatrix}
 1 &  0 & 0 & \cdots & 0\\
-1/2 &  1 & 0 & \cdots & 0\\
-1/2^2 & -1/2 & 1 &\cdots &0\\
\vdots   & \vdots   & \vdots  & \ddots & \vdots\\
-1/2^{n-1}&-1/2^{n-2}&-1/2^{n-3}&\cdots&1
\end{pmatrix}\, .\]
The rescaled matrix is diagonally dominant and therefore positive semidefinite. Consequently, $\hk(\mD'\mC_n\mD)=0$. 
One can similarly show that $\hks(\mM)=0$ for every triangular matrix $\mM$ with positive diagonal entries.

The question of whether $\hks(\mM)=0$ is equivalent to deciding whether a matrix can be turned PSD by a suitable rescaling. This latter class of matrices is already appeared in \cite{deklerk2011complexity} and was denoted by $\mathcal{H}$. De Klerk and E.-Nagy proved that the membership problem for set $\mathcal{H}$ can be formulated as a semidefinite feasibility problem (see Lemma 5 in \cite{deklerk2011complexity}), therefore it can be solved by an interior point algorithm (IPA) in polynomial time with $\varepsilon$-precision.
Let us state here a simplified SDP feasibility problem (a linear matrix inequality) for the sake of completeness:
\[ \mM\in\mathcal{H}\quad\Longleftrightarrow\quad \exists \; y\in \mathbb{R}^{n\times n}_{\ge 0}:\quad \mT_0+\sum_{i=1}^n y_i\mT_i\succeq 0\, , \]
where $\mT_0=\mM+\mM^\top$ and $\mT_i=\diag{e^i}\mM+\mM^\top\diag{e^i}, i=1\dots,n$.\footnote{$\diag{d}\in\Dp$ denotes the diagonal matrix with entries $d$.} For any feasible $y$, the diagonal matrix $\mD$ with the diagonal elements $\mD_{ii}=y_i+1$, $i=1,\dots,n$ defines a suitable rescaling for matrix $\mM$, that is, $DM$ will be a PSD matrix.

However, the complexity of computing $\hks(\mM)$ is not clear; even the complexity of computing of $\hk(\mM)$,  or even deciding finiteness of $\hk(\mM)$ is open, although these problems are likely hard. Despite this, our second contribution, Theorem~\ref{thm:LCP-main} below, asserts that \ref{eq:LCP} can be solved in time polynomial in  $\hks(\mM)$ and the encoding length of $(\mM,q)$, improving the dependence on $\hk(\mM)$ in \cite{kojima1991unified} to $\hks(\mM)$.

\subsection{Our contributions}

 Our first result resolves an open question in \cite{deklerk2011complexity}. The binary encoding length is formally defined in Section~\ref{sec:prelim}.
\begin{restatable}{theorem}{kappabound}\label{thm:kappa-bound}
  For a rational matrix $\mM\in\Pm_*\cap \Q^{n\times n}$ with $L=\length{\mM}$, $\hk(\mM)\le 2^{O(n L)}$.  
\end{restatable}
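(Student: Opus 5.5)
The plan is to reduce to nonnegative vectors, write a near-extremal vector as a short combination of basic feasible solutions of a polytope of the form $\cQ(\mM,\cI^+,\cI^-)$, and bound the handicap on such combinations. First I fix $x\in\R^n$ with $\hk(\mM,x)\ge \hk(\mM)/2$. Such an $x$ exists because $\mM\in\Pms$ makes $\hk(\mM)$ finite, and the case $\hk(\mM)=0$ is trivial. As in the proof of Proposition~\ref{thm:kappa-bound-P}, I flip signs with a $\pm1$ diagonal matrix $\mD$ so that $\mD x\ge \0_n$. Lemma~\ref{lem:diag-rescale-invariant} shows $\hk$ is unchanged, the encoding length of $\mD\mM\mD$ is still $L$, and $\mD\mM\mD\in\Pms$. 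So I may assume $x\ge\0_n$.

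Next I apply Lemma~\ref{lem:two-basic}. It gives basic feasible solutions $u,v$ of $\cQ(\mM,\cI^+_\mM(x),\cI^-_\mM(x))$ and $\lambda\in[0,1]$ with
\[
\hk(\mM,u+\lambda v)\ge \hk(\mM,x)/(n+1)^2-1\ge \hk(\mM)/(2(n+1)^2)-1\, .
\]
It therefore suffices to bound $\hk(\mM,u+\lambda v)$ by $2^{O(nL)}$. For this I invoke Lemma~\ref{lem:u-v-cases} with its rational constant $C(\mM)=2^{O(nL)}$, which comes from the bounds $\delta,\Delta$ of Lemma~\ref{lem:bfs-delta}. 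If case~\eqref{it:C-C} holds, then $\hk(\mM)\le 2(n+1)^2(C(\mM)+1)=2^{O(nL)}$ and we are done.

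It remains to show that the other cases cannot occur for $\mM\in\Pms$. Cases~\eqref{it:C-u} and \eqref{it:C-v} give a vector $w$ with $w\circ(\mM w)\lneq\0_n$. Then $\hk(\mM,w)=\infty$, which contradicts finiteness of $\hk(\mM)$ (equivalently, column sufficiency via Lemma~\ref{lem:p-star-suff}).

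Case~\eqref{it:C-u-v} is the main obstacle, because no single vector witnesses an infinite handicap there. I would take $z_t=u+\lambda_t v$ with $\lambda_t\downarrow 0$ and use expression \eqref{eq:u-v-lambda-t-2}, which is valid since $u\circ(\mM u)=\0_n$. By \eqref{it:C-u-v}, $u\circ(\mM v)+v\circ(\mM u)\le\0_n$ with a strictly negative entry, so the cross terms contribute nothing positive on $\cI^+$. Hence the denominator of \eqref{eq:u-v-lambda-t-2} is $O(\lambda_t)$, while the numerator stays bounded below by the strictly positive cross-term contribution on $\cI^-$. This forces $\hk(\mM,z_t)\to\infty$, contradicting $\mM\in\Pms$.

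The care needed in this last step is to handle the index sets correctly: \eqref{eq:u-v-lambda-t-2} is written over the fixed $\cI^\pm$, whereas $\cI^\pm_\mM(z_t)$ may be smaller. The inclusions $\cI^\pm_\mM(z)\subseteq\cI^\pm\subseteq\cI^\pm_\mM(z)\cup\cI^0_\mM(z)$ make the two expressions coincide, so the limit argument applies. Alternatively, one can cite Lemma~\ref{lem:cross-product} directly: with finite $\hk(\mM)$ its inequality forces the cross-term sum over $\cI^-$ to be nonnegative, contradicting \eqref{it:C-u-v}.
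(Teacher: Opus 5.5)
Your proposal is correct and follows the paper's proof essentially step for step. You reduce to $x\ge\0_n$ by a $\pm1$ sign flip, pass to $u+\lambda v$ via Lemma~\ref{lem:two-basic}, and apply Lemma~\ref{lem:u-v-cases}, ruling out cases \eqref{it:C-u} and \eqref{it:C-v} directly and case \eqref{it:C-u-v} by letting $\lambda_t\to 0$ in \eqref{eq:u-v-lambda-t-2}. Your alternative via Lemma~\ref{lem:cross-product} is also valid, since its proof does not rely on the theorem. If the denominator in \eqref{eq:u-v-lambda-t-2} happens to vanish, then $\hk(\mM,z_t)=\infty$ outright, which gives the same contradiction.
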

Showing a $2^{\mathrm{poly}(n) L}$ bound is easy for many matrix condition numbers. Such bounds are known for condition numbers such as
the maximum ratio between singular values, maximum subdeterminants, the Dikin--Stewart--Todd measure $\bar\chi(\mM)$ \cite{vavasis1996primal} or the circuit imbalance measure  \cite{dadush2023scaling}. These arguments ultimately reduce to relating the condition numbers to the entries of the inverse of nonsingular square submatrices of $\mM$.
Analogous arguments suffice to show Theorem~\ref{thm:kappa-bound} for the special case of $\cP$-matrices, i.e., matrices where all principal minors are positive. This argument is given as Proposition~\ref{thm:kappa-bound-P} and we now sketch the proof. We consider $\hat x\in\R^n$ that is  optimal to the program defining $\hk(\mM)$; we may normalize it as $\|\hat x\|_1=1$. Let us define a polytope $\cQ$ containing all points $y$ where $y_i$ and $(\mM y)_i$ have the same signs as $\hat x_i$ and $(\mM \hat x)_i$ respectively, for each index $i$, and $\|y\|_1=1$. We can write $\hat x$ as a convex combination of at most $n+1$ extreme points of $\cQ$; let $v^{(1)}$ be the point with the largest coefficient. The $\cP$-matrix property guarantees that $v^{(1)}_i (\mM v^{(1)})_i>0$ for at least one index (see Lemma~\ref{lem:p-reverse}). This positive entry will be $2^{-O(n L)}$ due to standard bit-complexity arguments on basic solutions. This already suffices to derive the bound on
$\hk(\mM)$.

The proof becomes significantly more involved for matrices $\mM\in\Pms\setminus\cP$. For $u,v\in\R^n$, let $u\circ v\in\R^n$ denote the Hadamard product. We start the proof as above, but it is possible that
$v^{(1)}\circ (\mM v^{(1)})=\0_n$, i.e.,
$v^{(1)}_i (\mM v^{(1)})_i=0$ for all $i$. In fact, $v^{(k)}\circ (\mM v^{(k)})=\0_n$ may happen for all extreme points $v^{(k)}$ with large coefficients in the combination.
Theorem~\ref{thm:kappa-bound} follows by showing that there are two basic solutions $u=v^{(k)}$ and $v=v^{(\ell)}$, and $\lambda>0$, such that the handicap value corresponding to $u+\lambda v$ is within a factor $n^2$ from the optimum value in the problem defining $\hk(\mM)$ (see Lemma~\ref{lem:two-basic}). However, these critical basic solutions $u=v^{(k)}$ and $v=v^{(\ell)}$ may have arbitrarily low coefficients in the combination giving $\hat x$, necessitating a more careful argument.
 
 The same Lemma~\ref{lem:two-basic} also leads to a new equivalent characterization of $\Pms$-matrices, and in turn to a simple new proof of  V\"aliaho's classical result asserting that the class $\Pms$ is the same as the class of sufficient matrices. Let us now define this matrix class. 
\begin{definition}\label{def:sufficient}
A matrix $\mM\in\R^{n\times n}$ is \emph{column} sufficient if $x\circ (\mM x)\le \0_n$  implies $x\circ (\mM x)= \0_n$, and \emph{row sufficient} if $\mM^\top$ is column sufficient. A matrix is \emph{sufficient} if it is both row and column sufficient.
\end{definition}
In the equivalence of sufficient and $\Pms$-matrices, the only trivial direction is that every $\Pms$-matrix is column sufficient. The row sufficiency of $\Pms$-matrices was shown by Guu and Cottle in \cite{guu1995}, and the converse  direction that all sufficient matrices are $\Pms$ was shown by 
V\"aliaho in \cite{valiaho1996p}. We give a simple direct proof of the first direction (Lemma~\ref{lem:p-star-suff}). The new proof of the converse direction is via a third, equivalent characterization as stated next.
\begin{restatable}{theorem}{valiaho}\label{thm:valiaho}
Let $\mM\in\R^{n\times n}$. Then, the following are equivalent:
\begin{enumerate}[(A)]
\item\label{it:PM} $\mM$ is $\Pms$.
\item\label{it:suff} $\mM$ is sufficient.
\item\label{it:u-v} For any vectors $u,v\in \R^n$ such that $v\circ (\mM v)\le \0_n$, $u\circ (\mM v)+ v\circ(\mM u)\le \0_n$, we must have  $v\circ (\mM v)+u\circ (\mM v)+ v\circ(\mM u)= \0_n$.
\end{enumerate}
\end{restatable}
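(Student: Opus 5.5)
We prove the cycle of implications \eqref{it:PM}$\Rightarrow$\eqref{it:suff}$\Rightarrow$\eqref{it:u-v}$\Rightarrow$\eqref{it:PM}.

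\textbf{Step 1: \eqref{it:PM}$\Rightarrow$\eqref{it:suff}.} Column sufficiency of a $\Pms$-matrix is immediate. Indeed, $x\circ(\mM x)\lneq\0_n$ gives $\cI^+_\mM(x)=\emptyset$ and $x^\top\mM x<0$, so $\hk(\mM,x)=\infty$. Row sufficiency needs a separate argument, which I would organise as a minimal counterexample. Take $\mM\in\Pms$ and $x$ with $x\circ(\mM^\top x)\lneq\0_n$, with $n$ as small as possible. Any zero entry of $x$, or of $\mM^\top x$, is eliminated by deleting an index, possibly after a principal pivot on $\{i\}$ or on a $2\times2$ block. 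Lemma~\ref{lem:principal-sufficient} and Lemma~\ref{lem:sufficient-props} guarantee these pivots are available. The remaining degenerate case is a zero column; a direct two-coordinate vector then shows $\hk=\infty$. So all $x_i$ and all $y_i=(\mM^\top x)_i$ are nonzero. Shrinking one coordinate $x_i$ toward $0$ while preserving the signs pins down $\mM_{ij}=y_j/x_i$. It then follows that $x'=1/y$ satisfies $\mM x'=1/x$, which violates column sufficiency.

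\textbf{Step 2: \eqref{it:suff}$\Rightarrow$\eqref{it:u-v}.} This is where I expect the real work to be. Suppose $u,v$ violate \eqref{it:u-v}. Column sufficiency forces $v\circ(\mM v)=\0_n$, and hence $u\circ(\mM v)+v\circ(\mM u)\lneq\0_n$. Both vectors are invariant under principal pivoting, so among all pivoted equivalents I choose one minimising $\supp(v)$.

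Partition the indices into three sets: $\cI$ where $v_i=0\neq(\mM v)_i$, $\cJ$ where both vanish, and $\cK$ where $v_i\neq0=(\mM v)_i$. Minimality together with Lemma~\ref{lem:sufficient-props}\eqref{it:opposite-row} should force $\mM_{\cK,\cJ\cup\cK}=0$, since any nonzero entry there would permit a $1\times1$ or $2\times2$ pivot that shrinks $\supp(v)$. Then define $\bar u$ by zeroing out $u$ on $\cJ$; this leaves the cross term unchanged.

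Now consider $z=v+\varepsilon\bar u$. Then $z\circ(\mM z)=\varepsilon(\text{cross term})+\varepsilon^2\,\bar u\circ(\mM\bar u)$. At any coordinate where $\bar u_i(\mM\bar u)_i>0$, the index lies in $\cI\cup\cK$, and there the cross term is strictly negative. So for small $\varepsilon$ we get $z\circ(\mM z)\lneq\0_n$, contradicting column sufficiency.

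\textbf{Step 3: \eqref{it:u-v}$\Rightarrow$\eqref{it:PM}.} I argue by contraposition. If $\mM\notin\Pms$, take $x^{(t)}$ with $\hk(\mM,x^{(t)})\to\infty$. If some $\hk(\mM,x^{(t)})=\infty$, then $(u,v)=(\0_n,x^{(t)})$ already violates \eqref{it:u-v}.

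Otherwise, normalise and pass to a subsequence with fixed sign patterns and a fixed partition. Flip signs by $\mD$ via Lemma~\ref{lem:diag-rescale-invariant}, which we may do since \eqref{it:u-v} is invariant under this as well, so that $x^{(t)}\ge0$. Apply Lemma~\ref{lem:two-basic}; because there are only finitely many basic solutions, we may fix $u,v$, with $\hk(\mM,u+\lambda_tv)\to\infty$. This excludes case \eqref{it:C-C} of Lemma~\ref{lem:u-v-cases}. The remaining cases \eqref{it:C-u-v}, \eqref{it:C-u} and \eqref{it:C-v} each give a pair violating \eqref{it:u-v}: namely $(u,v)$ in the roles $(v,u)$, then $(\0_n,u)$, and $(\0_n,v)$ respectively.
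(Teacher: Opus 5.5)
Your proposal is correct and follows the paper's proof essentially step for step. The three parts match: the minimal-counterexample argument for row sufficiency ending with $x'=1/y$; the support-minimising pivot with the $\cI,\cJ,\cK$ partition and the perturbation $v+\varepsilon\bar u$; and the contrapositive via Lemma~\ref{lem:two-basic} and Lemma~\ref{lem:u-v-cases}, where your explicit role swap in case~\eqref{it:C-u-v} and your remark that \eqref{it:u-v} is invariant under $\pm1$ sign flips are, if anything, more careful than the paper.

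One small citation slip: in Step~1 you cannot lean on Lemma~\ref{lem:principal-sufficient}, since sufficiency is what is being proved there. What you actually need is that a principal pivot preserves $x\circ(\mM x)$, and hence $\hk$, so the pivoted matrix is still in $\Pms$ and still column sufficient for Lemma~\ref{lem:sufficient-props}.
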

To get an intuition of part~\eqref{it:u-v}, take vectors $u,v\in \R^n$ such that $v\circ (\mM v)\le \0_n$, $u\circ (\mM v)+ v\circ(\mM u)\le \0_n$.
 If $v\circ (\mM v)\lneq\0_n$, then $v$ is a certificate that $\mM$ is not column sufficient. Assume $v\circ (\mM v)=\0_n$ and $u\circ (\mM v)+ v\circ(\mM u)\lneq \0_n$; let $\gamma\coloneqq-(u^\top \mM v+ v^\top \mM u)>0$. Then, for $z\coloneqq v+\varepsilon u$, we have $\sum_{i\in \cI_\mM^+(z)} z_i (\mM z)_i\le \varepsilon^2 \sum_{i\in \cI_\mM^+(u)}u_i (\mM u)_i$, while $\sum_{i\in \cI_\mM^-(z)}z_i (\mM z)_i\le \varepsilon (u^\top \mM v+ v^\top \mM u)+\varepsilon^2 u\circ \mM u=\varepsilon(-\gamma+\varepsilon u\circ \mM u)$. Thus, $|\sum_{i\in \cI_\mM^-(z)}z_i (\mM z)_i|/\sum_{i\in \cI_\mM^+(z)}z_i (\mM z)_i\ge (\gamma-\varepsilon |u\circ \mM u|)/(\varepsilon \sum_{i\in \cI_\mM^+(u)}u_i (\mM u)_i)\to \infty$ as $\varepsilon\to 0$, showing that $\hk(\mM)=\infty$, that is, $\mM$ is not in $\Pms$. Our proof reveals that whenever a matrix is not in $\Pms$, then one can identify either a certificate that it is not column sufficient of the form $v\circ (\mM v)\lneq \0_n$, or a direction witnessing the unboundedness of $\hk(\mM)$. 

\medskip

We now turn to the main algorithmic contribution of the paper.
\begin{restatable}{theorem}{lcpmain}\label{thm:LCP-main}
Let $\mM\in \Q^{n\times n}$ and $q\in\Q^n$. Then, there exists an
algorithm whose running time can be bounded polynomially in $\length{\mM,q}$ and linearly in $\hks(\mM)$, and  returns one of the following outputs:
\begin{enumerate}[(i)]
\item A feasible solution to \ref{eq:LCP}. 
\item A dual certificate of infeasibility.
\item The conclusion that $\mM\notin\Pms$.
\end{enumerate}
\end{restatable}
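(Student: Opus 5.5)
The algorithm will wrap the interior point subroutine $\LCPIPA$ of Theorem~\ref{thm:LCP-kappa} inside the Ellipsoid Method running over the convex set of row rescalings $\cR(\mM,\cdot)$.

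\textbf{Step 1: feasibility check.} Solve the linear feasibility problem $\cF_D$. If it has a solution $(u,z)$ with $u^\top z=0$, return it as a dual certificate. If $u^\top z\neq 0$, then Lemma~\ref{lem:DLCP} shows $\mM$ is not row sufficient, so Lemma~\ref{lem:p-star-suff} gives $\mM\notin\Pms$. If $\cF_D=\emptyset$ and $\mM$ is sufficient, the Fukuda--Terlaky theorem makes \ref{eq:LCP} feasible, which is the hypothesis Theorem~\ref{thm:LCP-kappa} needs.

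\textbf{Step 2: doubling search over $\tau$.} Run $\tau=1,2,4,\dots$ up to $T=2^{O(nL)}$. Past $T$ we may output $\mM\notin\Pms$, because $\hks(\mM)\le\hk(\mM)\le T$ by Theorem~\ref{thm:kappa-bound}. For a fixed $\tau$, run the Ellipsoid Method (Theorem~\ref{thm:ellipsoid}) starting from a ball around $\1_n$ of radius $R=\sqrt n K$, where $K=\tau^{2n^2}2^{O(n^2L)}$.

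\emph{Invariant.} Either $\cR(\mM,\tau)=\emptyset$, or the current ellipsoid contains a ball of radius $1/4$ inside $\cR(\mM,8n^2\tau)$.
\begin{itemize}
\item The invariant holds initially: Theorem~\ref{lem:rescaling-bound} gives $\bar d\in\cR(\mM,c n^2\tau)\cap[1,K]^n$ for a suitable constant $c$, and Lemma~\ref{lem:d-close} then puts a $1/4$-ball around $\bar d$ into $\cR$ at twice that parameter.
\item At each step, call $\LCPIPA(\diag{d}\mM,\diag{d}q,\rho)$ with $\rho$ matching the invariant constant. A returned solution $(x,s)$ yields $(x,\diag{d}^{-1}s)$ for the original problem.
\item Otherwise the returned $x$ satisfies \eqref{eq:certificate} for $\diag{d}\mM$. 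By Lemma~\ref{lem:rescale-preserve} the index sets $\cI^\pm$ do not depend on $d$, so \eqref{eq:certificate} is a linear inequality in $d$ that $d$ violates but every $d'\in\cR(\mM,\rho)$ satisfies. It is therefore a valid central cut.
\item If the new centre leaves $[1,K]^n$, add the cuts $\pm e^i$ until it returns. These never remove the guaranteed ball, which lies in $[1,K+1]^n$, since the box is enlarged by $1/4$ anyway.
\item Once the volume falls below that of a $1/4$-ball, conclude $\cR(\mM,\tau)=\emptyset$, i.e.\ $\hks(\mM)>\tau$, and double $\tau$.
\end{itemize}

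\textbf{Step 3: running time.} Each phase makes $O(n^2\log(R/r))=O(n^4(L+\log\tau))$ ellipsoid iterations. Each $\LCPIPA$ call costs poly$(L)$ times $O(n^2\tau)$. The geometric sum over the doubling phases is dominated by the last phase, where $\tau\le 2\hks(\mM)$. This gives time polynomial in the input length and linear in $\hks(\mM)$, with an extra factor $\log\tau$, which is polynomial in $L$ because $\tau\le T$.

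\textbf{Main obstacle.} I expect the hardest part to be bit-complexity: keeping $\length{\diag{d}\mM,\diag{d}q}$ polynomial, so that $\LCPIPA$ stays polynomial and the cut vectors $c$ remain polynomially sized. I would handle this with the rounded central-cut ellipsoid method of \cite[Section 3.3]{gls}, which rounds the centre and shape matrix to $O(n^2\log(R/r))$ bits at a constant-factor loss in the volume decrease. I would also use the size guarantee on $x$ from Theorem~\ref{thm:LCP-kappa}. A secondary point is matching the constants in the invariant: $4n^2$ versus $8n^2$ between Theorem~\ref{lem:rescaling-bound} and Lemma~\ref{lem:d-close}, and the IPA parameter $\rho$.
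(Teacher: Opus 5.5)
Your proposal is correct and follows the paper's proof essentially step for step. The shared steps are: the $\cF_D$ feasibility check, doubling $\tau$ up to $T$, the ellipsoid over row rescalings with the invariant on a $1/4$-ball in $\cR(\mM,8n^2\tau)$, central cuts from the IPA certificate (made linear in $d$ by Lemma~\ref{lem:rescale-preserve}), box cuts $\pm e^i$, and the rounded central-cut ellipsoid method for bit-complexity. The constant mismatches you flag are cosmetic and occur in the paper's own write-up too. The same holds for the fact that a $1/4$-ball around $\bar d\in[1,K]^n$ actually lies in $[3/4,K+1/4]^n$, so the box cuts at $1$ and $K$ could clip it; this is fixed, e.g., by centring the ball at $\tfrac54\bar d$ instead.
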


As already noted, we do not have a preconditioning algorithm to find $\mD\in \Dp$ such that $\hk(\mD\mM)$ is approximately the optimal value $\hks(\mM)$. Instead, we use a combination of the ellipsoid method and an IPA subroutine to gradually find a suitable column scaling. We note that the algorithm does not need to know the value of $\hk(\mD\mM)$.

The crucial observation is that for any $\tau\ge0$, the set of positive diagonal  matrices $\mD\in\Dp$ such that $\hk(\mD\mM)\le \tau$ is convex (Lemma~\ref{lem:rescale-convex}). This follows by observing that for any $\mD\in\Dp$, 
$\cI_{\mD\mM}^-(x)=\cI_\mM^-(x)$ and $\cI_{\mD\mM}^+(x)=\cI_\mM^+(x)$ hold. Moreover, we show that---assuming there exists a rescaling with $\hk(\mD\mM)\le \tau$---there exists a ball $\cB^*\subseteq \Rpp^n$  of radius $r=1/4$, contained in a ball of radius $R=\tau^{2n^2}2^{O(n^2 L)}$, such that for any $d\in \cB^*$, $\hk(\diag{d}\mM)=\rho$ for $\rho=O(n^2\hks(\mM))$ (Theorem~\ref{lem:rescaling-bound} and Lemma~\ref{lem:d-close}).  Thus, the ellipsoid method (see \cite{gls}) would be a natural approach to find a rescaling in $\cB^*$, assuming we can find a separation oracle.

Algorithm~\ref{alg:lcp} in Section~\ref{sec:alg}
maintains a guess $\tau$ on $\hks(\mM)$, initialized as $\tau=1$. We maintain an ellipsoid centered at a point $d\in\Rp^n$,   initialized as $d=\1_n$, and we consider the rescaled LCP instance $(\diag{d}\mM,\diag{d}q)$. We set the parameter $\rho=O(n^2\tau)$, and 
use an IPA for \ref{eq:LCP} for the rescaled system. As noted by Ill\'es, E.-Nagy, and Terlaky \cite{illes2010polynomial}, the unified IPA from \cite{kojima1991unified} terminates with a solution to the LCP in $O((1+\rho)\sqrt{n} L)$ steps as long as all step directions used in the algorithm are consistent with the hypothesis $\mM\in \Pms(\rho)$. That is, either the IPA makes the desired progress in each step, or a vector $z\in\R^n$ is found that testifies $\hk(\mD\mM)>\rho$.

In the latter case, we terminate the IPA, and change our ellipsoid. We use the vector $z$ to define a separating hyperplane between the current centre $d$ and the set of rescalings $d'$ for which $\hk(\diag{d'}\mM)\le \rho$. If the IPA does not succeed, then in a polynomial number of times the volume of the ellipsoid drops below the volume of a ball of radius $1/4$, at which point we conclude that there is no rescaling such that  $\hk(\diag{d'}\mM)\le \tau$. If this happens, we double our guess $\tau$ and restart.

The algorithm described here is not practical as it in effect multiplies the time bounds of the ellipsoid and IPAs. Still, it conceptually improves the running time dependence from $\hk(\mM)$ to $\hks(\mM)$. 
Our approach is loosely inspired by the result of Dadush, Huiberts, Natura, and V\'egh \cite{dadush2023scaling} that improves the running time of IPAs for exact LP from the logarithm of the circuit imbalance measure of the constraint matrix to the optimized version of this measure. However, in contrast to our approach, \cite{dadush2023scaling} is able to efficiently find a suitable rescaling as a preconditioning, as well as develop a scaling invariant IPA. In contrast, we resort to the less efficient approach via the ellipsoid method.

\medskip

As already noted, the optimized handicap $\hks(\mM)$ can be arbitrarily better than $\hk(\mM)$. Our algorithm is polynomial whenever $\hks(\mM)$ is polynomially bounded. Our next example shows that this is not always the case. 
\begin{restatable}{proposition}{largekappahat}\label{lem:large-kappa-hat}
For any $\alpha\ge 3$, the matrix
\begin{equation}\label{eq:matrix-large-handicap}
\mM\coloneqq\begin{pmatrix}
1 & \alpha & -1 \\
-1 & 1 & \alpha \\
\alpha & -1 & 1
\end{pmatrix}
\end{equation}
is sufficient, with $\hks(\mM)\ge \frac{\alpha-3}{8}$.
\end{restatable}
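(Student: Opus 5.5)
The statement has two parts, and I would handle them separately: sufficiency via the $\Pm$-matrix property, and the lower bound on $\hks(\mM)$ via a cyclic case analysis over the rescaling entries.

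\emph{Sufficiency.} I would show that $\mM$ is a $\Pm$-matrix by computing all its principal minors.
\begin{itemize}
\item The $1\times 1$ minors are the diagonal entries, all equal to $1$.
\item The three $2\times 2$ principal minors each equal $1\cdot 1-\alpha\cdot(-1)=1+\alpha>0$. For the index pairs $\{1,2\}$, $\{2,3\}$ and $\{1,3\}$, the off-diagonal pairs are $(\alpha,-1)$, $(\alpha,-1)$ and $(-1,\alpha)$, respectively.
\item Expanding along the first row gives the determinant
\[
(1+\alpha)+\alpha(1+\alpha^2)-(1-\alpha)=\alpha^3+3\alpha>0 .
\]
\end{itemize}
Thus $\mM\in\Pm$. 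By Lemma~\ref{lem:p-reverse} and the remark following it, $\mM$ is then sufficient.

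\emph{Lower bound on $\hks(\mM)$.} Fix any $\mD=\diag{d}\in\Dp$. I would use two observations.
\begin{itemize}
\item By Lemma~\ref{lem:rescale-preserve}, row rescaling does not change the sign classes $\cI^\pm_\mM(x)$. Hence
\[
1+4\hk(\mD\mM,x)=\frac{-\sum_{i\in\cI^-}d_ix_i(\mM x)_i}{\sum_{i\in\cI^+}d_ix_i(\mM x)_i}
\]
whenever $x^\top\mD\mM x<0$ and $\cI^+\neq\emptyset$.
\item Going around the cycle $1\to2\to3\to1$, at least one of $d_1\ge d_2$, $d_2\ge d_3$, $d_3\ge d_1$ must hold; otherwise $d_1<d_2<d_3<d_1$.
\end{itemize}

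In the case $d_1\ge d_2$, take $x=(1,-1,0)$. Then $\mM x=(1-\alpha,-2,\alpha+1)$, so
\[
x\circ(\mM x)=(1-\alpha,\,2,\,0).
\]
The rescaled quantities are $d_1(1-\alpha)<0$ and $2d_2>0$. Therefore
\[
1+4\hk(\mD\mM,x)\ge \frac{d_1(\alpha-1)}{2d_2}\ge\frac{\alpha-1}{2},
\]
which gives $\hk(\mD\mM)\ge(\alpha-3)/8$. This uses $\alpha\ge3$, so that $\alpha-1>0$ and the bound is nonnegative.

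The other two cases follow by the cyclic symmetry of $\mM$: it is invariant under simultaneously shifting row and column indices $1\to2\to3\to1$. For $d_2\ge d_3$ use $x=(0,1,-1)$, and for $d_3\ge d_1$ use $x=(-1,0,1)$. Taking the infimum over $\mD$ gives $\hks(\mM)\ge(\alpha-3)/8$.

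The computations are routine. The only point needing care is confirming the signs of $x\circ(\mM x)$ in each of the three cases, and checking that the cyclic symmetry really maps each case onto the first.
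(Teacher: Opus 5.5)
Your proof is correct and is essentially the paper's argument. Sufficiency comes from the $\Pm$-matrix property, and the cyclic case split $d_1\ge d_2$, $d_2\ge d_3$, $d_3\ge d_1$ with the test vectors $(1,-1,0)$, $(0,1,-1)$, $(-1,0,1)$ gives $1+4\hk(\mD\mM)\ge(\alpha-1)/2$. Beyond the paper, you compute the principal minors explicitly rather than just asserting their positivity, and you omit the paper's unnecessary normalization $d_i\ge 1$. Your inequality also remains valid in the boundary case $\alpha=3$, $d_1=d_2$, where $x^\top\mD\mM x=0$ and the ratio equals $1$.
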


\paragraph{Organization} The rest of the paper is structured as follows. Section~\ref{sec:prelim} introduces some basic concepts and tools on encoding lengths and on sufficient matrices. Section~\ref{sec:handicap-sec-new} shows Theorem~\ref{thm:kappa-bound}, the polynomial bound on $\hk(\mM)$. The characterization developed in this section is then used in Section~\ref{sec:valiaho} to derive a new proof of V\"aliaho's theorem. Section~\ref{sec:near-optimal} introduces the optimized handicap number and studies its properties, giving in particular a polynomial bound on near-optimal rescaling coefficients. Section~\ref{sec:alg} shows Theorem~\ref{thm:LCP-main}, by presenting an LCP algorithm whose running time is polynomially bounded in the input size and the optimized handicap number. Section~\ref{sec:conclusion} concludes with some open questions.